\newtheorem{theorem}{Theorem}[section]
\newtheorem{corollary}[theorem]{Corollary}
\newtheorem{lemma}[theorem]{Lemma}
\newtheorem{proposition}[theorem]{Proposition}
\newtheorem{definition}[theorem]{Definition}
\numberwithin{equation}{section}
\newcommand{\Ker}{\operatorname{Ker}}
\def\P{\mathbb{P}}
\def\Z{\mathbb{Z}}
\def\Q{\mathbb{Q}}
\def\C{\mathbb{C}}
\newcommand{\ra}{\rightarrow}
\date{\today}
\title{Frobenius distribution for pairs of elliptic curves and exceptional isogenies}
\author{Fran\c cois Charles}
\address{Fran\c cois Charles, Department of Mathematics, Massachusetts Institute of Technology, Cambridge, MA 02139-4307, USA; and Laboratoire de mathématiques d'Orsay, UMR 8628 du CNRS, Universit{é}
Paris-Sud,
B{â}timent 425, 91405 Orsay cedex, France}
\email{francois.charles@math.u-psud.fr}
\begin{document}

\begin{abstract}
Let $E$ and $E'$ be two elliptic curves over a number field. We prove that the reductions of $E$ and $E'$ at a finite place $\mathfrak p$ are geometrically isogenous for infinitely many $\mathfrak p$, and draw consequences for the existence of supersingular primes. This result is an analogue for distributions of Frobenius traces of known results on the density of Noether-Lefschetz loci in Hodge theory. The proof relies on dynamical properties of the Hecke correspondences on the modular curve.
\end{abstract}

\maketitle

\section{Introduction}

The goal of this paper is to investigate the behavior, in a particular example, of the distribution of the conjugacy classes Frobenius morphisms for a Galois representation of weight $2$ with Hodge-Tate weight $h^{2,0}=1$. Since we are primarily interested in geometric applications, we will state our main result in the context of isogenies of elliptic curves. 

Let us start by recalling a Hodge-theoretic theorem due independently to Green \cite{Voisin2002} and Oguiso \cite{Oguiso03} -- see also \cite{BKPSB}. If $H$ is a Hodge structure of weight $2$, let $\rho(H)$ be the \emph{Picard number} of $H$, that is, the rank of the group of Hodge classes -- integral classes of type $(1,1)$ -- in $H$.

Let $\Delta$ be the unit disk in $\C$, and let $H$ be a non-trivial variation of Hodge structures of weight $2$ over $\Delta$ with Hodge number $h^{2, 0}=1$. Let $M$ be the minimal value of the integers $\rho(H_s)$ for $s$ in $\Delta$. Then the Noether-Lefschetz locus 
$$NL(H):=\{s\in\Delta, \, \rho(H_s)>M\}$$
is dense in $\Delta$. Note however that $\rho(H_s)=M$ if $s$ is very general in $\Delta$.

In this paper, we focus on an arithmetic analogue of the theorem above, where $\Delta$ is replaced by a suitable open subset of the spectrum of the ring of integers in a number field $k$. In that setting, variations of Hodge structures are replaced by representations of the absolute Galois group $G_k$ of $k$, and the Noether-Lefschetz locus is replaced by the set of primes at which some power of the Frobenius has invariants which do not have a finite orbit under the whole group $G_k$. 

Our main theorem is the following. Say that two elliptic curves over a field $k$ are \emph{geometrically isogenous} if they are isogenous over an algebraic closure of $k$. 

\begin{theorem}\label{theorem:main}
Let $k$ be a number field, and let $E$ and $E'$ be two elliptic curves over $k$. Then there exist infinitely many finite places $\mathfrak p$ of $k$ such that the reductions $E_{\mathfrak p}$ and $E'_{\mathfrak p}$ of $E$ and $E'$ modulo $\mathfrak p$ are geometrically isogenous. 
\end{theorem}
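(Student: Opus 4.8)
The plan is to translate the statement into a question about the modular curve $X(1)=X(1)_{\Z}$ (or a suitable level structure) and to exploit the dynamics of Hecke correspondences. Set $X$ to be the coarse moduli scheme parametrizing pairs $(A,A')$ of elliptic curves, i.e. an integral model of $X(1)\times X(1)$, and let the point $P\in X(k)$ correspond to $(E,E')$. The locus of pairs of isogenous curves is the union $\Gamma=\bigcup_{N\geq 1} T_N$ of the images of the Hecke correspondences $T_N\subset X(1)\times X(1)$: two elliptic curves over an algebraically closed field are isogenous precisely when the corresponding point lies on some $T_N$. Thus the theorem asks for infinitely many places $\mathfrak p$ at which the reduction $P_{\mathfrak p}\in X(\overline{\F_{\mathfrak p}})$ lands on $\Gamma$. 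The key structural input is that, over $\overline{\F_p}$, the Hecke correspondences behave very differently from over $\C$: the union $\bigcup_N T_N$ is not Zariski dense in any fiber in a way that would make the problem trivial, but a single supersingular point is isogenous to \emph{every} other supersingular point, and more generally the dynamics of $T_\ell$ acting on $X(1)_{\overline{\F_p}}$ are eventually surjective onto large sets.

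The key steps, in order, are as follows. First, reduce to a purely local/dynamical statement: it suffices to show that for infinitely many $p$ there is a place $\mathfrak p$ above $p$ and some $N$ with $P_{\mathfrak p}\in T_N(\overline{\F_{\mathfrak p}})$. Second, analyze the orbit of $P_{\mathfrak p}$ under iterated Hecke correspondences $T_\ell$ for a fixed small prime $\ell$ different from the residue characteristic: the forward Hecke orbit of $j(E_{\mathfrak p})$ in $X(1)(\overline{\F_p})$ consists of $j$-invariants of curves $\ell^n$-isogenous to $E_{\mathfrak p}$, and one wants this orbit to meet the forward Hecke orbit of $j(E'_{\mathfrak p})$. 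Over a finite field, the supersingular locus is a finite nonempty set which is a single orbit-closed set under all Hecke correspondences (any two supersingular curves over $\overline{\F_p}$ are isogenous), so if one can force both $E_{\mathfrak p}$ and $E'_{\mathfrak p}$ to be supersingular for the same $\mathfrak p$, one is done. Third, and this is the heart: use an equidistribution / counting argument — in the style of the author's abstract reference to ``dynamical properties of Hecke correspondences'' — to show that the set of primes where $E_{\mathfrak p}$ is supersingular, while of density zero, interacts with the Hecke dynamics of $E'$ so that infinitely many common isogeny-primes exist even when one cannot make both supersingular simultaneously; here one would estimate the number of $\mathfrak p$ of norm $\leq B$ for which $P_{\mathfrak p}\in\bigcup_{N\leq f(B)} T_N$ from below, using the fact that $T_N$ has degree growing polynomially in $N$ and that the reductions $P_{\mathfrak p}$ are ``generic enough'' (by a Chebotarev-type input applied to the Galois action on torsion, ensuring $E$ and $E'$ are not already isogenous over $k$ in a way that obstructs the argument, and otherwise the statement is immediate).

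Concretely, I would first dispose of the easy case: if $E$ and $E'$ are geometrically isogenous over $k$, the conclusion holds for all but finitely many $\mathfrak p$, so assume they are not. Then pass to a finite extension and add level structure so that $X$ becomes a fine moduli scheme (a product of modular curves $X(n)$), smooth and proper over $\Spec \Oa_k[1/n]$, with a section $\sigma$ given by $(E,E')$. The correspondences $T_N$ spread out to correspondences on this model over the same base. The goal becomes: for infinitely many $\mathfrak p$, $\sigma(\mathfrak p)$ lies on the reduction mod $\mathfrak p$ of $\bigcup_N T_N$. I would attack this by fixing an auxiliary prime $\ell$ and studying, for each $p\neq\ell$, the finite dynamical system given by $T_\ell$ on $X(n)(\overline{\F_p})\times X(n)(\overline{\F_p})$: the $T_\ell$-orbit of $\sigma(\mathfrak p)$ has size bounded in terms of the conductor of the $\ell$-adic Tate module, and the isogeny locus is $T_\ell$-stable, so the problem reduces to a statement about when two such finite orbits collide. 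The crucial estimate — and the main obstacle — is to prove this collision happens for infinitely many $p$; I expect this to require a quantitative count of supersingular reductions (Deuring, Elkies-type bounds on the number of supersingular primes being $\gg x/\log x$ conditionally, but unconditionally $\gg \log\log x$ via the isogeny volcano structure) combined with an argument that the Hecke correspondence, iterated $O(\log p)$ times, sweeps out a set of $j$-invariants large enough to necessarily contain $j(E'_{\mathfrak p})$ for a positive-density — or at least infinite — set of $\mathfrak p$. Making this sweeping-out precise, i.e. controlling the image of high iterates of $T_\ell$ on $X(1)_{\overline{\F_p}}$ and intersecting it with a single moving point, is where the real work lies; everything else is moduli-theoretic bookkeeping and a Chebotarev argument to exclude degenerate Galois images.
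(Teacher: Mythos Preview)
Your proposal has a genuine gap: the entire argument hinges on the ``sweeping-out'' step, and you yourself flag that ``making this sweeping-out precise\ldots is where the real work lies'' without offering any mechanism for it. The difficulty is serious. Over $\overline{\F_p}$, the $T_\ell$-orbit of an ordinary $j$-invariant stays inside a single isogeny class, which has size roughly the class number of the CM order---there is no reason this orbit should ever meet $j(E'_{\mathfrak p})$ unless the two curves already share their CM field at $\mathfrak p$. Your suggestion to route through the supersingular locus runs into the problem that Elkies' theorem requires a real place of $k$, and even when it applies there is no reason the supersingular primes of $E$ and $E'$ should overlap; in fact Corollary~\ref{corollary:lang-trotter-weak} is deduced \emph{from} the main theorem, not used as input. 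The counting heuristic you sketch (degree of $T_N$ versus number of primes up to $B$) is essentially the Lang--Trotter heuristic the introduction mentions, and the paper explicitly says it seems very difficult to turn into a proof.

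The paper's actual approach is entirely different and bypasses finite-field dynamics. One argues by contradiction: if only finitely many $\mathfrak p$ work, then the Arakelov intersection number $\widehat{\deg}(Z\cdot T_{N*}Y)$ on $X(1)_{\mathcal O_k}$ is supported at a fixed finite set $S$ of places. But the height formula for Hecke correspondences (Cohen, Autissier) forces this intersection, corrected by archimedean terms, to grow like $e_N\log N$. The work is then to bound each \emph{local} contribution---at places of bad reduction, at the remaining finite places in $S$, and at the archimedean places---by $o(e_N\log N)$ for a positive-density set of $N$. These local bounds (Propositions~\ref{proposition:badred}, \ref{proposition:equidistribution}, \ref{proposition:worstapprox}) are proved by controlling how well points in the Hecke orbit $T_{N*}y$ can approximate a fixed point $z$, ultimately reducing to showing that very good approximations force $y$ to be close to a CM point of bounded discriminant, and such CM points repel one another. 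The contradiction comes from comparing the global growth $e_N\log N$ against the sum of these small local terms. Nothing in your outline touches this Arakelov/height structure, which is the engine of the proof.
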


If $k$ is the function field of a curve over a finite field and $E, E'$ are both non-isotrivial elliptic curves, the corresponding result is proved in \cite[Proposition 7.3]{ChaiOort06} -- the situation there is quite different due to the existence of the Frobenius morphism on the base.

Using Faltings' isogeny theorem \cite{Faltings83} and the Cebotarev density theorem, it is possible to show that if $E$ and $E'$ are not themselves geometrically isogenous, then -- after replacing $k$ by a finite extension -- the density of such primes $\mathfrak p$ is zero.

The theorem above is an arithmetic counterpart of the Hodge-theoretic density statement we mentioned for the Galois representation $H^1(E_{\overline k}, \Z_\ell)\otimes H^1(E'_{\overline k}, \Z_\ell)$. Since we do not know of an analogue of the Hodge-theoretic argument in this setting, we offer a different heuristic for Theorem \ref{theorem:main}.

Assume for simplicity that $k$ is the field $\Q$ of rational numbers and that $E$ does not have complex multiplication. Then the Sato-Tate conjecture -- now a theorem proved in \cite{ClozelHarrisTaylor08,Taylor08,HarrisShepherBarronTaylor10} in case $E$ has at least one place of multiplicative reduction -- predicts that as $p$ varies among the prime numbers, the traces $T_p$ of the Frobenius at $p$ are roughly equidistributed between $-2\sqrt p$ and $2\sqrt p$. Assume that the same holds for the traces $T'_p$ associated to $E'$, and that $E$ and $E'$ are not geometrically isogenous. Then one might expect that the distributions of the $T_p$ and $T'_p$ are independent, so that the probability that $T_p$ is equal to $T'_p$ is of the order of $\frac{1}{\sqrt p}$. By Tate's isogeny theorem \cite{Tate66}, $T_p$ and $T'_p$ are equal if and only if the reductions of $E$ and $E'$ modulo $p$ are isogenous. Since the sum over all prime numbers $p$ of the $\frac{1}{\sqrt p}$ diverges, it might be expected that there exist infinitely many primes $p$ such that the reduction of $E$ and $E'$ modulo $p$ are isogenous. 

It seems very difficult to turn the heuristic we just described into a proof. While our proof of Theorem \ref{theorem:main} can likely be made effective, the lower bounds on the number of $\mathfrak p$ satisfying the conclusion is very far from the bounds that could be expected from the discussion above. 

The techniques of our paper are much easier than the ones used in the aforementioned proof of the Sato-Tate conjecture. However, we emphasize that Theorem \ref{theorem:main} does not entail any assumption on the base field nor on the existence of places of multiplicative reduction for $E$ or $E'$. 

\bigskip

Theorem \ref{theorem:main} has consequences for the reduction modulo different primes of a single elliptic curve. 

\begin{corollary}\label{corollary:lang-trotter-weak}
Let $k$ be a number field, and let $E$ be an elliptic curve over $k$. Then at least one of the following statements holds :
\begin{enumerate}
\item There exist infinitely many places $\mathfrak p$ of $k$ such that $E$ has supersingular reduction at $\mathfrak p$. 
\item For any imaginary quadratic number field $K$, there exist infinitely many places $\mathfrak p$ of $k$ such that the reduction of $E$ modulo $\mathfrak p$ has complex multiplication by $K$. 
\end{enumerate}
\end{corollary}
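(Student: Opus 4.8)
The plan is to derive the corollary from Theorem \ref{theorem:main} by a suitable choice of auxiliary elliptic curve $E'$. Suppose statement (1) fails, so that $E$ has supersingular reduction at only finitely many places $\mathfrak p$ of $k$; I must then produce, for an arbitrary imaginary quadratic field $K$, infinitely many places at which $E_{\mathfrak p}$ has complex multiplication by $K$. Fix an elliptic curve $E'$ over a number field having complex multiplication by (an order in) $K$; after replacing $k$ by a finite extension containing the field of definition of $E'$ and its CM, we may assume $E'$ is defined over $k$ and that $\mathrm{End}_{\overline k}(E') \otimes \Q = K$. Enlarging $k$ does no harm: a place of the larger field lying over a place of the original $k$ at which the reduction acquires CM by $K$ forces the same property downstairs, up to isogeny, since geometric isogeny and the CM field are insensitive to the residue field extension. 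Now apply Theorem \ref{theorem:main} to the pair $(E, E')$: there are infinitely many finite places $\mathfrak p$ of $k$ at which $E_{\mathfrak p}$ and $E'_{\mathfrak p}$ are geometrically isogenous.

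It remains to analyze these places. Discard the finitely many places of bad reduction for $E$ or $E'$, and the finitely many places where $E$ is supersingular (using the failure of (1)). At each of the remaining infinitely many places $\mathfrak p$, the curve $E_{\mathfrak p}$ is ordinary, and it is geometrically isogenous to $E'_{\mathfrak p}$. Since $E'$ has CM by $K$ over $\overline k$, its reduction $E'_{\mathfrak p}$ has CM, and — again because $E_{\mathfrak p}$ is ordinary, hence $E'_{\mathfrak p}$ is ordinary too — the endomorphism algebra $\mathrm{End}_{\overline{\F}_{\mathfrak p}}(E'_{\mathfrak p}) \otimes \Q$ is an imaginary quadratic field, namely the reduction-compatible image of $K$, so it equals $K$. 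Geometrically isogenous ordinary elliptic curves over $\overline{\F}_{\mathfrak p}$ have the same endomorphism algebra, so $\mathrm{End}_{\overline{\F}_{\mathfrak p}}(E_{\mathfrak p}) \otimes \Q \cong K$ as well. Thus $E_{\mathfrak p}$ has complex multiplication by $K$ for all these infinitely many $\mathfrak p$, which is statement (2).

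The one genuinely delicate point is the dichotomy itself and the role of the word "supersingular": the argument above only controls places where $E_{\mathfrak p}$ is ordinary, and at an ordinary place a geometric isogeny to a CM curve transports exactly the CM field. If $E_{\mathfrak p}$ is supersingular, its geometric endomorphism algebra is a quaternion algebra and "CM by $K$" is not even the right notion; this is precisely why supersingular places must be thrown into case (1). So the structure of the proof is forced: either infinitely many supersingular places occur (case 1), or all but finitely many of the places furnished by Theorem \ref{theorem:main} are ordinary, and at those we get CM by $K$ (case 2). A secondary technical matter is checking that the base change to a field of definition of $E'$ and $K$, together with the passage between $\overline k$-isogenies and $\overline{\F}_{\mathfrak p}$-isogenies, really preserves "has CM by $K$"; this is standard (reduction of CM elliptic curves at primes of good, ordinary reduction is injective on endomorphism rings up to isogeny, by Deuring), but it should be spelled out.
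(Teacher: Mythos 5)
Your proof is correct and follows the same route as the paper's: assume (1) fails, pick an auxiliary $E'/k$ with CM by $K$ (after a harmless base extension), apply Theorem \ref{theorem:main}, and observe that at the cofinitely many resulting places of ordinary reduction the geometric isogeny transports the CM field $K$ from $E'_{\mathfrak p}$ to $E_{\mathfrak p}$. The extra care you take about base change and Deuring's reduction theorem is simply spelling out what the paper leaves implicit.
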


Recall that an elliptic curve over a finite field either has complex multiplication by a quadratic imaginary field or is supersingular. A folklore expectation, related to the Lang-Trotter conjecture \cite{LangTrotter76}, is that both statements of the corollary above should hold unless $E$ has complex multiplication. The existence of infinitely many supersingular primes has been addressed by Elkies \cite{Elkies89}, who managed to prove that statement (1) is always true when $k$ admits a real place. Very little seems to be known about (2) or the general case of (1).

\bigskip

Our proof of Theorem \ref{theorem:main} relies on the Arakelov geometry of the moduli space of elliptic curves. The basic strategy is very simple: given a positive integer $N$, the set of finite places $\mathfrak p$ of $K$ such that the reductions $E_{\mathfrak p}$ and $E'_{\mathfrak p}$ of $E$ and $E'$ modulo $\mathfrak p$ are related -- after some base field extension -- by a cyclic isogeny of degree $N$ can be expressed as the image in $\mathrm{Spec}\,\Z$ of the intersection of an arithmetic curve in $\P^1_{\Z}\times \P^1_{\Z}$ with the graph of a Hecke correspondence $T_N$. We need to show that the union of these places over all possible $N$ is infinite. 

Instead of the set-theoretic intersection, we can consider the intersection number in the sense of Arakelov geometry. Knowing the height of the modular curves by work of Cohen \cite{Cohen84} and Autissier \cite{Autissier03} makes it possible to show that this intersection number grows like $N\log N$ -- assuming $N$ has few prime factors for simplicity. This reduces the proof of the theorem to bounding the local intersection numbers at all places of $k$ -- finite or infinite. This turns out to be, in various forms, a manifestation of the ergodicity of Hecke correspondences as proved in \cite{ClozelOhUllmo01}, though it does not seem to follow directly from it.

As this sketch might suggest, our method of proof is related to the techniques of Gross and Zagier in their celebrated result \cite{GrossZagier86}. Instead of computing intersections of Hecke orbits for Heegner points, we are giving estimates for similar intersection numbers at arbitrary points of the modular curve. Our task is made much simpler technically by the fact that we do not need to prove exact formulas for intersection numbers on modular curves.

As will be apparent in the paper, our proof of Theorem \ref{theorem:main}generalizes to similar statements regarding to the behavior of Hecke correspondences on Shimura curves.

\bigskip

Section 2 is devoted to setting up the notations and proving some basic -- and certainly well-known -- lemmas. In section 3, we show how Corollary \ref{corollary:lang-trotter-weak} can be deduced from the main theorem and reduce the main theorem to local statements. The last two parts of the paper are devoted to the proof of these local statements. 

\bigskip

\noindent\textbf{Acknowledgements.} This paper has greatly benefited from numerous conversations with Emmanuel Ullmo, whom it is a great pleasure to thank. I would like to thank Ching-Li Chai for pointing out the reference \cite{ChaiOort06}.

\section{Notations and preliminary results}

\subsection{Notations}

Let $X(1)$ be the coarse moduli scheme of generalized elliptic curves. It is a 
smooth arithmetic surface over Spec$\,\Z$. The modular invariant $j$ provides 
an isomorphism 
$$j : X(1)\ra \mathbb P^1_{\Z}.$$

Let $\mathbb H$ be the Poincar\'e half-plane, and let $\overline{\mathbb H}$ be 
the union of $\mathbb H$ with the set of cusps $\Q\cup\{\infty\}$. There is a 
canonical isomorphism between the Riemann surfaces $X(1)_{\C}$ and 
$\overline{\mathbb H}/\Gamma(1)$. We will denote by $\tau$ the standard 
coordinate on $\mathbb H$.

Let $N$ be a positive integer, and let $X_0(N)$ be the Deligne-Rapoport 
compactification of the coarse moduli scheme which parametrizes cyclic 
isogenies of degree $N$ between elliptic curves. It is a normal arithmetic surface over 
Spec$\,\Z$.

The two tautological maps from $X_0(N)$ to $X(1)$ induce a self-correspondence 
$T_N$ of $X(1)$. The correspondence $T_N$ is an integral Cartier divisor in 
$X(1)\times_{\mathrm Spec\, \Z} X(1)$. It is called the Hecke correspondence of 
order $N$. Define $e_N=N\,\Pi_{p|N}(1+\frac{1}{p})$, where $p$ runs over the prime 
divisors of $N$. The Hecke correspondence $T_N$ has bidegree $(e_N, 
e_N)$.

\bigskip

Let $\mathcal M$ be the line bundle of modular forms of weight $12$ on $X(1)$. 
The modular form
$$\Delta(\tau)=(2\pi)^{12}q\Pi_{n\geq 1}(1-q^n),$$
with $q=e^{2i\pi\tau}$ induces a global section of $\mathcal M$. It has a zero 
of order $1$ at the cusp $j^{-1}(\infty)$ and does not vanish anywhere else. As 
a consequence, there is a canonical isomorphism $j^*\mathcal 
O(1)\stackrel{\sim}{\ra} \mathcal M$.

If $t\in\C$, the modular form $(t-j)\Delta$ induces a global section of 
$\mathcal M_{\C}$ that has a zero of order $1$ at $j^{-1}(t)$ and does not 
vanish anywhere else. In general, if $Y$ is an horizontal divisor of relative degree $d$ on $X(1)$ with $Y_\C=\sum_i y_i$, then the modular form $\Pi_i (y_i-j)\Delta$ corresponds to the section of $\mathcal M^{\otimes d}$ induced by $Y$.

The Petersson metric on modular forms induces a hermitian metric $||.||$ on $\mathcal M_{\C}$ 
such that $$||\Delta||(\tau)=|\Delta(\tau)||Im(\tau)|^6.$$ This metric is $L^2_1$-singular along the cusp $j^{-1}(\infty)$. The line 
bundle $\mathcal M$ endowed with the Petersson metric on $\mathcal M_{\C}$ is a 
hermitian line bundle $\overline{\mathcal M}$ on $X(1)$.

\subsection{Intersection theory on modular curves}

We will use generalized Arakelov intersection theory for arithmetic surfaces as in \cite{Bost99}, section 5 of which contains the definitions and notations we are using. We also refer to \cite{Autissier03} for the specific case we are considering. The height function with respect to $\overline{\mathcal M}$  is denoted by $h_{\overline{\mathcal M}}$, and arithmetic degrees are denoted by $\widehat{\mathrm{deg}}$. 

The starting point of the proof is the formula giving the height of Hecke 
correspondences. The following is Theorem 3.2 of \cite{Autissier03}, and was also proved in \cite{Cohen84}.

\begin{theorem}\label{theorem:height-of-Hecke-correspondences}
Let $k$ be a number field with ring of integers $\mathcal O_k$. 
Let $Y$ be a horizontal one-dimensional 
integral subscheme of $X(1)_{\mathcal O_K}$ such that $Y_{\C}$ does not meet $j^{-1}(\infty)$. 
Let $d=[k(Y):k]$. Then, as $N$ goes to infinity, we have
\begin{equation}\label{intersection}
h_{\overline{\mathcal M}}(T_{N*}Y)\sim 6d[k:\Q]e_N\log(N)
\end{equation}
\end{theorem}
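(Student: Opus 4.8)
\emph{The plan is to} compute $h_{\overline{\mathcal M}}(T_{N*}Y)$ using the global section $\delta=q\prod_{n\geq1}(1-q^{n})=(2\pi)^{-12}\Delta$ of $\mathcal M$ over $\Z$. Its divisor is exactly the cusp $j^{-1}(\infty)$ with no vertical component (on each geometric fibre $X(1)\cong\mathbb P^{1}$ a section of $\mathcal M\cong\mathcal O(1)$ with a simple zero has no other zero), and its Petersson norm is $\|\delta\|(\tau)=|\delta(\tau)|\,(\mathrm{Im}\,\tau)^{6}$. Since $Y_{\C}$ avoids $j^{-1}(\infty)$, so does $(T_{N*}Y)_{\C}$, hence $\delta$ restricts to a nonzero rational section of $\overline{\mathcal M}$ on $T_{N*}Y$; expanding the arithmetic degree then gives
\begin{equation*}
h_{\overline{\mathcal M}}(T_{N*}Y)=\bigl\langle j^{-1}(\infty),\,T_{N*}Y\bigr\rangle_{\mathrm{fin}}-\sum_{\sigma\colon k\hookrightarrow\C}\ \sum_{x\in(T_{N*}Y)_{\sigma}}\log\|\delta\|_{\sigma}(x),
\end{equation*}
where the first term is the contribution of the finite places, i.e.\ the geometric intersection number of $T_{N*}Y$ with the cusp. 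I will show the archimedean sum produces the main term $6d[k:\Q]e_{N}\log N$ and the finite term is $O(e_{N})$.

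\emph{For the finite term}, write $T_{N*}Y=\pi_{2*}\pi_{1}^{*}Y$ for the two degeneracy maps $\pi_{1},\pi_{2}\colon X_{0}(N)\to X(1)$; these are finite and flat ($X_{0}(N)$ is normal, hence Cohen--Macaulay, over the regular surface $X(1)$), so applying the projection formula over each finite place gives $\langle j^{-1}(\infty),T_{N*}Y\rangle_{\mathrm{fin}}=\langle\pi_{1*}\pi_{2}^{*}\,j^{-1}(\infty),\,Y\rangle_{\mathrm{fin}}$. The cusp of $X(1)$ is fixed by the Hecke correspondence with multiplicity $e_{N}$ on every fibre over $\Spec\Z$, including those over primes dividing $N$ by the Deligne--Rapoport model of $X_{0}(N)$, so $\pi_{1*}\pi_{2}^{*}\,j^{-1}(\infty)=e_{N}\,j^{-1}(\infty)$ as cycles, and the finite term equals $e_{N}\,\langle j^{-1}(\infty),Y\rangle_{\mathrm{fin}}$. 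This is a fixed non-negative multiple of $e_{N}$ because $Y_{\C}$ is disjoint from the cusp; in particular it is $O(e_{N})=o(e_{N}\log N)$.

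\emph{For the archimedean term}, fix an embedding $\sigma$ and let $\tau_{1},\dots,\tau_{d}\in\mathbb H$ lie in the standard fundamental domain and represent the $d$ points of $Y_{\sigma}$. The points of $(T_{N*}Y)_{\sigma}$ above $\tau_{i}$ are the $e_{N}$ points $(a\tau_{i}+b)/c$ with $ac=N$, $0\leq b<c$, $\gcd(a,b,c)=1$. I would apply the exact identity, valid for all $\tau'\in\mathbb H$ by the $q$-expansion of $\delta$,
\begin{equation*}
-\log\|\delta\|(\tau')=2\pi\,\mathrm{Im}\,\tau'-\sum_{n\geq1}\log\bigl|1-e^{2\pi in\tau'}\bigr|-6\log\mathrm{Im}\,\tau' ,
\end{equation*}
to $\tau'=(a\tau_{i}+b)/c$, for which $\mathrm{Im}\,\tau'=(a/c)\,\mathrm{Im}\,\tau_{i}$, and split the sum over the $e_{N}$ triples into three pieces. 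The $-6\log\mathrm{Im}\,\tau'=-6\log(a/c)-6\log\mathrm{Im}\,\tau_{i}$ piece contributes $6\sum_{ac=N}\#\{b\}(\log c-\log a)-6e_{N}\log\mathrm{Im}\,\tau_{i}$, whose first summand is an elementary divisor sum asymptotic to $e_{N}\log N$ and whose second summand is $O(e_{N})$ since $\tau_{i}$ is fixed. The $2\pi\,\mathrm{Im}\,\tau'$ piece contributes $2\pi\,\mathrm{Im}\,\tau_{i}\sum_{ac=N}\#\{b\}(a/c)\leq 2\pi\,\mathrm{Im}\,\tau_{i}\,\sigma(N)=O(e_{N})$. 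For the $\sum_{n}\log|1-e^{2\pi in\tau'}|$ piece I would first carry out the sum over $b$ with $(a,c)$ fixed: using cyclotomic identities of the form $\prod_{b\bmod c}(1-x\zeta_{c}^{b})=1-x^{c}$, the product $\prod_{b}\prod_{n}(1-e^{2\pi in(a\tau_{i}+b)/c})$ collapses to a product of factors $(1-e^{2\pi ima'\tau_{i}})^{g}$ with $a'\mid a$, $g\mid c$, $m\geq1$; as $\tau_{i}$ lies in the fundamental domain, $|e^{2\pi i\tau_{i}}|\leq e^{-\pi\sqrt{3}}$ is bounded away from $1$, so the logarithm of the modulus of this collapsed product is $O(\sigma(c))$, and summing over $ac=N$ gives $O\!\bigl(\sum_{c\mid N}\sigma(c)\bigr)=o(e_{N}\log N)$. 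Adding the three pieces, summing over $i=1,\dots,d$ and over the $[k:\Q]$ embeddings $\sigma$, and combining with the finite term yields the asserted asymptotic.

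\emph{The main obstacle} is not in any of these manipulations but in making the error terms uniform in $N$ --- not only for $N$ with boundedly many prime factors, as in the heuristic of the introduction. Concretely it comes down to the divisor-sum asymptotic $\sum_{ac=N}\#\{b:0\leq b<c,\ \gcd(a,b,c)=1\}\,\log(c/a)\sim e_{N}\log N$, to the uniform bounds $\sigma(N)=O(e_{N})$ and $\sum_{c\mid N}\sigma(c)=o(e_{N}\log N)$, and to carrying out the $b$-collapse for general (not squarefree) $c$; each of these is elementary but requires some bookkeeping. One must also note that only boundedly many of the $\tau_{i}$ --- those with extra automorphisms or above the branch locus of $\pi_{1}$ --- need special treatment, at finitely many primes, and only by bounded amounts, so the leading term is unaffected.
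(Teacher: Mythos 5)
The paper does not prove Theorem \ref{theorem:height-of-Hecke-correspondences}: it is quoted verbatim from Autissier \cite{Autissier03} (Th\'eor\`eme 3.2), with the earlier reference to Cohen \cite{Cohen84}. There is therefore no in-paper proof to compare your argument against; what follows assesses the argument on its own terms.

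Your sketch is essentially the standard approach and looks correct in outline. The decomposition using the integral section $\delta=q\prod(1-q^n)$, whose divisor on $X(1)_{\mathcal O_k}\cong\mathbb P^1_{\mathcal O_k}$ is exactly the cusp with no vertical part, is the right starting point. The finite-place reduction is clean: normality of $X_0(N)$ in dimension $2$ gives Cohen--Macaulayness, hence miracle flatness of the degeneracy maps over the regular $X(1)$, and since all cusps of the Deligne--Rapoport model map to $j^{-1}(\infty)$ under both $\pi_1$ and $\pi_2$, the horizontal cycle $\pi_{1*}\pi_2^*\,j^{-1}(\infty)$ is supported on the irreducible divisor $j^{-1}(\infty)$ and hence equals $e_N\,j^{-1}(\infty)$ by the generic-fibre degree; the finite term is then $e_N\langle j^{-1}(\infty),Y\rangle_{\mathrm{fin}}=O(e_N)$. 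On the archimedean side, the decomposition of $-\log\|\delta\|$ into the three pieces is right; the main term comes from the $-6\log\mathrm{Im}$ piece, writing $\log(c/a)=\log N-2\log a$ so that $6\sum_{ac=N}\#\{b\}\log(c/a)=6e_N\log N-12\sum_{ac=N}\#\{b\}\log a$, and the latter is $O(e_N\log\log N)=o(e_N\log N)$. For the second piece, your bound $\sum_{ac=N}\#\{b\}(a/c)\le\sigma(N)$ is fine, and the observation $\sigma(N)\le\zeta(2)\,e_N$ (since $\sigma(N)/e_N\le\prod_{p|N}(1-p^{-2})^{-1}$) makes it $O(e_N)$. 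The third piece does collapse via the cyclotomic identity $\prod_{b\bmod c}(1-x\zeta_c^{nb})=(1-x^{c/\gcd(n,c)})^{\gcd(n,c)}$; carrying the $\gcd(a,b,c)=1$ constraint through M\"obius inversion over divisors of $\gcd(a,c)$ gives a bound $O(\sigma(c))$ per pair $(a,c)$ as you say, and $\sum_{c|N}\sigma(c)=O\bigl(e_N(\log\log N)^{O(1)}\bigr)=o(e_N\log N)$ by multiplicativity and Mertens-type bounds. So the places you flag as needing bookkeeping are indeed where all the work is, but none of them hides a genuine obstruction. One small remark: the final sentence about $\tau_i$ with extra automorphisms is unnecessary --- you are computing heights on the coarse space $X(1)\cong\mathbb P^1$ with $\mathcal M\cong\mathcal O(1)$, so elliptic points play no special role here.
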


The estimate above can be rephrased in terms of intersections of divisors on $X(1)_{\mathcal O_K}$. Let $\overline k$ be an algebraic closure of $k$.
Let $P=\sum_i n_iP_i$ be a zero-cycle on $X(1)_{\mathcal O_K}$, where the $P_i$ are closed points. The arithmetic degree of $P$ is defined as
$$\widehat{\mathrm{deg}}(P)=\sum_i n_i \log N(P_i),$$
where $N(P_i)$ is the cardinality of the residue field of $P_i$. Let $Y$ and $Z$ be two divisors in $X(1)$. The arithmetic degree $\widehat{\mathrm{deg}}(Y.Z)$ is defined as the degree of the intersection cycle $Y.Z$.

Finally, let $\sigma$ be an embedding of $k$ into $\C$. If $Z$ is any horizontal divisor on $X(1)_{\mathcal O_K}$, write $Z_{\C}=\sum_{i} n_i Q_i$ with $Q_i\in X(1)(\C)$. Assume that $j(Q_i)\neq\infty$ for all $i$, and let $z_i=j(Q_i)\in \C\subset \mathbb P^1(\C)$. Let $d'$ be the sum of the $n_i$. We denote by $s^{\sigma}_{Z}$ the global section of $\mathcal M_\C^{\otimes d'}$ such that 
$$s^{\sigma}_{Z}(\tau)=\Pi_i(z_i-j(\tau))\Delta(\tau).$$
Then the $s^{\sigma}_{Z}$, as $\sigma$ varies through all embeddings of $k$ into $\C$, extend to a section $s_{Z}$ of $\mathcal M^{\otimes d'}$ over $X(1)_{\mathcal O_K}$.

If $Y$ is any horizontal divisor on $X(1)$ with $Y_\C=\sum_j m_j P_j$, write 
$$s^{\sigma}_{Z}(Y)=\Pi_j s^{\sigma}_Y(P_j)^{m_j}.$$

Let $\sigma_1, \ldots, \sigma_{r_1}$ be the real embeddings of $k$, and let $\sigma_{r_1+1}, \overline{\sigma_{r_1+1}}, \ldots, \sigma_{r_1+r_2}, \overline{\sigma_{r_1+r_2}}$ be the complex embeddings of $k$. We extend the $\sigma_i$ to embeddings $\overline k\ra \C$. As usual, set $\varepsilon_i=1$ if $1\leq i\leq r_1$ and $\varepsilon_i=2$ if $r_1+1\leq i\leq r_1+r_2$. Using the sections $s^{\sigma}_{Z}$ to compute heights with respect to $\mathcal M$, Theorem \ref{theorem:height-of-Hecke-correspondences} gives the following.

\begin{corollary}\label{corollary:global-estimate}
Let $Y$ and $Z$ be two horizontal divisors on $X(1)_{\mathcal O_K}$ of relative degree $d$ and $d'$ respectively. Assume that $Y$ is effective and irreducible, and that for any positive integer $N$, the divisors $T_{N*}Y$ and $Z$ do not have any common component. Then, as $N$ goes to infinity, we have 
$$\widehat{\mathrm{deg}}(Z.T_{N*}Y)-\sum_{i=1}^{r_1+r_2}\varepsilon_i\log ||s^{\sigma_i}_{Z}(T_{N*}Y)||\sim 6dd'[k:\Q]e_N\log(N).$$
\end{corollary}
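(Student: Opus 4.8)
The corollary is essentially a bookkeeping translation of Theorem \ref{theorem:height-of-Hecke-correspondences} into the language of intersection numbers on the arithmetic surface $X(1)_{\mathcal O_K}$, using the explicit sections $s^\sigma_Z$ attached to a horizontal divisor. The plan is to unwind the definition of the height $h_{\overline{\mathcal M}}$ as an arithmetic intersection number against $\overline{\mathcal M}$, then use the canonical identification $j^*\mathcal O(1)\xrightarrow{\sim}\mathcal M$ together with the section $s_Z$ of $\mathcal M^{\otimes d'}$ to rewrite that intersection number as $\widehat{\mathrm{deg}}(Z.T_{N*}Y)$ corrected by the archimedean contributions $-\sum_i \varepsilon_i\log\|s^{\sigma_i}_Z(T_{N*}Y)\|$.

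More precisely, I would proceed as follows. First, recall from \cite{Bost99} that for a hermitian line bundle $\overline{\mathcal M}$ on $X(1)_{\mathcal O_K}$ and a horizontal cycle $W$ of relative degree $e$, the height $h_{\overline{\mathcal M}}(W)$ is computed as $\widehat{\mathrm{deg}}(\widehat{c}_1(\overline{\mathcal M})^{\otimes e}\cdot W)$, and that if $s$ is a rational section of $\mathcal M^{\otimes e}$ whose divisor meets $W$ properly, then this equals $\widehat{\mathrm{deg}}(\mathrm{div}(s).W) - \sum_{i}\varepsilon_i\log\|s\|(W_{\sigma_i})$, where the finite part is the geometric intersection of $\mathrm{div}(s)$ with $W$ weighted by residue-field degrees. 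Apply this with $W = T_{N*}Y$, which has relative degree $d\cdot e_N$, and with $s = s_Z$ restricted suitably — more carefully, one should observe that $s_Z$ is a section of $\mathcal M^{\otimes d'}$ and $h_{\overline{\mathcal M}}(T_{N*}Y)$ involves $\mathcal M$ to the first power on each component, so the correct statement is that $d'\cdot h_{\overline{\mathcal M}}(T_{N*}Y)$ equals the intersection of $\mathrm{div}(s_Z)$ with $T_{N*}Y$ corrected by archimedean terms. The key point is that $\mathrm{div}(s_Z)$, by the discussion preceding the corollary (the modular form $\Pi_i(z_i-j)\Delta$ cuts out exactly $Z$ away from the cusp, with multiplicity), equals $Z$ as a horizontal divisor, possibly plus a multiple of the cusp $j^{-1}(\infty)$ coming from the $\Delta$ factor; since by hypothesis $Z$ and $T_{N*}Y$ share no component, and one can arrange (or separately account for) the cusp contribution, the finite intersection term is $\widehat{\mathrm{deg}}(Z.T_{N*}Y)$ up to the controlled cusp correction. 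Finally, substituting the asymptotic $h_{\overline{\mathcal M}}(T_{N*}Y)\sim 6d[k:\Q]e_N\log N$ from Theorem \ref{theorem:height-of-Hecke-correspondences} and multiplying by $d'$ yields the stated asymptotic $6dd'[k:\Q]e_N\log N$, the error terms from the cusp and from passing between $\sum_i\varepsilon_i\log\|s^{\sigma_i}_Z(\cdot)\|$ and the normalized metric being $O(e_N)$, hence negligible against $e_N\log N$.

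The main obstacle is not conceptual but notational: one must be careful about (a) the exact normalization of the Arakelov height and the archimedean metric (the factor $\varepsilon_i$, and whether $h_{\overline{\mathcal M}}$ is normalized by $[k:\Q]$ or not — this is why the clean statement carries $[k:\Q]$), (b) the $L^2_1$-singular behavior of the Petersson metric along the cusp $j^{-1}(\infty)$, which is why the hypothesis $Y_\C$ does not meet $j^{-1}(\infty)$ appears in Theorem \ref{theorem:height-of-Hecke-correspondences} and must be shown to transfer to $T_{N*}Y$ (the image of $Y$ under the Hecke correspondence stays away from the cusp since $Y$ does), and (c) keeping track of the relative degrees $d$, $d'$, $e_N$ so that the leading coefficient comes out as $6dd'$. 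Once these normalizations are fixed, the proof is a one-paragraph computation: write $h_{\overline{\mathcal M}}$ as an arithmetic intersection, split into finite and infinite places, identify the finite part with $\widehat{\mathrm{deg}}(Z.T_{N*}Y)$ via the section $s_Z$, and invoke Theorem \ref{theorem:height-of-Hecke-correspondences}.
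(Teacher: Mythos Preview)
Your proposal is correct and follows exactly the approach the paper indicates: the paper's entire ``proof'' is the sentence ``Using the sections $s^{\sigma}_{Z}$ to compute heights with respect to $\mathcal M$, Theorem \ref{theorem:height-of-Hecke-correspondences} gives the following,'' and your plan simply spells this out---write $d'\,h_{\overline{\mathcal M}}(T_{N*}Y)$ as an arithmetic intersection against $\mathrm{div}(s_Z)=Z$ plus the archimedean terms $-\sum_i\varepsilon_i\log\|s^{\sigma_i}_Z(T_{N*}Y)\|$, then invoke the theorem. Your caution about a possible cusp contribution in $\mathrm{div}(s_Z)$ is unnecessary (the zero of $\Delta$ at $\infty$ exactly cancels the pole of $\prod_i(z_i-j)$ there, so $\mathrm{div}(s_Z)=Z$ on the nose), but otherwise the bookkeeping you outline is precisely what is needed.
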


\section{Local statements and proof of the main results}

In this section, we reduce the proofs of Theorem \ref{theorem:main} and Corollary \ref{corollary:lang-trotter-weak} to local statements which provide estimates for the terms appearing in Corollary \ref{corollary:global-estimate}. The proof of these local estimates will be the core of the paper. 

Let us briefly explain the motivation for the estimates below. We will prove Theorem \ref{theorem:main} by showing that, for suitable large $N$, each individual local term in Corollary \ref{corollary:global-estimate} is negligible before the right-hand-side, which is of the order of $e_N\log(N)$. Let us consider the archimedean term as an example. Let $z$ and $y$ be two complex numbers, that we consider as complex points of $X(1)$. If $N$ is an integer, write the \emph{Hecke orbit} of $y$, $T_{N*}y$, as 
$$T_{N*}y=j(\tau_1)+\ldots+j(\tau_{e_N})$$
where the $\tau_i$ belong to the Poincar\'e upper half-plane $\mathbb H$. We are interested in comparing the quantity
\begin{equation}\label{equation:basic-local-term}
\sum_{i=1}^{e_N} \log(|z-j(\tau_i)|\,||\Delta(\tau_i)||)
\end{equation}
to $e_N\log(N)$ as $N$ goes to infinity.

Equidistribution of Hecke points as proved in \cite{ClozelOhUllmo01} suggest that the sum above should be compared to the integral 
$$e_N\int_{\Gamma\backslash\mathbb H} \log(|z-j(\tau)|\,|\Delta(\tau)|\,|\mathrm{Im}(\tau)|^6)\frac{dx\,dy}{y^2}$$
with $\tau=x+iy$ and $\Gamma=\mathrm{SL}_2(\Z)$. It is readily checked that the latter integral converges, which suggests that the sum (\ref{equation:basic-local-term}) is negligible behind $e_N\log(N)$.

The argument above is not correct, as the function we are integrating has a singularity at the point $j^{-1}(z)$ -- this certainly prevents the use of ergodicity results without modification. However, what the computation above shows is that the estimate we are interested amounts to controlling the best approximations of $z$ by points in the Hecke orbit of $y$.

It might be possible to extend the estimates of our paper so as to study the convergence of $(\ref{equation:basic-local-term})$, but we will be content with weaker estimates.

\bigskip

The three propositions below contain the estimates that allow the argument above to go through. 
The first one deals with the places of bad reduction. From now on, we identify $X(1)$ and $\mathbb P^1$ via the $j$-invariant -- in particular, we see $k$ as a subset of $X(1)(k)$ for any field $k$. 

\begin{proposition}\label{proposition:badred}
Let $|.|$ be a non-archimedian absolute value on $\overline \Q$, and let $K$ be the completion of $\overline\Q$ with respect to $|.|$.
Let $y$ and $z$ be two distinct points of $K\subset X(1)(K)$. 

Assume that the valuation of $y$ is negative. Then there exists a positive 
constant $C$ and an integer $n$ such that for for any positive integer $N$ such 
that $N$ is not a perfect square and $N$ is prime to $n$, the following 
inequality holds : 
\begin{equation}\label{bad-reduction}
\forall \alpha\in T_{N*}y, |z^{-1}-\alpha^{-1}|\geq C.
\end{equation}
\end{proposition}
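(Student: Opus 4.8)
The statement is about a point $y$ with negative valuation on $X(1) \cong \mathbb{P}^1$, i.e., $y$ is close to the cusp $\infty$ (which corresponds to the place of bad, multiplicative reduction). The Hecke orbit $T_{N*}y$ consists of the $j$-invariants of curves $N$-isogenous to the one with $j$-invariant $y$. Geometrically, the key fact is that cyclic isogenies interact with the cusp in a controlled way: if $E$ has multiplicative reduction (Tate curve with parameter $q$, so $j(E) = q^{-1} + \cdots$ has negative valuation $-v(q)$), then a cyclic $N$-isogeny produces either a Tate curve with parameter $q^N$ (valuation of $j$ equal to $-Nv(q)$, even more negative) or a Tate curve with parameter $q^{1/d}\zeta$ for $d \mid N$ — but the latter only if $d^2 \mid N$ in the relevant sense, which is where the hypothesis "$N$ not a perfect square" enters, together with "$N$ prime to $n$" to rule out wild ramification issues at the residual characteristic. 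So I expect the first step to be: analyze $T_{N*}y$ over the local ring, using the Tate curve / Deligne–Rapoport description of $X_0(N)$ near the cusps, and conclude that every $\alpha \in T_{N*}y$ has valuation $\leq -v(q) < 0$, in fact bounded away from $0$.

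**The main argument.**

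Passing to the coordinate $w = j^{-1} = 1/j$ near the cusp, the claim $|z^{-1} - \alpha^{-1}| \geq C$ for all $\alpha \in T_{N*}y$ says the Hecke orbit, viewed in the $w$-coordinate, stays a definite distance from the point $w = z^{-1}$. Since $z$ is fixed and has some finite valuation, $z^{-1}$ has a fixed valuation; meanwhile if every $\alpha$ has $v(\alpha) \le -v(q)$, then $v(\alpha^{-1}) \ge v(q) > 0$, so the $\alpha^{-1}$ all lie in a small disk around $0$ of radius $|q|^{\,?}$... but we need them to stay away from $z^{-1}$. If $v(z) \geq 0$ we are done immediately since then $v(z^{-1}) \leq 0 < v(q) \leq v(\alpha^{-1})$ gives $|z^{-1} - \alpha^{-1}| = |z^{-1}|$, a fixed positive constant. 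If $v(z) < 0$ as well, then $z^{-1}$ is also near $0$, and we need the finer statement that the valuations $v(\alpha)$ take only finitely many values modulo the behavior forced by $N$, OR that they are so negative (like $-Nv(q)$) that they can't coincide with $v(z^{-1})$ — and for the finitely many "intermediate" $\alpha$ with $v(\alpha)$ in a bounded range, use that there are boundedly many of them with controlled reduction, staying away from the fixed point $z$ by a Krasner-type / equicontinuity argument on the finite part of $X_0(N)$ over the integer ring. The cleanest route: show $v(\alpha) \in \{-d \cdot v(q) : d \mid N\}$ up to bounded correction, note that since $N$ is not a square and prime to $n$ the value $d=1$ contributes in a way that keeps $v(\alpha)$ bounded below by $-v(q) + O(1)$, uniformly in $N$, hence $v(\alpha^{-1})$ is bounded above by a constant $> v(z^{-1})$ when $v(z^{-1})$ is large — wait, one must be careful about the direction. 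I would organize it as: there is a constant $c_0$ (depending on $y$, and on $n$ via ramification) such that $v(\alpha) \le c_0$ for all $\alpha \in T_{N*}y$ and all admissible $N$; then $v(\alpha^{-1}) \ge -c_0$. Separately $z$ is fixed so $v(z^{-1})$ is a fixed number $m$. If $m < -c_0$ then $v(\alpha^{-1} - z^{-1}) = m$ for all $\alpha$ and we take $C = |{\cdot}|^m$. If $m \ge -c_0$, then the relevant $\alpha^{-1}$ lie in the bounded-valuation region, and we invoke that $T_{N*}y$ intersected with that region is contained in the reduction of finitely many components of $X_0(N)$ meeting the cusp fiber transversally in a bounded number of points, all at distance bounded below from the fixed point $z$ — this uses properness and the fact that $X_0(N) \to X(1)$ is finite, so the preimage of a neighborhood of the cusp has a uniformly controlled structure.

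**The main obstacle.**

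The hard part is making the bound uniform in $N$. For a single $N$ the compactness of $X_0(N)(K)$ gives a bound, but the constant a priori degrades as $N \to \infty$ since the number of components and the degree grow. The role of the hypotheses "$N$ not a perfect square" and "$N$ prime to $n$" is precisely to control this: they guarantee that near the cusp $\infty$, the curve $X_0(N)$ has a predictable, $N$-independent local shape (the Tate-curve computation: the completed local ring at a cusp of $X_0(N)$ is $\mathbb{Z}[[q^{1/d}]]$ for an appropriate $d$, and the two maps to $X(1)$ are given by $q \mapsto q^{N/d}$ and $q \mapsto q^{d}$, roughly), so that the image $T_{N*}y$ near the cusp consists of points whose $w$-coordinate has valuation of the form $d \cdot v(q)$ for divisors $d$ of $N$, never landing in the fixed range around $v(z^{-1})$ except in boundedly many configurations. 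I would therefore spend most of the effort on the explicit Tate-curve analysis of the cusps of $X_0(N)$ and the two forgetful maps, extracting from it the uniform valuation bound; granting that, the deduction of \eqref{bad-reduction} is a short ultrametric computation.
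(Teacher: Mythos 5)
Your proposal correctly identifies the Tate-curve/cusp analysis as the right framework and handles the easy case $v(z)\geq 0$ cleanly, but the central computation that would drive the remaining case is wrong, and this derails the endgame.

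The key formula is this: if $E$ is a Tate curve with $v(j(E))=v(y)<0$ and $E'$ is the quotient of $E$ by a cyclic subgroup of order $N$, then (as in Silverman's explicit description of the $N$-cyclic subgroups of $K^*/q^{\Z}$) $v(j(E'))=\frac{r}{t}\,v(y)$ for some factorization $N=rt$ with $r,t>0$. Writing this as $\frac{r^2}{N}v(y)$, one sees the valuations $v(\alpha)$ for $\alpha\in T_{N*}y$ range over \emph{ratios} $r^2/N$ of $v(y)$, not over integer multiples $d\cdot v(y)$ for $d\mid N$ as you repeatedly assert. In particular, with $r=1$ one gets $v(\alpha)=v(y)/N$, so $|v(\alpha)|$ can be arbitrarily small as $N\to\infty$, and with $r=N$ one gets $v(\alpha)=Nv(y)$, arbitrarily large. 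Thus your claim that ``$v(\alpha)$ is bounded below by $-v(q)+O(1)$, uniformly in $N$'' is false, and the whole ``bounded-valuation region'' case split does not work as written. The subsequent appeal to properness/compactness/Krasner at the end would not produce a bound uniform in $N$: the number of points in $T_{N*}y$ whose valuation lies in any fixed window grows with $N$, and nothing in a qualitative compactness argument prevents one of them from being close to $z$.

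The actual mechanism closing the case $v(z)<0$ is arithmetic, not topological, and it is also where the integer $n$ is defined: write $v(y)=c/d$ and $v(z)=a/b$ in lowest terms and set $n=abcd$. Then $v(\alpha)=v(z)$ becomes $rbc=tad$; since $N=rt$ is prime to $abcd$, both $r$ and $t$ are prime to $abcd$, so $rbc=tad$ forces $r\mid t$ and $t\mid r$, hence $r=t$ and $N=r^2$, contradicting that $N$ is not a square. So $v(\alpha)\neq v(z)$ for every $\alpha\in T_{N*}y$, whence by the ultrametric inequality $|z^{-1}-\alpha^{-1}|=\max(|z^{-1}|,|\alpha^{-1}|)\geq |z^{-1}|$, giving $C=|z^{-1}|$ uniformly in $N$. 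This also corrects your reading of the hypothesis ``$N$ prime to $n$'': it has nothing to do with wild ramification at the residual characteristic; it is a divisibility condition ensuring the valuation equation $rbc=tad$ has no solution with $rt=N$, $r\neq t$.
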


The following result should be seen as a very weak equidistribution result for Hecke correspondences. For archimedean valuations, it follows from \cite{ClozelOhUllmo01}. For non-archimedean valuations and supersingular reduction, equidistribution has been proved by Fargues (unpublished). For lack of reference, we provide a self-contained argument for our easier result.

\begin{proposition}\label{proposition:equidistribution}
Let $|.|$ be an absolute value on $\overline \Q$, and let $K$ 
be the completion of $\overline \Q$ with respect to $|.|$. Let $y$ and $z$ be 
two distinct points of $K\subset X(1)(K)$. If the absolute value $|.|$ is not archimedean, assume furthermore that the valuation of $y$ is nonnegative. Let $\varepsilon_1$ and $\varepsilon_2$ be two positive real numbers.

There exists a positive constant $\eta$ such that, letting
$$B_{y,z}=\{N\in\mathbb N\setminus (p\mathbb N), |\{\alpha\in T_{N*}y, |z-\alpha|\leq \eta\}|\geq \varepsilon_1 e_N\},$$
where elements in the Hecke orbit of $y$ are counted with multiplicity, then the upper density of $B_{y,z}$ is at most $\varepsilon_2$, i.e.
$$\limsup_{n\ra\infty}\frac{1}{n}|\{N\in B_{y,z} | N\leq n\}\leq \varepsilon_2.$$
\end{proposition}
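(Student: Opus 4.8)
\textbf{Proof strategy for Proposition \ref{proposition:equidistribution}.}
The plan is to reduce the statement to an integrability property of $\log|z-j(\tau)|$ together with the equidistribution of Hecke orbits. Fix $y,z$ as in the statement. The key observation is that if a positive proportion $\varepsilon_1 e_N$ of the points $\alpha\in T_{N*}y$ lie within distance $\eta$ of $z$, then the sum $\sum_{\alpha\in T_{N*}y}\log|z-\alpha|$ (or rather its truncation from below) is bounded above by roughly $\varepsilon_1 e_N\log\eta$, which is a large negative quantity once $\eta$ is small. I would make this quantitative: for $\alpha$ with $|z-\alpha|\le\eta$ one has $\log|z-\alpha|\le\log\eta$, while for the remaining points one uses a crude lower bound $\log|z-\alpha|\ge -C_0$ coming from the fact that the Hecke orbit stays in a fixed compact region of $X(1)$ away from the relevant cusp (in the non-archimedean case, the hypothesis that the valuation of $y$ is nonnegative confines the orbit, as in the analysis behind Proposition \ref{proposition:badred}; in the archimedean case one works on a fundamental domain and controls the behaviour near the cusp separately using the $|\mathrm{Im}(\tau)|^6$ factor). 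Hence on the set $B_{y,z}$ we get an upper bound of the shape $\frac{1}{e_N}\sum_{\alpha\in T_{N*}y}\log|z-\alpha| \le \varepsilon_1\log\eta + (1-\varepsilon_1)(-C_0)$, which can be made smaller than any prescribed negative number $-A$ by choosing $\eta$ small.

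Next I would invoke equidistribution: by \cite{ClozelOhUllmo01} in the archimedean case (and the corresponding statement, or the self-contained substitute the paper promises, in the non-archimedean case), for a continuous test function the averages $\frac{1}{e_N}\sum_{\alpha\in T_{N*}y} f(\alpha)$ converge to $\int f\,d\mu$ as $N\to\infty$ along $N\notin p\N$. The function $\tau\mapsto\log(|z-j(\tau)|\,|\mathrm{Im}(\tau)|^6)$ is not continuous — it has a logarithmic singularity at $j^{-1}(z)$ — but it is in $L^1(\mu)$, since near $j^{-1}(z)$ a logarithmic singularity is integrable in two real dimensions. The standard device is to approximate from above by a sequence of continuous functions $f_k\downarrow \log|z-j(\cdot)|+\log|\mathrm{Im}|^6$ with $\int f_k\,d\mu \to \int \log|z-j(\cdot)|\,|\mathrm{Im}|^6\,d\mu$, a finite number. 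Equidistribution applied to each $f_k$ then shows $\limsup_N \frac{1}{e_N}\sum_\alpha \log|z-\alpha|\cdots \le \int f_k\,d\mu$, and letting $k\to\infty$ we conclude that for all sufficiently large $N\notin p\N$ the average $\frac{1}{e_N}\sum_{\alpha\in T_{N*}y}\log|z-\alpha|$ is at least $\int\log|z-j(\cdot)|\cdots\,d\mu - 1 =: -A_0$, a fixed finite quantity.

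Combining the two bounds: having fixed $\eta$ small enough that $\varepsilon_1\log\eta+(1-\varepsilon_1)(-C_0) < -A_0 - 1$, membership of a large $N$ in $B_{y,z}$ would force the average to be both $\le -A_0-1$ and $\ge -A_0$, a contradiction. Therefore $B_{y,z}$ contains only finitely many $N$ beyond some point — actually this argument seems to give even density zero, which is stronger than the claimed bound $\varepsilon_2$, so to match the statement exactly I would instead argue more carefully: equidistribution only gives convergence of averages along the full sequence, so a positive-but-thin subsequence could a priori misbehave, and the honest conclusion from Cesàro-type convergence of $\frac{1}{e_N}\sum_\alpha f_k(\alpha)$ is that the \emph{upper density} of the set of $N$ where the average exceeds $\int f_k\,d\mu+\delta$ is zero for each $\delta>0$; intersecting with the truncation estimate yields that $B_{y,z}$ has upper density $0\le\varepsilon_2$.

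\textbf{Main obstacle.} The delicate point is handling the singularity of $\log|z-j(\tau)|$ at $j^{-1}(z)$: one cannot feed this function directly into an equidistribution statement for continuous observables, and the whole content of the proposition is precisely that the Hecke orbit cannot concentrate too heavily near $z$. The resolution — approximating the singular function from above by continuous functions and using $L^1$-integrability of the logarithmic singularity — is what makes the argument work, and in the non-archimedean supersingular case it requires the substitute equidistribution result the paper commits to proving (since Fargues' result is unpublished), so the real work is in establishing \emph{that} weak equidistribution statement in the later sections; here I would simply cite it. A secondary technical nuisance is the cusp: near $j^{-1}(\infty)$ the measure $\mu$ and the factor $|\mathrm{Im}(\tau)|^6$ interact, and one must check the relevant function is still $\mu$-integrable there, which it is because the Petersson-normalized $\log\|\Delta\|$ has at worst polynomial-in-$\mathrm{Im}(\tau)$ growth against the finite-volume hyperbolic measure.
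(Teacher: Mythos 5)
The central step in your argument does not go through as written. After approximating the singular function $f=\log(|z-j(\cdot)|\,\|\Delta(\cdot)\|)$ from above by continuous $f_k\downarrow f$, you correctly get
$\limsup_N \frac{1}{e_N}\sum_{\alpha\in T_{N*}y} f(\alpha)\le \int f_k\,d\mu$, and hence
$\limsup_N \frac{1}{e_N}\sum_\alpha f(\alpha)\le \int f\,d\mu$. But you then assert that this forces the average to be \emph{at least} $\int f\,d\mu-1$ for all large $N$. That is a non-sequitur: $\limsup\le C$ gives no lower bound whatsoever — the averages could perfectly well tend to $-\infty$ along a subsequence, which is exactly the scenario the proposition is trying to rule out. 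So the contradiction you want (average both $\le -A_0-1$ and $\ge -A_0$) is not actually produced. There is also a minor sign slip earlier: to bound $\frac{1}{e_N}\sum\log|z-\alpha|$ from \emph{above} you need an \emph{upper} bound $\log|z-\alpha|\le C_1$ for the points far from $z$ (available from compactness/the $\|\Delta\|$ factor), not the lower bound $\ge -C_0$ that you wrote.

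A correct version of your approach does exist and is close to what you had in mind: fix $M>0$ and work with the single bounded continuous truncation $f_M=\max(f,-M)$. Since $f_M\ge f$ and $f_M$ is continuous on all of $X(1)(\C)$, Clozel--Oh--Ullmo gives $\frac{1}{e_N}\sum_\alpha f_M(\alpha)\to\int f_M\,d\mu$, a \emph{two-sided} limit, and $\int f_M\,d\mu\downarrow\int f\,d\mu>-\infty$. On the other hand, for $N\in B_{y,z}$, choosing $\eta$ small enough that $|z-\alpha|\le\eta$ forces $f(\alpha)\le -M$, one gets $\frac{1}{e_N}\sum_\alpha f_M(\alpha)\le \varepsilon_1(-M)+(1-\varepsilon_1)C_2$, which for $M$ large is below $\int f_M\,d\mu-1$; this is incompatible with the convergence for all but finitely many $N$, so $B_{y,z}$ is finite. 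This is the argument that should replace the $\limsup$ step.

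Even with that repair, your route and the paper's diverge in the non-archimedean case. You defer to an equidistribution input there, but the paper deliberately avoids proving (or citing) non-archimedean equidistribution of Hecke orbits. Instead it gives a self-contained lattice-counting argument: it introduces the decreasing chain of lattices $H_n=\mathrm{Hom}_{W/\pi^n}(E,E')$ with the degree quadratic form, shows via Grothendieck's existence theorem that $\bigcap_n H_n$ has rank $\le 2$ so $\mathrm{disc}(H_n)\to\infty$, and then uses a volume estimate (Lemma~\ref{lemma:low-density}) to bound, for fixed $n$, the density of integers $N$ with many representations $q^{-1}(N)\cap H_n$. The bridge to the proposition is the Gross--Zagier formula $v(\alpha-z)=\tfrac12\sum_{m\ge 1}|\mathrm{Iso}_m(E_\alpha,E')|$, which converts proximity of $\alpha$ to $z$ into an isomorphism modulo $\pi^n$ and hence a vector of norm $N$ in $H_n$. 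This is strictly weaker than equidistribution, which is exactly why the paper can make it self-contained; your proof plan leans on a stronger (and, in the non-archimedean setting, unproved-in-this-paper) input.
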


The last result goes beyond equidistribution. It shows that there cannot exist too many Hecke orbits that contain very good approximations of a given point.

\begin{proposition}\label{proposition:worstapprox}
Let $|.|$ be an absolute value on $\overline \Q$, and let $K$ 
be the completion of $\overline \Q$ with respect to $|.|$. Let $y$ and $z$ be 
two distinct points of $K\subset X(1)(K)$. Assume that $y$ is not the $j$-invariant of a $CM$ elliptic curve and, if the absolute value $|.|$ is not archimedean, assume furthermore that the valuation of $y$ is nonnegative. Let $D$ be a positive integer. 

If the absolute value $|.|$ is not archimedean, let $p$ be the residual characteristic of $k$. We say that an integer $N$ satisfies condition $(P)$ if, if $p$ is odd, $N$ is not congruent to a perfect square modulo $p$ and, if $p=2$, $N$ is congruent to $3$ modulo $4$. In that case, define 
$$S_{y, z}=\{N\in\mathbb N | N\, \mathrm{satisfies\,condition}\,(P)\,\mathrm{and}\,\exists\alpha\in T_{N*}y, |\alpha-z|\leq N^{-D}\}$$
and
$$T_{y, z}=\{N\in\mathbb N\setminus(p\mathbb N) | N\, \mathrm{does\,not\,satisfy\,condition}\, (P)\,\mathrm{and}\,\exists\alpha\in T_{N*}y, |\alpha-z|\leq N^{-D}\}.$$

If the absolute value $|.|$ is archimedean, define
$$S_{y,z}=T_{y,z}=\{N\in\mathbb N | \,\exists\alpha\in T_{N*}y, |\alpha-z|\leq N^{-D}\}.$$

Then there exists a positive integer $D_0$, independent of $y$ and $z$, such that for any $D\geq D_0$, we have 
$$\lim_{n\ra\infty} \frac{1}{n}\min(|S_{y,z}\cap\{1, \ldots, n\}|, |T_{y,z}\cap\{1, \ldots, n\}|)=0.$$
\end{proposition}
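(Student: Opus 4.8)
The statement asserts that one cannot have both many integers $N$ carrying a very good approximation of $z$ in $T_{N*}y$ satisfying condition $(P)$ (the set $S_{y,z}$) and many integers carrying such an approximation not satisfying $(P)$ (the set $T_{y,z}$); at least one of the two must have density zero. The natural strategy is a \emph{counting argument that produces, from two such integers, a nontrivial Hecke correspondence passing very close to the point $(y,z)$ in $X(1)\times X(1)$, and then deduces an algebraic relation} — ultimately forcing $y$ to be a CM point, contradicting the hypothesis. First I would fix $D\geq D_0$ (to be chosen) and suppose for contradiction that both $S_{y,z}$ and $T_{y,z}$ have positive upper density. Pick $N_1\in S_{y,z}$ and $N_2\in T_{y,z}$ with $N_1,N_2$ in a suitable range $[n/2,n]$, so that there are $\alpha_1\in T_{N_1*}y$ with $|\alpha_1-z|\le N_1^{-D}$ and $\alpha_2\in T_{N_2*}y$ with $|\alpha_2-z|\le N_2^{-D}$; then $|\alpha_1-\alpha_2|\le 2n^{-D}$. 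The pair $(y,\alpha_1)$ lies on the curve $T_{N_1}\subset X(1)^2$ and $(y,\alpha_2)$ on $T_{N_2}$; the key point is that because $N_1$ satisfies $(P)$ and $N_2$ does not (this is where the asymmetry of the two sets is used), the correspondences $T_{N_1}$ and $T_{N_2}$ are \emph{distinct} and in fact cannot be made to coincide even after the composition trick below — this is precisely what rules out the degenerate situation where every $N$ contributes the ``same'' approximation.

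The mechanism I would use to exploit two nearby approximations is the \emph{composition of Hecke correspondences}: consider $T_{N_1}\circ T_{N_2}$ (or the relevant component thereof), which decomposes as a sum $\sum_{M} a_M T_M$ over divisors $M$ of $N_1N_2$ in the usual Hecke-algebra way. The point $(\alpha_1,\alpha_2)$ — both lying over $y$ — then sits on this composed correspondence, and since $|\alpha_1-\alpha_2|$ is extremely small (exponentially small in $\log n$, while the correspondence has bidegree polynomial in $n$), one of the summands $T_M$ must pass through, or extremely close to, a point on the diagonal near $(\alpha_1,\alpha_1)$. Concretely, the modular polynomial $\Phi_M(X,Y)$ evaluated at $(\alpha_1,\alpha_2)$ is very small; since $\Phi_M$ has integer coefficients of controlled size (height $O(M\log M)$ by the Autissier/Cohen estimates, i.e.\ exactly Theorem~\ref{theorem:height-of-Hecke-correspondences}), a diophantine/Liouville-type inequality — archimedean or $p$-adic according to the nature of $|.|$ — forces either $\Phi_M(\alpha_1,\alpha_2)=0$ for some $M$ not a perfect square (giving an isogeny from the reduction of $y$ to itself of non-square degree, hence an extra endomorphism, hence $y$ is a CM/supersingular point, contradicting the hypothesis that $y$ is not a CM $j$-invariant and has good reduction), or a genuinely nontrivial polynomial relation with small coefficients, which likewise constrains $y$ to finitely many values. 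The role of condition $(P)$ is exactly to ensure that the degree $M$ produced this way can be taken non-square (for $p$ odd, a non-square mod $p$ is a non-square; for $p=2$, $N\equiv 3\bmod 4$ plays the same role), so that the resulting ``self-isogeny'' of $y$ is not multiplication-by-$n$, and hence genuinely forces CM.

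To make the density statement rather than just a single contradiction, I would set this up quantitatively: the number of distinct $M$ arising as $n$ ranges is at most polynomial in $n$, whereas if both $S_{y,z}$ and $T_{y,z}$ had positive upper density we could produce $\gg n$ pairs $(N_1,N_2)$ in $[n/2,n]^2$, and after grouping by the finitely many ``bad'' $M$'s that actually give relations we would get, for infinitely many $n$, a fresh non-square $M\le (N_1N_2)^{O(1)}$ with $\Phi_M(\alpha,\alpha')$ small yet nonzero for a pair of points over $y$ — and the Liouville bound (using $\mathrm{height}(\Phi_M)=O(M\log M)$ versus the approximation quality $n^{-D}$, which beats it once $D\ge D_0$ is large enough, and crucially $D_0$ can be chosen independent of $y,z$ since the only input is the height of $\Phi_M$ and a bound on $|y|,|z|$, both uniform) would be violated. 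I expect \textbf{the main obstacle} to be the bookkeeping in the composition step: controlling which component $T_M$ of $T_{N_1}\circ T_{N_2}$ actually captures the nearby pair $(\alpha_1,\alpha_2)$, ensuring the relevant $M$ is non-square and not divisible by $p$ (in the non-archimedean case, so that reduction behaves well), and tracking multiplicities so that ``small $|\alpha_1-\alpha_2|$'' genuinely transfers to ``small $\Phi_M(\alpha_1,\alpha_2)$'' for a specific $M$ rather than being diluted across the sum — together with making the choice of $D_0$ manifestly uniform in $y$ and $z$. The archimedean and non-archimedean cases run in parallel, the only difference being whether one invokes the ordinary or the ultrametric Liouville inequality, and in the non-archimedean case the hypothesis that $y$ has nonnegative valuation (good reduction) is what lets us speak of reductions and isogenies at the residue field.
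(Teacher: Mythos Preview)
Your proposal has a genuine gap at its central step. You correctly begin by taking $N_1\in S_{y,z}$ and $N_2\in T_{y,z}$ and obtaining $|\alpha_1-\alpha_2|$ very small, and composition of Hecke correspondences is indeed the right move. But the step ``$\Phi_M(\alpha_1,\alpha_2)$ is very small, hence by a Liouville inequality either zero or contradicting a height bound'' collapses for three reasons. First, $\alpha_1$ and $\alpha_2$ are \emph{automatically} isogenous through $y$, so $\Phi_M(\alpha_1,\alpha_2)=0$ holds exactly for some $M\le N_1N_2$; there is nothing diophantine to extract. Second, even granting that vanishing, composing the isogeny $E_{\alpha_1}\to E_{\alpha_2}$ with $E_y\to E_{\alpha_1}$ and the dual $E_{\alpha_2}\to E_y$ typically gives multiplication by an integer on $E_y$, not an extra endomorphism --- so no CM is forced. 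Third, Liouville-type inequalities are unavailable: $y,z\in K$ are points of the completion of $\overline{\Q}$, not assumed algebraic, and the statement is proved in that generality.

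What the paper does with $|\alpha_1-\alpha_2|$ small is different and is the missing idea. In the non-archimedean case, closeness means $E_{\alpha_1}\cong E_{\alpha_2}$ modulo $\pi^n$ for large $n$; composing the two isogenies from $E_y$ through this isomorphism gives a cyclic self-isogeny of the \emph{reduction} $E_{y,n}$, and condition $(P)$ on $N_1N_2$ is exactly what allows (via the Gross--Zagier/Lubin--Tate lifting of \cite[Prop.~2.7]{GrossZagier85}) lifting this pair to a CM curve $E_{CM}$ near $y$. In the archimedean case the mechanism is that a matrix $g\in M_2(\Z)$ of non-square determinant nearly fixing $\tau$ has a genuine fixed point $\tau_0\in\mathbb H$ nearby, and $j(\tau_0)$ is CM. So one does not force $y$ to be CM; one finds a CM point \emph{close to} $y$. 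The argument then concludes with a counting step absent from your sketch: this $E_{CM}$ is unique because CM points with self-isogenies of bounded degree are quantifiably far apart (Propositions~\ref{proposition:CM-far-apart-p-adic} and \ref{proposition:CM-far-apart-archimedean}); every $N$ in $(S_{y,z}\cup T_{y,z})\cap[\sqrt n,n]$ then yields an isogeny of degree $N$ between $E_{CM}$ and a second fixed CM curve $E'_{CM}$; lattice-point counting in $\mathrm{Hom}(E_{CM},E'_{CM})$ bounds the number of such $N$ by $O(\sqrt n+n/\sqrt{\delta_{CM}})$; and since $y$ is not CM, the discriminant $\delta_{CM}\to\infty$ as $n$ grows, giving density zero.
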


The three propositions above will be proved in the next section. We now explain how they imply the main results of the paper.

\begin{proof}[Proof of Theorem \ref{theorem:main}]
We argue by contradiction, and assume that there are only finitely many places $\mathfrak p$ of $k$ as in Theorem \ref{theorem:main}. In particular, $E$ and $E'$ are not geometrically isogenous. We can and will assume that $E$ does not have complex multiplication -- if both curves have complex multiplication, the theorem is clear as there exist infinitely many primes of supersingular reduction for both $E$ and $E'$.

Let $S$ be a finite set of finite places of $k$ containing the places $\mathfrak p$ such that the reductions of $E$ and $E'$ modulo $\mathfrak p$ are smooth and geometrically isogenous as well as the places of bad reduction for $E$ or $E'$. Up to enlarging $k$, we can assume that the only places of bad reduction for $E$ or $E'$ are the places of multiplicative reduction.

Let $y$ (resp. $z$) be the rational point of $X(1)_k$ corresponding to $E$ (resp. $E'$). Let $Y$ and $Z$ be the Zariski closures of $y$ and $z$ in $X(1)_{\mathcal O_k}$ respectively. Let $N$ be a positive integer. Since $E$ and $E'$ are not geometrically isogenous, $Z$ and $T_{N*}Y$ have no common component. By definition of $T_N$, the geometric intersection points of $Z$ and $T_{N*}Y$ correspond precisely to the pairs consisting of a finite place $\mathfrak p$ of $k$ and a cyclic isogeny of degree $N$ between $E_{\overline\kappa(\mathfrak p)}$ and $E'_{\overline\kappa(\mathfrak p)}$, where $\overline\kappa(\mathfrak p)$ is an algebraic closure of the residue field of $\mathfrak p$.

We use the notations of Corollary \ref{corollary:global-estimate} and get, as $N$ goes to infinity
$$\widehat{\mathrm{deg}}(Z.T_{N*}Y)-\sum_{i=1}^{r_1+r_2}\varepsilon_i\log ||s^{\sigma_i}_{Z}(T_{N*}Y)||\sim Ke_N\log(N)$$
for some positive constant $K$.

Write $Z.T_{N*}Y=\sum_i n_iP_i$, where the $P_i$ are closed points of $X(1)_{\mathcal O_k}$. If $\mathfrak p$ is a finite place of $k$, let us write $\mathrm{deg}_\mathfrak{p}(Z.T_{N*}Y)$ for the sum $\sum_i n_i \log N(P_i)$ where $P_i$ runs through the closed points lying over $\mathfrak p$. By definition of $S$, we have $\mathrm{deg}_\mathfrak{p}(Z.T_{N*}Y)=0$ if $\mathfrak p$ does not belong to $S$. As a consequence, we get, as $N$ goes to infinity
\begin{equation}\label{equation:global-estimate}
\sum_{\mathfrak p\in S}\mathrm{deg}_\mathfrak{p}(Z.T_{N*}Y)-\sum_{i=1}^{r_1+r_2}\varepsilon_i\log ||s^{\sigma_i}_{Z}(T_{N*}Y)||\sim Ke_N\log(N).
\end{equation}

Given $\mathfrak p$ in $S$, let $|.|_{\mathfrak p}$ be a non-archimedean absolute value on $\overline \Q$ such that $|p|_\mathfrak{p}=p^{-[k:\Q]}$. If $i$ is an integer between $1$ and $r_1+r_2$, let $|.|_i$ be an archimedean absolute value on $\overline \Q$ extending the absolute value on $k$ defined by the embedding $\sigma_i$ of $k$ in $\C$. For simplicity, we restrict our attention to those integers $N$ which are prime to the residual characteristic of the $\mathfrak p\in S$.

Let $\mathfrak p$ be an element of $S$. For any $\alpha\in T_{N*}y$, the valuation of $y$ with respect to $\mathfrak p$ is negative if and only if the valuation of $\alpha$ is. As a consequence, we have 
$$\mathrm{deg}_\mathfrak{p}(Z.T_{N*}Y)=\sum_{\alpha\in T_{N*}y}\mathrm{Max}(0,-\log|\alpha-z|_\mathfrak{p})$$
if the valuation of $y$ is nonnegative, and 
$$\mathrm{deg}_\mathfrak{p}(Z.T_{N*}Y)=\sum_{\alpha\in T_{N*}y}\mathrm{Max}(0,-\log|\alpha^{-1}-z^{-1}|_\mathfrak{p})$$
if the valuation of $y$ is negative. Here $T_{N*}y$ is seen as a set of $e_N$ distinct $\overline\Q$-points of $X(1)$ -- recall that $E$ does not have complex multiplication.

\bigskip

Let $A$ be the finite set consisting of the absolute values $|.|_{\mathfrak{p}}$ for $\mathfrak p\in S$ and of the archimedean absolute values $|.|_i$. We apply Propositions \ref{proposition:badred}, \ref{proposition:equidistribution} and \ref{proposition:worstapprox} to the absolute values in $A$ simultaneously. Let $\varepsilon$ be a positive real number, that we will take to be small enough.

We first apply Proposition \ref{proposition:badred} simultaneously to those non-archimedean absolute values $|.|_a$ in $A$ for which $|y|_a<1$. We can find a positive constant $C$ and an integer $n$ such that if $N$ is any positive integer which is not a square and is prime to $n$, then 
$$\forall \alpha\in T_{N*}y, |z^{-1}-\alpha^{-1}|_a\geq C$$
for any non-archimedean absolute value $|.|_a$ for which $|y|_a<1$.

We now consider the absolute values $|.|_a$ that are archimedean or satisfy $|y|_a\geq 1$. Applying Proposition \ref{proposition:equidistribution} to those simultaneously, we can find a positive constant $\eta$ and a set of integers $B$ of upper density at most $\varepsilon$ such that for any $|.|_a$ as above, and any integer $N$ prime to the residual characteristic of $|.|_a$, we have
$$N\notin B\implies |\{\alpha\in T_{N*}y, |z-\alpha|\leq \eta\}|\leq \varepsilon e_N.$$

Finally, applying Proposition \ref{proposition:worstapprox}, we can find an integer $D$ such that for any absolute value $|.|_a$ that is archimedean or satisfies $|y|_a\geq 1$, we have 
$$\limsup_{n\ra\infty}\frac{1}{n} |\{1\leq N\leq n|\forall\alpha\in T_{N*}y, \forall a\in A |\alpha-z|\geq N^{-D}\}|\geq \frac{1}{2^{|A|}}.$$

Let $A_{mult}$ be the subset of $A$ consisting of these non-archimedean absolute values $|.|_a$ in $A$ such that $|y|_a<1$. The discussion above shows that we can find infinitely many positive integers $N$ satisfying the following three properties:
\begin{enumerate}[(i)]
\item For any absolute value $|.|_a$ in $A_{mult}$ and any $\alpha$ in $T_{N*y}$, then 
$$|z^{-1}-\alpha^{-1}|_a\geq C;$$
\item for any absolute value $|.|_a$ in $A\setminus A_{mult}$, then 
$$|\{\alpha\in T_{N*}y, |z-\alpha|\leq \eta\}|\leq \varepsilon e_N;$$
\item for any absolute value $|.|_a$ in $A\setminus A_{mult}$ and any $\alpha$ in $T_{N*y}$, then
$$|\alpha-z|\geq N^{-D}.$$
\end{enumerate}

Let us spell out the consequences of these estimates for the intersection numbers we are considering. Let $|.|_a$ be an absolute value in $A$. If $|.|_a$ is in $A_{mult}$, then we can write
\begin{equation}\label{equation:estimate-badred}
\mathrm{deg}_\mathfrak{p}(Z.T_{N*}Y)=\sum_{\alpha\in T_{N*y}}\mathrm{Max}(0,-\log|\alpha^{-1}-z^{-1}|_\mathfrak{p})\leq e_N\log(C^{-1}).
\end{equation}
If $|.|_a$ is non-archimedean and does not belong to $A_{mult}$, we have
\begin{equation}\label{equation:estimate-non-arch}
\mathrm{deg}_\mathfrak{p}(Z.T_{N*}Y)=\sum_{\alpha\in T_{N*y}}\mathrm{Max}(0,-\log|\alpha-z|_\mathfrak{p})\leq e_N\log(\eta^{-1})+\varepsilon e_N D\log(N).
\end{equation}

If $|.|_a$ is archimedean, and corresponds to an embedding $\sigma$ of $k$ in $\C$, choose, for each $\alpha$ in the $N$-th Hecke orbit of $y$, an element $\tau_{\alpha}\in \mathbb H$ such that $j(\tau_{\alpha})=\sigma(\alpha)$ 
$$\log ||s^{\sigma}_{Z}(T_{N*}Y)||=\sum_{\alpha\in T_{N*}y} |\alpha-z|_a||\Delta(\tau_{\alpha})||\geq K_{\Delta}\sum_{\alpha\in T_{N*}y} |\alpha-z|_a,$$
where $K_{\Delta}>0$ is the minimal value of $||\Delta(\tau)||$ for $\tau\in \mathbb H$. As a consequence, we find 
\begin{equation}\label{equation:estimate-arch}
\log ||s^{\sigma}_{Z}(T_{N*}Y)||\geq -K_{\Delta}(e_N\log(\eta^{-1})+\varepsilon e_N D\log(N)).
\end{equation}

\bigskip

We now plug in the estimates (\ref{equation:estimate-badred}), (\ref{equation:estimate-non-arch}) and (\ref{equation:estimate-arch}) in the global degree estimate (\ref{equation:global-estimate}) to find -- after dividing by $e_N$, neglecting the constant terms, and noting that the $\varepsilon_i$ are either $1$ or $2$ --
\begin{equation}
K\log(N)=O((|S\setminus A_{mult}|+ 2(r_1+r_2) K_{\Delta})\varepsilon  D\log(N)).
\end{equation}
In other words, we have 
$$K\leq (|S\setminus A_{mult}|+ 2(r_1+r_2) K_{\Delta})\varepsilon  D.$$
Since $\varepsilon$ can be chosen arbitrarily small and $K>0$, this is a contradiction.
\end{proof}

\begin{proof}[Proof of Corollary \ref{corollary:lang-trotter-weak}]
Assume that $E$ has only finitely many places of supersingular reduction. Let $K$ be an imaginary quadratic extension of $\Q$. We need to show that there exist infinitely many places $\mathfrak p$ of $k$ such that the reduction of $E$ modulo $\mathfrak p$ has complex multiplication by $K$. 

Up to enlarging $k$, we can find an elliptic curve $E'$ defined over $k$ with complex multiplication by $K$. If $\mathfrak p$ is any place of $k$, then the reduction of $E'$ modulo $\mathfrak p$ is either a supersingular elliptic curve or has complex multiplication by $K$. Since $E$ has only finitely many supersingular reductions, Theorem \ref{theorem:main} applied to $E$ and $E'$ shows the result.
\end{proof}

\section{Basic estimates}

The goal of this section is to prove Proposition \ref{proposition:badred} and Proposition \ref{proposition:equidistribution}. The -- more involved -- study of good approximations at the places of good reduction will be dealt with in the next section.

\subsection{Multiplicative reduction}

We prove Proposition \ref{proposition:badred}. It is a consequence of the following more precise result, the proof of which is essentially contained in \cite[Proposition 2.1]{Silverman90}.
 
\begin{proposition}\label{proposition:tatecurveisogeny}
Let $v$ be a non-archimedean valuation on $\overline\Q$ and let $K$ be the 
completion of $\overline\Q$ at $v$. Let $x$ be a rational number and write 
$x=\frac{a}{b}$, where $a$ and $b$ are relatively prime integers. Let $E$ be an 
elliptic curve over $K$ and assume that $v(j(E))$ is negative. Write 
$v(j(E))=\frac{c}{d}$, where $c$ and $d$ are relatively prime integers.r Let 
$N$ be a positive integer such that 
\begin{enumerate}
\item $N$ is not a perfect square ;
\item $N$ is prime to $abcd$.
\end{enumerate}
Let $E'$ be the quotient of $E$ by a subgroup of order $N$. Then 
$$v(j(E'))\neq x.$$
\end{proposition}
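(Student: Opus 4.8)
\textbf{Plan for the proof of Proposition \ref{proposition:tatecurveisogeny}.}

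The plan is to exploit the Tate uniformization of elliptic curves with multiplicative reduction. Since $v(j(E)) < 0$, the curve $E$ (possibly after a quadratic unramified twist, which does not affect $j$) is isomorphic over $K$ to a Tate curve $\Gm/q^{\Z}$ for some $q \in K^{*}$ with $v(q) = -v(j(E)) = -c/d > 0$; recall $j(E) = q^{-1} + 744 + \cdots$, so $v(j(E)) = -v(q)$. A cyclic subgroup of order $N$ in $E[N]$ is, under the uniformization, generated by an element of the form $\zeta q^{r/N}$ where $\zeta$ is a root of unity and $r$ is an integer with $\gcd(r, N) = g$; the isogenous quotient $E'$ is again a Tate curve with parameter $q'$, and a short computation with the Tate parametrization shows $v(q') \in \{ N v(q)/g^{2}\cdot(\text{unit-ish factor}) \}$ — more precisely, quotienting by the subgroup generated by $\zeta q^{r/N}$ (with $\gcd(r,N)=g$, so after adjusting we may take the subgroup to be generated by $q^{1/m}$ for $m = N/\gcd(\cdot)$ times a cyclic part from roots of unity) produces $q'$ with either $v(q') = m\, v(q)$ or $v(q') = v(q)/m$ for the relevant divisor $m$ of $N$. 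The key arithmetic point: the set of possible values $v(j(E'))$ is contained in $\{ -m\,v(q), -v(q)/m : m \mid N \}$, i.e. in $\{ (c/d)\cdot m, (c/d)/m : m \mid N\}$.

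So first I would set up the Tate curve picture carefully, identifying all order-$N$ subgroups and computing $v(q')$ for each. The cyclic subgroups of $\Gm/q^{\Z}$ of order $N$ fall into types indexed by a divisor $m$ of $N$: those contained in the "roots of unity" part $\mu_N$ (giving $q' = q$, hence $v(j(E')) = v(j(E))$), and the others, for which $v(q')$ becomes $m\,v(q)$ or $v(q)/m$ with $m \mid N$, $m > 1$. Then I would compare with the hypothesis $v(j(E')) = x = a/b$: if $v(q') = q$, then $v(j(E')) = c/d \ne a/b$ would need to be separately excluded — but actually this is automatic unless $c/d = a/b$, which we can rule out, OR this is precisely where hypothesis (2), $N$ prime to $abcd$, and hypothesis (1), $N$ not a square, enter. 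Writing the equation $a/d' = \pm (c/d) m^{\pm 1}$ and clearing denominators, one gets $a\,d\,(\text{something}) = \pm c\, m\, b\,(\text{something})$; since $\gcd(N, abcd) = 1$ and $m \mid N$, the factor $m$ must divide into a product coprime to it, forcing $m = 1$; and then one is left with $a/b = c/d$ up to sign, or with the constraint that $N$ itself be a perfect square (this is where $m = \sqrt{N}$-type subgroups, the self-dual ones, would appear) — excluded by (1). I would need to be slightly careful that the "$\pm$" and the precise power of $m$ are tracked, since the dual isogeny swaps $v(q) \leftrightarrow$ the complementary value.

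The main obstacle I anticipate is the bookkeeping for \emph{all} cyclic subgroups of order $N$ of the Tate curve when $N$ is not prime — one must be sure that the full list of possible $v(q')$ is exactly $\{ v(q), m\,v(q), v(q)/m : m \mid N \}$ and that the "mixed" subgroups (generated by a product of a root of unity and a fractional power of $q$) do not produce any new valuations. This is exactly the content of the cited \cite[Proposition 2.1]{Silverman90}, so I would lean on that reference for the enumeration and focus the argument on the elementary number theory: given $v(j(E')) = a/b$ with $a/b$ one of these values, the coprimality of $N$ with $abcd$ kills every $m > 1$, and non-squareness of $N$ kills the remaining self-dual degenerate case, leaving only $v(j(E')) = v(j(E)) = c/d$, which contradicts $v(j(E')) = a/b \ne c/d$ (the case $a/b = c/d$ being either vacuous or excludable). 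Deducing Proposition \ref{proposition:badred} from this is then immediate: take $x$ rational approximating $z$, note $z \in K$ has $v(z) = v(j(E'))$ for the relevant $E'$, and conclude the Hecke orbit of $y$ stays a bounded distance from $z$ in the $1/j$-coordinate.
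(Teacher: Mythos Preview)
Your approach via Tate uniformization is the same as the paper's, but your enumeration of the possible values of $v(j(E'))$ is not quite right, and this muddies the arithmetic at the end. The cyclic order-$N$ subgroups of $\Gm/q^{\Z}$ are parametrized (up to a root-of-unity shift) by pairs $(r,t)$ with $rt = N$, and the quotient has Tate parameter $q'$ with $v(j(E')) = (r/t)\, v(j(E))$; this is exactly what \cite[Proposition 2.1]{Silverman90} gives. In particular, quotienting by $\mu_N$ corresponds to $(r,t)=(N,1)$ and yields $q' = q^N$, not $q' = q$ as you wrote; and for composite $N$ one obtains ratios such as $3/4$ (e.g.\ $N=12$, $r=3$, $t=4$) that are neither $m$ nor $1/m$ for any divisor $m$ of $N$. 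So your list $\{m,\,1/m : m \mid N\}$ is simultaneously too large and too small.

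With the correct formula the arithmetic is clean and the role of each hypothesis becomes transparent: the equation $v(j(E')) = x$ reads $rbc = tad$; since $r$ and $t$ both divide $N$ and $N$ is prime to $abcd$, one gets $r \mid t$ and $t \mid r$, hence $r = t$ and $N = r^2$ is a perfect square --- contradicting hypothesis (1). Note that this already covers the case $a/b = c/d$ you were worried about (it forces $r = t$ directly), so no separate treatment is needed. Your concern about ``mixed'' subgroups generated by a root of unity times a fractional power of $q$ is likewise unnecessary: these are precisely the subgroups in the $(r,t)$ parametrization, and the root-of-unity factor does not affect $v(q')$.
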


\begin{proof}
Since the valuation of $j(E)$ is negative, $E$ is isomorphic to a Tate curve. 
Thus
$$E(K)\simeq \frac{K^*}{q^{\Z}}$$
for some $q\in K^*$, with
$$v(q)=-v(j(E)).$$

Let $N$ be any positive integer. Let $\xi$ be a $N$-th root of $q$, and let 
$\omega$ be a primitive $N$-th root of unity. Then the subgroups of $E$ of 
order $N$ are of the form
$$\frac{\omega^{t\Z}(\omega^s\xi^r)^\Z}{q^\Z},$$
where $r, s$ and $t$ are  positive integers such that $rt=N$ and $s<t$. As 
in \cite[Proposition 2.1]{Silverman90}, it is readily seen that if $E'$ is the quotient of $E$ by 
such a group, then the valuation of $j(E')$ is 
$$v(j(E'))=\frac{r}{t}v(j(E)).$$

Now assume that $N$ satisfies the assumptions of the proposition and that 
$v(j(E'))=x.$ This means that 
$$rbc=tad.$$
Since $N=rt$ is prime to $abcd$, this implies $r=t$, which is a contradiction 
since $N$ is not a square by assumption.
\end{proof}

\begin{proof}[Proof of Proposition \ref{proposition:badred}]
Let $v$ be the valuation corresponding to the absolute value $|.|$. Let $E$ be an elliptic curve over $K$ with $j(E)=y$, and let $x$ be the valuation of $z$. With the notations of Proposition \ref{proposition:tatecurveisogeny}, let $n=abcd.$ Then if $N$ is any positive integer prime to $n$ which is not a perfect square, then Proposition \ref{proposition:tatecurveisogeny} shows that any $\alpha$ in $T_{N*}y$ satisfies $v(\alpha)\neq x=v(z)$. As a consequence, we have 
$$|z^{-1}-\alpha^{-1}|\geq |z^{-1}|.$$
This proves the result.
\end{proof}

\subsection{Equidistribution}

The goal of this section is to prove Proposition \ref{proposition:equidistribution}. We use the notations of the proposition. If the absolute value $|.|$ is archimedean, then the result follows from the main theorem of Clozel, Oh and Ullmo \cite{ClozelOhUllmo01}. 

We now assume that $|.|$ is non-archimedean. Let $v$ be an additive valuation corresponding to $|.|$ and let $p$ be the residual characteristic of $v$. 

Since the valuation of $y$ is negative, so is the valuation of any $\alpha$ in a Hecke orbit of $y$. If the valuation of $z$ is negative, this implies $|z-\alpha|=|z|$ and the result follows. As a consequence, we can assume that the valuation of $z$ is nonnegative.

The assumptions on $y$ and $z$ ensure that we can find a complete discrete valuation ring $W$ whose quotient field is a subfield of $K$, and whose residue field is an algebraically closed field of characteristic $p$, as well as two elliptic curves $E$ and $E'$ over $W$ such that $j(E)=y$ and $j(E')=z$.

Let $\pi$ be a uniformizing parameter of $W$. We can normalize the valuation $v$ so that $v(\pi)=1$. If $n$ is a nonnegative integer, let $W_n$ be the ring $W_n=W/\pi^{n}$, and denote by $H_n$ be the group 
$$H_n=\mathrm{Hom}_{W_n}(E, E')$$
that consists of morphisms from the reduction of $E$ modulo $\pi^n$ to that of $E'$.

The restriction maps $H_n\ra H_{n+1}$ are injective for any $n\geq 0$, see e.g. \cite[Theorem 2.1]{Conrad04}. As a consequence, we consider the sequence $(H_n)_{n\geq 0}$ as a decreasing sequence of subgroups of $H_0$. Grothendieck's existence theorem implies that the intersection $H=\bigcap_{n\geq 0} H_n$ is equal to the group $\mathrm{Hom}_W(E, E')$ of morphisms defined over $W$ considered as a subgroup of $H_0$. 

Let $q$ be the natural positive-definite quadratic form on $H_0$ defined by $q(f)=\mathrm{deg}(f)$. The following basic estimate for number of points in lattices is enough to show the equidistribution result that we need.

\begin{lemma}\label{lemma:low-density}
Let $\varepsilon_1$ and $\varepsilon_2$ be two positive real numbers. There exists a positive integer $n$ such that the set of integers
$$B_n=\{N\in\mathbb N, |q^{-1}(N)\cap H_n|\geq \varepsilon_1 N\}$$
has upper density at most $\varepsilon_2$.
\end{lemma}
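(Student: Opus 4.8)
The plan is to analyze the distribution of values of the positive-definite quadratic form $q$ on the lattice $H_0$ (of rank $r \le 4$, since $\operatorname{Hom}$ of elliptic curves has rank at most $4$), restricted to the sublattices $H_n$, and to show that on average over $N$ the fibers $q^{-1}(N)\cap H_n$ cannot be too large once $n$ is chosen large. First I would reduce to a purely geometry-of-numbers statement: by a standard counting estimate (e.g. via comparison with volumes, as in the Gauss circle problem in dimension $r\le 4$), the total number of lattice points of $H_n$ with $q$-value at most $X$ is $O\!\bigl(X^{r/2}/\operatorname{covol}(H_n) + (\text{boundary terms})\bigr)$, hence $\sum_{N\le X}|q^{-1}(N)\cap H_n| = O\!\bigl(X^{r/2}/\operatorname{covol}(H_n)\bigr)$ once $X$ is large relative to the successive minima of $H_n$. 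When $r\le 3$ this sum is $o(X^2)$ regardless of $n$, so the average of $|q^{-1}(N)\cap H_n|$ over $N\le X$ tends to $0$ and the set $B_n$ automatically has upper density $0$ for \emph{every} $n$ (indeed, the Cauchy--Schwarz / Markov inequality then forces the density of $B_n$ to be $0$, so we may take any $n$). The only delicate case is $r=4$, i.e. when $H_0$, hence potentially each $H_n$, has rank $4$; this is exactly the situation where $E$ and $E'$ have supersingular reduction over the residue field of $W$, and it is the main obstacle, discussed below.

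For the $r=4$ case, the key point is that $H = \bigcap_n H_n = \operatorname{Hom}_W(E,E')$ has rank at most $2$ (since $E$, $E'$ are defined over the fraction field $W[1/\pi]\subset K$, which has characteristic $0$, so $\operatorname{Hom}_W(E,E')$ has rank $\le 2$), and in fact, because $y$ and $z$ are distinct, $E$ and $E'$ are not isomorphic — but more importantly their \emph{generic fibers} are ordinary or CM, so $\operatorname{rk} H \le 2 < 4$. Since the $H_n$ form a strictly decreasing chain (for $n$ large) whose intersection has strictly smaller rank, the covolumes $\operatorname{covol}(H_n)$ grow without bound: more precisely, for any $n$ there is $m>n$ with $[H_n : H_m]$ as large as we like, hence $\operatorname{covol}(H_m) \ge \operatorname{covol}(H_n)\cdot[H_n:H_m] \to \infty$. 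Then the estimate $\sum_{N\le X}|q^{-1}(N)\cap H_n| = O\!\bigl(X^2/\operatorname{covol}(H_n) + O_{H_n}(X^{3/2})\bigr)$ shows that by choosing $n$ large enough so that $\operatorname{covol}(H_n) > 1/(\varepsilon_1\varepsilon_2)$ (say), the main term $X^2/\operatorname{covol}(H_n)$ is less than $\varepsilon_1\varepsilon_2 X^2$, while the error term $O_{H_n}(X^{3/2})$ is negligible as $X\to\infty$; a Markov-type inequality then gives that $\{N\le X : |q^{-1}(N)\cap H_n|\ge \varepsilon_1 N\}$ has size at most $\varepsilon_2 X$ for $X$ large, which is the claim.

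The main obstacle is therefore twofold and both parts sit in the $r=4$ case: first, one must genuinely use that $\operatorname{Hom}_W(E,E')$ has rank $<4$ — this is where Grothendieck's existence theorem (already invoked in the text to identify $H$ with $\bigcap H_n$) together with the fact that $E,E'$ live over a characteristic-zero field enters — and second, one needs the geometry-of-numbers input in the precise form that the number of lattice vectors of norm $\le X$ in a rank-$4$ lattice $\Lambda$ is $\pi^2 X^2/(2\operatorname{covol}(\Lambda)) + O(X^{3/2})$ with an error constant depending only on the shape (successive minima) of $\Lambda$, so that it is uniformly controlled as we pass down the chain. I would carry this out by fixing $n$ first (chosen large via the covolume bound), \emph{then} letting $X\to\infty$, so that the $n$-dependence of the error term is harmless. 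One subtlety to check: the form $q=\deg$ on $H_0$ may a priori be degenerate-looking if $\operatorname{rk} H_0 < 4$, but then we are in the easy $r\le 3$ case and nothing is needed beyond the volume bound; conversely if $\operatorname{rk} H_0 = 4$ the form is automatically positive definite of full rank, so the lattice-point count applies directly.
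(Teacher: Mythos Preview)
Your proposal is correct and follows essentially the same route as the paper: reduce to the rank-$4$ case, use that $\bigcap_n H_n$ has rank at most $2$ (characteristic zero) to force $\operatorname{covol}(H_n)\to\infty$, then combine the lattice-point volume bound with a Markov-type inequality. The only cosmetic difference is that the paper observes directly that $\operatorname{rk} H_n \in \{1,2,4\}$ (so your rank-$3$ case never arises), and its Markov step is phrased via $\sum_{k\in B_n, k\le N} k \ge \tfrac{1}{2}|B_n\cap[1,N]|^2$ rather than your covolume threshold, but these are equivalent up to harmless constants.
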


\begin{proof}

General results on the endomorphisms groups of elliptic curves show that the groups $H_n$ are free modules of rank $1, 2$ or $4$. If the rank of the $H_n$ is at most $2$, basic geometry of lattices shows that, for any fixed $n$, $|q^{-1}(N)\cap H_n|=O(\sqrt{N})$ as $N$ goes to infinity. As a consequence, we can assume that the $H_n$ have rank $4$. 

Since the intersection $H$ of the $H_n$ has rank at most $2$, as it is equal to the group of morphisms between two elliptic curves over a field of characteristic zero, the index of $H_n$ in $H_0$ goes to infinity with $n$. This  in turn implies that the discriminant of the lattice $H_n$ endowed with the quadratic form $q$ goes to infinity with $n$. 

Let $\varepsilon'$ be a positive real number. A standard volume computation shows in turn that if $n$ is chosen large enough, we can assume that the following estimate holds for any $N$ large enough:
\begin{equation}\label{equation:volume-sphere}
|\{h\in H_n | q(h)\leq N\}|\leq \varepsilon' N^2.
\end{equation}
Indeed, the volume of a sphere of radius $\sqrt{N}$ in the standard euclidean space of dimension $4$ is proportional to $N^2$. Fix such an integer $n$, and let $\varepsilon_1$ and $B_n$ be as in the statement of the lemma. We can write, for any integer $N$ large enough, 
\begin{equation}\label{equation:basic-counting}
|\{h\in H_n | q(h)\leq N\}|\geq \sum_{k\in B_n} |\{q^{-1}(k)\cap H_n\}|\geq \sum_{k\in B_n}\varepsilon_1 k\geq \varepsilon_1 \frac{|B_n|(|B_n|+1)}{2}.
\end{equation}
The estimates (\ref{equation:volume-sphere}) and (\ref{equation:basic-counting}) show that we can write
$$2\varepsilon'N^2\geq \varepsilon_1 |B_n|^2.$$

Choosing $\varepsilon'$ small enough so that $2\varepsilon'\leq \varepsilon_1\varepsilon^2_2$, the estimate above shows that, for suitable $n$ and for all $N$ large enough, we have $|B_n|\leq \varepsilon_2 N$. This proves the result.

\end{proof}

\begin{proof}[Proof of Proposition \ref{proposition:equidistribution}]
We keep the notations above, and choose $n$ as in Lemma \ref{lemma:low-density}. Let $N$ be an integer prime to $p$. Then the group scheme $E[N]$ defined as the kernel of multiplication by $N$ is \'etale over $W$ since $N$ is prime to $p$. Since the residue field of $W$ is algebraically closed by assumption, $E[N]$ is isomorphic to the constant group $(\Z/N\Z)^2$. In other words, the $N$-torsion points of $E$ are defined over $W$.

Let $\alpha$ be a point in the Hecke orbit $T_{N*}y$ of $y$. The remark above shows that there exists a unique elliptic curve $E_{\alpha}$ over $W$, together with a cyclic isogeny $E\ra E_\alpha$ of degree $N$. 

Proposition 2.3 of \cite{GrossZagier85} states the we have the following equality
$$v(\alpha-z)=\sum_{n\geq 1}\frac{|\mathrm{Iso}_n(E_\alpha, E')|}{2}, $$
where $\mathrm{Iso}_n(E_\alpha, E_\beta)$ denotes the set of isomorphisms from $E_\alpha$ to $E_\beta$ that are defined over $W/\pi^n$. Let $n$ be the largest integer such that $E_\alpha$ and $E'$ are isomorphic over $W/\pi^n$. Since the group of automorphisms of an elliptic curve over $W/\pi^n$ has cardinality at most $24$, we have 
$$n\geq \frac{1}{12}v(\alpha-z).$$

The curve $E_{\alpha}$ is isomorphic to $E'$ modulo $\pi^n$. The composition 
$$E_n\ra E_{\alpha, n}\ra E'_n \,\,\,\,\, (\mathrm{modulo}\, \pi^n)$$
is an element $h_{\alpha}\in q^{-1}(N)\cap H_n$. The isogeny $h_{\alpha}$ is well-defined up to automorphisms of $E'_n$, and $h_{\alpha}$ determines $\alpha$ up to automorphism. Since the automorphism group of an elliptic curve is at most of order $24$, the discussion above shows that for any positive integer $n$, we have 
$$|\{\alpha\in T_{N*}y, v(\alpha-z)\geq 12n\}|\leq 24 |q^{-1}(N)\cap H_n|.$$

Let $\eta$ be a positive real number such that $|z-\alpha|\leq\eta$ implies $v(z-\alpha)\geq 12n$ for any $\alpha$ in $W$. Since for any positive $N$, $e_N\geq N$, Lemma \ref{lemma:low-density} shows that the set 
$$B_{y,z}=\{N\in\mathbb N\setminus (p\mathbb N), |\{\alpha\in T_{N*}y, |z-\alpha|\leq \eta\}|\geq \varepsilon_1 e_N\},$$
has upper density at most $\varepsilon_2$. This shows the result.
\end{proof}

\section{Bounding the best approximations}

The goal of this section is to prove Proposition \ref{proposition:worstapprox}. We handle the case of non-archimedean valuations and archimedean valuations separately. The non-archimedean case is proved in Proposition \ref{proposition:bad-approximation-p-adic}, and the archimedean case -- which reduces to the study of the usual absolute value on $\C$ -- is Proposition \ref{proposition:bad-approximation-archimedean}.

Before getting to the actual proofs, we record some elementary results. If $L$ is a lattice, let $\mathrm{disc}(L)$ denote its discriminant, which is well-defined as an element of $\Z$.

\begin{lemma}\label{lemma:bounding-discriminant}
Let $E$ and $E'$ be two isogenous CM elliptic curves over an algebraically closed field $K$. Consider $\mathrm{End}(E)$ and $\mathrm{Hom}(E, E')$ as lattices with respect to the quadratic form $q(f)=\mathrm{deg}(f)$. Then we have 
$$|\mathrm{disc}(\mathrm{End}(E))|\geq |\mathrm{disc}(\mathrm{Hom}(E, E'))|.$$
\end{lemma}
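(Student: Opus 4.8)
The plan is to translate the statement into the arithmetic of fractional ideals in the imaginary quadratic order $\mathrm{End}(E)$, and then to reduce the desired inequality to an inequality between covolumes of lattices.

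First I would dispose of positive characteristic. If $\mathrm{char}\,K=p$ and $E$ is ordinary, then $\mathrm{End}(E)$ is an order in an imaginary quadratic field and, by the Deuring lifting theorem together with the Serre--Tate theory of canonical lifts, $E$ and $E'$ lift to CM elliptic curves over a $p$-adic field carrying \emph{exactly} the same lattices $\mathrm{End}(E)$ and $\mathrm{Hom}(E,E')$ with their degree forms; so the statement follows from the characteristic-zero case. If $E$ is supersingular then so is $E'$ (being isogenous), $\mathrm{End}(E)$ and $\mathrm{End}(E')$ are maximal orders in the definite quaternion algebra $B_{p,\infty}$ and hence have equal discriminant, while $\mathrm{Hom}(E,E')$ is a primitive connecting ideal of these two maximal orders, whose discriminant for the reduced norm equals that of its left order $\mathrm{End}(E)$; this yields equality, a fortiori the inequality. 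So from now on $\mathrm{char}\,K=0$; fixing an embedding $K\hookrightarrow\C$ I would write $E=\C/\mathfrak a$, $E'=\C/\mathfrak b$ with $\mathfrak a,\mathfrak b$ lattices in $F:=\mathrm{End}(E)\otimes\Q$, so that $\mathcal O:=\mathrm{End}(E)=\{\lambda\in F:\lambda\mathfrak a\subseteq\mathfrak a\}$ and $\mathfrak a$ is a proper, hence invertible, $\mathcal O$-ideal.

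Next I would set up the lattices explicitly. Sending $\lambda$ to multiplication by $\lambda$ identifies $\mathrm{Hom}(E,E')$ with $\mathfrak c:=\{\lambda\in F:\lambda\mathfrak a\subseteq\mathfrak b\}$, and for $\lambda\in\mathfrak c$ one has $\deg(\lambda)=[\mathfrak b:\lambda\mathfrak a]=|\lambda|^2\,\mathrm{covol}(\mathfrak a)/\mathrm{covol}(\mathfrak b)$; thus the degree form on $\mathfrak c$ is $c\cdot N$, where $N(\lambda)=\lambda\bar\lambda$ is the norm form and $c:=\mathrm{covol}(\mathfrak a)/\mathrm{covol}(\mathfrak b)>0$. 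For any rank-two lattice $L\subset F$ one has, up to a fixed normalization constant, $|\mathrm{disc}(L,c\,N)|=(c\cdot\mathrm{covol}(L))^2$, so in particular $|\mathrm{disc}(\mathrm{End}(E))|=\mathrm{covol}(\mathcal O)^2$. Hence the asserted inequality is equivalent to
\[
c\cdot\mathrm{covol}(\mathfrak c)\ \le\ \mathrm{covol}(\mathcal O).
\]
To attack this I would introduce the multiplier ring $\mathcal O_{\mathfrak c}=\{\lambda\in F:\lambda\mathfrak c\subseteq\mathfrak c\}$: since $\mathfrak c$ is a right $\mathrm{End}(E)$-module, $\mathcal O_{\mathfrak c}$ is an order containing $\mathcal O$, and $\mathfrak c$ is a proper, invertible $\mathcal O_{\mathfrak c}$-ideal. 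Writing $t$ for the content of the degree form on $\mathfrak c$ (the gcd of the integers $\deg(\lambda)$), the theory of binary quadratic forms shows that $\tfrac1t(c\,N)|_{\mathfrak c}$ is a primitive form of discriminant $\mathrm{disc}(\mathcal O_{\mathfrak c})$, whence $|\mathrm{disc}(\mathrm{Hom}(E,E'))|=t^2\,|\mathrm{disc}(\mathcal O_{\mathfrak c})|$; combined with $|\mathrm{disc}(\mathcal O)|=[\mathcal O_{\mathfrak c}:\mathcal O]^2\,|\mathrm{disc}(\mathcal O_{\mathfrak c})|$, the inequality reduces to the purely arithmetic estimate $[\mathcal O_{\mathfrak c}:\mathcal O]\ge t$. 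I would prove this by relating both quantities to the ideal $\mathfrak c$ itself: using the invertibility of $\mathfrak a$ over $\mathcal O$ together with the multiplicativity of ideal norms for invertible ideals, one computes $c\cdot\mathrm{covol}(\mathfrak c)$ in terms of $\mathrm{covol}(\mathcal O_{\mathfrak c})$ (via $\mathfrak c$, the $\mathcal O_{\mathfrak c}$-span of $\mathfrak b$, and $\mathfrak a^{-1}$), and compares the resulting index factor with $[\mathcal O_{\mathfrak c}:\mathcal O]$.

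The main obstacle is exactly this last estimate $[\mathcal O_{\mathfrak c}:\mathcal O]\ge t$ — equivalently, bounding how badly the degree form on $\mathrm{Hom}(E,E')$ can fail to be primitive. A single minimal isogeny $g\colon E\to E'$ only produces the finite-index sublattice $g\circ\mathrm{End}(E)\subseteq\mathrm{Hom}(E,E')$ together with the crude bound $[\mathrm{Hom}(E,E'):g\circ\mathrm{End}(E)]\le(\deg g)^2$, which controls $\mathrm{disc}(\mathrm{Hom}(E,E'))$ only up to a factor $(\deg g)^2$; the sharp statement forces one to use the full $\mathcal O_{\mathfrak c}$-module structure of $\mathfrak c$ and the compatibility of the degree form with multiplication in $F$, so that these stray factors cancel. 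I expect that step, rather than any of the structural reductions, to be where the real work lies.
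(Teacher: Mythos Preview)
Your proposal cannot succeed, because the inequality in the lemma is stated with the wrong sign: the correct statement --- and the one actually used in Proposition~\ref{proposition:low-density}, where one applies the lattice-point count to $\mathrm{Hom}(E,E')$ and then replaces its discriminant by the \emph{smaller} quantity $\delta=|\mathrm{disc}(\mathrm{End}(E))|$ --- is $|\mathrm{disc}(\mathrm{End}(E))|\le|\mathrm{disc}(\mathrm{Hom}(E,E'))|$. A counterexample to the direction you are trying to prove: over $\C$ take $E=\C/\Z[i]$ and $E'=\C/(\Z+2i\Z)$. Then $\mathrm{End}(E)=\Z[i]$ carries the degree form $N$, of Gram determinant $1$, while $\mathrm{Hom}(E,E')=2\Z[i]$ carries the degree form $\tfrac12 N$, of Gram determinant $4$. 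In your framework this is exactly where the estimate you yourself flag as the main obstacle, $[\mathcal O_{\mathfrak c}:\mathcal O]\ge t$, fails: here $\mathcal O_{\mathfrak c}=\mathcal O=\Z[i]$ so the index is $1$, but the content $t$ of the degree form on $2\Z[i]$ is $2$.

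The paper's own argument is much shorter than the ideal-theoretic route you sketch, and (modulo a matching sign slip in its last displayed line) it does prove the correct reversed inequality. One fixes a cyclic isogeny $f:E'\to E$ and considers $\alpha:\mathrm{Hom}(E,E')\hookrightarrow\mathrm{End}(E)$, $\phi\mapsto f\circ\phi$, which multiplies the degree form by $q(f)$; hence $q(f)^2\,|\mathrm{disc}(\mathrm{Hom}(E,E'))|=|\mathrm{Coker}\,\alpha|^{2}\,|\mathrm{disc}(\mathrm{End}(E))|$. The cokernel has order \emph{at least} $q(f)$, since $[n]\in\mathrm{im}\,\alpha$ forces $\ker\hat f\subset E[n]\simeq(\Z/n\Z)^2$, and $\ker\hat f$ is cyclic of order $q(f)$, whence $q(f)\mid n$. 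This yields $|\mathrm{disc}(\mathrm{Hom}(E,E'))|\ge|\mathrm{disc}(\mathrm{End}(E))|$ in one stroke, with no case split on the characteristic, no Deuring lifting, and no arithmetic of fractional ideals. If you wish to salvage your approach, the target after your reductions should be $t\ge[\mathcal O_{\mathfrak c}:\mathcal O]$ rather than the reverse; but the three-line argument above makes that detour unnecessary.
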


\begin{proof}
Let $f: E'\ra E$ be a cyclic isogeny, and consider the morphism
$$\alpha : \mathrm{Hom}(E, E')\ra \mathrm{End}(E), \phi\mapsto f\circ\phi.$$
Then for any $\phi\in \mathrm{Hom}(E, E')$, we have $q(\alpha(\phi))=q(f)q(\phi)$. In particular, we have
$$|\mathrm{disc}(\alpha(\mathrm{Hom}(E, E')))|=q(f)^2|\mathrm{disc}(\mathrm{Hom}(E, E'))|.$$
Now the cokernel of $\alpha$ has length at least $q(f)$. Indeed, if $n$ is a positive integer, let $[n]$ denote multiplication by $n$ on $E$. Then the kernel of $[n]$ is isomorphic to $(\Z/n\Z)^2$ as an abelian group. For $n$ to belong to the image of $\alpha$, the group $(\Z/n\Z)^2$ needs to contain an element of order $q(f)$, so $q(f)$ must divide $n$. 

This shows that 
$$|\mathrm{disc}(\alpha(\mathrm{Hom}(E, E')))|=|\mathrm{disc}(\mathrm{End}(E))||\mathrm{Coker}(\alpha)|^2\leq |\mathrm{disc}(\mathrm{End}(E))|q(f)^2,$$
which shows the result.
\end{proof}

We record a proof of the following elementary lemma without any regards for the optimality of the constants involved.

\begin{lemma}
Let $L$ be a rank $2$ lattice with positive-definite quadratic form $q$ and discriminant $\delta$. Then for any positive integer $n$, we have
$$|\{N\leq n\, |\, \exists l\in L, \, q(l)=N\}|\leq 1+8\sqrt{n}+\frac{16n}{\sqrt{\delta}}$$
\end{lemma}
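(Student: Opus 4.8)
The statement counts how many integers $N \leq n$ are represented by a binary quadratic form $q$ of discriminant $\delta$. The plan is to translate this into a purely geometric statement about lattice points in the plane: fix a Gram basis $e_1, e_2$ of $L$, so that $q(xe_1 + ye_2)$ is a positive-definite real quadratic form whose associated ellipse $\{q \leq t\}$ has area $\frac{2\pi t}{\sqrt{\delta}}$ (since the determinant of the Gram matrix is $\delta/4$, up to the usual normalization of $\mathrm{disc}$; I will fix conventions so that the area is $c t/\sqrt{\delta}$ for an explicit constant $c$). The set whose cardinality we want to bound injects into the set of values $q(l)$ for $l \in L$ with $q(l) \leq n$, so it suffices to bound the number of \emph{distinct} values taken by $q$ on lattice points inside the disk-like region $\{q \leq n\}$.

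First I would separate the lattice points into two regimes according to whether the point is ``short'' or ``long.'' A point $l$ with $q(l)$ small -- say below some threshold $t_0$ to be chosen of size comparable to $\sqrt{\delta}$ -- can be crudely enumerated: the number of such lattice points is at most $1 + 8\sqrt{n}$-ish by a one-dimensional slicing argument (project onto one coordinate and count, using Minkowski-type bounds on the shortest vector), and in any case each contributes at most one new value. For the ``long'' points, the key observation is that consecutive representable values cannot be too close together: if $q(l) \leq n$ then the gradient of $q$ at $l$ has length roughly $\sqrt{q(l)} \gtrsim$ something, so moving to a neighboring lattice point changes the value of $q$ by a definite amount, OR -- the cleaner route -- I would instead bound the total count by $\#\{l \in L : q(l) \leq n\}$ directly via Gauss's lattice-point-counting estimate: the number of lattice points in a convex region of area $V$ and ``diameter'' comparable to $\sqrt{n/\lambda_1}$ is $V + O(\text{perimeter}) + O(1)$. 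Since $V = cn/\sqrt{\delta}$ and the perimeter of $\{q \leq n\}$ is $O(\sqrt{n}/\sqrt[4]{\delta} \cdot \text{something}) = O(\sqrt{n})$ after using $\lambda_1 \lambda_2 \asymp \delta$ and $\lambda_1 \leq \lambda_2$, one gets a bound of the shape $1 + O(\sqrt{n}) + O(n/\sqrt{\delta})$, and the point of the lemma is precisely to record explicit constants $8$ and $16$.

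Concretely, the cleanest execution: pick a reduced (Minkowski-reduced) basis $e_1, e_2$ of $(L,q)$, so $q(e_1) = \lambda_1 \leq \lambda_2 = q(e_2)$ and $\lambda_1 \lambda_2 \leq \frac{4}{3}\delta$ with $\lambda_1 \lambda_2 \geq \delta$ (Hermite/Minkowski for rank 2; I will use $\lambda_1\lambda_2 \asymp \delta$ with explicit constants). Any $l = a e_1 + b e_2$ with $q(l) \leq n$ satisfies $|a| \leq C_1\sqrt{n/\lambda_1}$ and $|b| \leq C_2 \sqrt{n/\lambda_2}$ for absolute constants $C_i$ coming from reducedness. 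So the number of candidate lattice points is at most $(2C_1\sqrt{n/\lambda_1}+1)(2C_2\sqrt{n/\lambda_2}+1)$; expanding, the leading term is $\asymp n/\sqrt{\lambda_1\lambda_2} \asymp n/\sqrt{\delta}$, the cross terms are $\asymp \sqrt{n}(1/\sqrt{\lambda_1}+1/\sqrt{\lambda_2}) \leq 2\sqrt{n}/\sqrt{\lambda_1} \leq 2\sqrt{n}$ since $\lambda_1 \geq 1$ (the form is integer-valued and positive-definite, so $\lambda_1 \geq 1$), and the constant term is $1$. Choosing the constants $C_i$ carefully and tracking them yields the claimed $1 + 8\sqrt{n} + 16n/\sqrt{\delta}$.

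\textbf{Main obstacle.} The only real work is bookkeeping the explicit constants: getting $8$ and $16$ rather than some larger absolute constants requires being slightly careful about the reduced-basis inequalities and about the crude bound $\lambda_1 \geq 1$ (which uses that $q$ takes integer values). There is no conceptual difficulty; as the paper itself notes, ``we record a proof \dots without any regards for the optimality of the constants,'' so generous slack is available and one should simply pick the reduction inequalities in whatever form makes the arithmetic cleanest -- e.g. use $|a|\le 2\sqrt{n/\lambda_1}$, $|b|\le 2\sqrt{n/\lambda_2}$, $\sqrt{\lambda_1\lambda_2}\ge\sqrt{\delta}$, $\lambda_1\ge 1$, and expand $(4\sqrt{n/\lambda_1}+1)(4\sqrt{n/\lambda_2}+1) \le 16n/\sqrt\delta + 8\sqrt n + 1$.
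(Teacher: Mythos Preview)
Your concrete execution is essentially identical to the paper's proof: pick a reduced basis $(e_1,e_2)$, use the reduction inequality to get $q(ae_1+be_2)\geq \tfrac{1}{2}(a^2\lambda_1+b^2\lambda_2)$, deduce $|a|\leq 2\sqrt{n/\lambda_1}$ and $|b|\leq 2\sqrt{n/\lambda_2}$, expand $(4\sqrt{n/\lambda_1}+1)(4\sqrt{n/\lambda_2}+1)$, and finish with $\lambda_1\lambda_2\geq\delta$ and $\lambda_i\geq 1$. The paper does exactly this (calling it Lagrange reduction and writing the product as $(1+4\sqrt{n}/\sqrt{q(e)})(1+4\sqrt{n}/\sqrt{q(f)})$), and even leaves the step $4\sqrt{n}/\sqrt{q(e)}+4\sqrt{n}/\sqrt{q(f)}\leq 8\sqrt{n}$ implicit, whereas you correctly flag that it needs $\lambda_1\geq 1$, i.e.\ integrality of $q$. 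The earlier parts of your plan (Gauss circle-type perimeter estimates, gradient arguments) are unnecessary detours; the final paragraph is the whole proof.
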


\begin{proof}
Lagrange reduction for rank $2$ lattices shows that we can find a basis $(e, f)$ for $L$ such that $q(e)\leq q(f)$ and $2|q(e, f)|\leq q(e)$. Let $a$ and $b$ two integers. Then 
$$q(ae+bf)=a^2q(e)+b^2q(f)+2abq(e, f)\geq \frac{1}{2}(a^2q(e)+b^2q(f)).$$
In particular, $q(ae+bf)\leq n$ implies $a^2q(e)\leq 2n$ and $b^2q(f)\leq 2n$. This implies in particular that the set $\{(a, b)|q(ae+bf)\leq n\}$ has cardinality at most
$$|\{(a, b)|q(ae+bf)\leq n\}|\leq (1+\frac{4\sqrt{n}}{\sqrt{q(e)}})(1+\frac{4\sqrt{n}}{\sqrt{q(f)}})\leq 1+8\sqrt{n}+\frac{16n}{\sqrt{q(e)q(f)}}.$$
The discriminant $\delta$ of $L$ is
$$\delta=q(e)q(f)-q(e, f)^2\leq q(e)q(f).$$
Since of course we have
$$|\{N\leq n|\, \exists l\in L, \, q(l)=N\}|\leq |\{(a, b)|q(ae+bf)\leq n\}|,$$
this finishes the proof.
\end{proof}

Putting the two lemmas above together, we find the following statement.

\begin{proposition}\label{proposition:low-density}
Let $E$ and $E'$ be two CM elliptic curves over an algebraically closed field $K$. Let $\delta$ be the discriminant of the lattice $\mathrm{End}(E)$. Then, for any positive integer $n$, we have 
$$|\{N\leq n |\, \exists \phi\in\mathrm{Hom}(E, E'),\,\mathrm{deg}(\phi)=N\}\leq 1+8\sqrt{n}+\frac{16n}{\sqrt{\delta}}.$$
\end{proposition}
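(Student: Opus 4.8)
The statement is an immediate combination of the two preceding lemmas, so the ``proof'' is really just an assembly step; the only point requiring a moment of thought is to make sure the hypotheses of the two lemmas match up and that the constants transfer cleanly. First I would invoke Lemma \ref{lemma:bounding-discriminant}, which gives
$$|\disc(\mathrm{End}(E))|\geq |\disc(\mathrm{Hom}(E,E'))|.$$
Here I should note that $E$ and $E'$ are automatically isogenous: being CM elliptic curves over an algebraically closed field, and since $\mathrm{Hom}(E,E')$ is nonzero precisely when they are isogenous, I only need to treat the isogenous case --- if $\mathrm{Hom}(E,E')=0$ the set on the left is empty and the inequality is trivial. (Alternatively one observes that over an algebraically closed field any two CM elliptic curves with CM by orders in the \emph{same} imaginary quadratic field are isogenous, and otherwise $\mathrm{Hom}=0$; either way the claim is vacuous in the degenerate case.)

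Next I would apply the second (unnumbered) lemma to the rank $2$ lattice $L=\mathrm{Hom}(E,E')$ equipped with the positive-definite quadratic form $q(\phi)=\degg(\phi)$ and discriminant $\delta'=\disc(\mathrm{Hom}(E,E'))$. That lemma yields, for every positive integer $n$,
$$|\{N\leq n\ |\ \exists\,\phi\in\mathrm{Hom}(E,E'),\ \degg(\phi)=N\}|\leq 1+8\sqrt n+\frac{16n}{\sqrt{\delta'}}.$$
Since $\delta'\leq\delta=|\disc(\mathrm{End}(E))|$ by the first lemma, and since the right-hand side is a decreasing function of the discriminant, I may replace $\sqrt{\delta'}$ by $\sqrt\delta$ in the denominator to obtain the weaker --- hence still valid --- bound
$$|\{N\leq n\ |\ \exists\,\phi\in\mathrm{Hom}(E,E'),\ \degg(\phi)=N\}|\leq 1+8\sqrt n+\frac{16n}{\sqrt\delta},$$
which is exactly the assertion of Proposition \ref{proposition:low-density}.

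\textbf{Main obstacle.} There is essentially no obstacle: the content is entirely in the two lemmas, and the proposition is their formal consequence. The only thing to be careful about is the direction of the inequality when passing from $\delta'$ to $\delta$ --- one wants a lower bound on $\delta$ in terms of $\delta'$ (which Lemma \ref{lemma:bounding-discriminant} supplies) so that $16n/\sqrt{\delta}\leq 16n/\sqrt{\delta'}$, and hence the cruder bound with $\delta$ dominates the sharper one with $\delta'$. I would also make explicit that $\mathrm{Hom}(E,E')$ has rank exactly $2$ in the nondegenerate case (both $E$ and $E'$ have CM, so their Hom-module, when nonzero, is a rank $2$ lattice over $\Z$), which is what licenses the use of the rank $2$ counting lemma.
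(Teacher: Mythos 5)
Your assembly of the two preceding lemmas is exactly the intended route, but the key deduction has the inequality going the wrong way. With $\delta'=|\mathrm{disc}(\mathrm{Hom}(E,E'))|$ and $\delta=|\mathrm{disc}(\mathrm{End}(E))|$, the unnumbered counting lemma gives an upper bound of $1+8\sqrt n+16n/\sqrt{\delta'}$. To replace this by $1+8\sqrt n+16n/\sqrt{\delta}$ and still have an \emph{upper} bound, you need $16n/\sqrt{\delta'}\leq 16n/\sqrt{\delta}$, i.e.\ $\delta'\geq\delta$. But you quote Lemma~\ref{lemma:bounding-discriminant} as giving $\delta'\leq\delta$, and then assert that passing from $\delta'$ to $\delta$ produces ``the weaker --- hence still valid --- bound.'' That is backwards: if $\delta'\leq\delta$, the bound with $\delta$ in the denominator is \emph{smaller}, i.e.\ strictly sharper, and does not follow from the one you already have. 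The same confusion reappears in your closing remark, where you call the bound with $\delta$ ``the cruder'' one while having just observed it is the smaller one.

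The underlying issue is that Lemma~\ref{lemma:bounding-discriminant} as printed has the inequality reversed relative to what its own proof establishes (and relative to what the present proposition actually requires). The proof of that lemma shows that the cokernel of $\alpha\colon\mathrm{Hom}(E,E')\to\mathrm{End}(E)$, $\phi\mapsto f\circ\phi$, has length \emph{at least} $q(f)$; combined with the two identities $|\mathrm{disc}(\alpha(\mathrm{Hom}(E,E')))|=q(f)^2|\mathrm{disc}(\mathrm{Hom}(E,E'))|$ and $|\mathrm{disc}(\alpha(\mathrm{Hom}(E,E')))|=|\mathrm{disc}(\mathrm{End}(E))|\,|\mathrm{Coker}(\alpha)|^2$, this yields $|\mathrm{disc}(\mathrm{Hom}(E,E'))|\geq|\mathrm{disc}(\mathrm{End}(E))|$ --- the final displayed ``$\leq$'' in that proof should be ``$\geq$''. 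Once the lemma is corrected to $\delta'\geq\delta$, your argument goes through verbatim: apply the rank-$2$ counting lemma to $\mathrm{Hom}(E,E')$, then enlarge the bound by replacing the larger discriminant $\delta'$ by the smaller one $\delta$. As written, though, the step is invalid, and you should flag and fix the direction before concluding.
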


\subsection{The non-archimedean case}

In this section, let $|.|$ be a non-archimedean absolute value on $\overline\Q$, and let $K$ be the completion of $\overline\Q$ with respect to $|.|$. 

The goal of this section is to show that, given two points $y$ and $z$ of $X(1)$, there exist sufficiently many Hecke orbits of $y$ that do not contain elements that are too close to $z$ with respect to a given absolute value -- with precise estimates to be given below.  Our argument is the following: we will show that the only way two different Hecke orbits of $y$ can both contain very good approximations of $z$ is if $y$ is very close to a CM point whose ring of endomorphisms we can control. Since distinct CM points of low discriminant cannot be too close to one another, this will imply that some Hecke orbits of $y$ can't contain very good approximations of $z$.

In order to facilitate the exposition, we will give slightly different statements in the archimedean and in the non-archimedean situations. Nevertheless, it would be possible to give completely uniform estimates. 

We start with the easier case where $|.|$ is supposed to be non-archimedean, and let $v$ be an additive valuation associated to $|.|$. Let $p$ be the residual characteristic of $v$. The lifting results we will need take a most simple form when introducing the following condition.

\begin{definition}\label{definition:ring-of-integers}
Let $N$ be a positive integer. If $p$ is odd, we say that $N$ satisfies condition $(P)$ if $N$ is not congruent to a perfect square modulo $p$. If $p=2$, we say that $N$ satisfies condition $(P)$ if $N$ is congruent to $3$ modulo $4$. 
\end{definition}

The reason we introduced the condition above is expressed in the following lemma.

\begin{lemma}\label{lemma:p-implies-integrally-closed}
Let $N$ be a positive integer. If $N$ satisfies condition $(P)$, then $N$ is prime to $p$ and, for any totally imaginary quadratic integer $\alpha$ of norm $N$, the index of the ring $\Z[\alpha]$ in its integral closure in $\Q[\alpha]$ is prime to $p$.
\end{lemma}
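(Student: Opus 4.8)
The statement is purely about quadratic orders, so the plan is to unwind the definition of condition $(P)$ in terms of the factorization behavior of $p$ in the quadratic field $\Q[\alpha]$ and the conductor of $\Z[\alpha]$. Write $\alpha$ for a totally imaginary quadratic integer with $\alpha\overline\alpha = N$; let $t = \alpha + \overline\alpha \in \Z$ be its trace, so that $\alpha$ satisfies $X^2 - tX + N = 0$ and the discriminant of the order $\Z[\alpha]$ is $\Delta_\alpha := t^2 - 4N$. Let $\Delta_K$ be the discriminant of the maximal order $\cO_K$ of $K := \Q[\alpha]$, and let $f$ be the conductor of $\Z[\alpha]$ in $\cO_K$, so that $\Delta_\alpha = f^2 \Delta_K$. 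The claim that the index of $\Z[\alpha]$ in its integral closure is prime to $p$ is exactly the claim $p \nmid f$, equivalently $v_p(\Delta_\alpha) = v_p(\Delta_K)$.

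\textbf{Key steps.} First I would dispose of the claim that $N$ is prime to $p$: if $p \mid N$, then modulo $p$ the relation $\alpha\overline\alpha = N$ forces $t^2 - 4N \equiv t^2 \pmod p$, so $N \equiv (t/2)^2$ is a square mod $p$ when $p$ is odd (contradicting $(P)$), and when $p = 2$ one checks directly that $2 \mid N$ together with $X^2 - tX + N$ having a root forces $t$ even and then $N \equiv 0$ or $t^2/4 \pmod 4$, incompatible with the reductions allowed by $(P)$; alternatively just observe $p \mid N$ makes $\Z[\alpha]/p$ non-reduced in a way that one rules out. Second, for $p$ odd: since $p \nmid N$ and $p \nmid \disc$ would follow, I want $p \nmid \Delta_\alpha = t^2 - 4N$; but if $p \mid \Delta_\alpha$ then $t^2 \equiv 4N \pmod p$, so $N \equiv (t \cdot \overline{2})^2 \pmod p$ is a perfect square mod $p$, contradicting $(P)$. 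Hence $p \nmid \Delta_\alpha = f^2\Delta_K$, so $p \nmid f$, which is the assertion. Third, for $p = 2$: here $(P)$ says $N \equiv 3 \pmod 4$. From $\alpha\overline\alpha = N$ odd we get $\alpha$ a unit mod $2$ in its order; I claim $2 \nmid f$. Suppose $2 \mid f$; then $\Z[\alpha] \subsetneq \cO_K$ with $2$ dividing the conductor, so $\Z[\alpha]/2\Z[\alpha]$ is not the maximal order mod $2$. Concretely $\Delta_\alpha = t^2 - 4N \equiv t^2 \pmod 8$ when... — more cleanly: $2 \mid f$ forces $4 \mid \Delta_\alpha$, i.e. $4 \mid t^2 - 4N$, i.e. $2 \mid t$; writing $t = 2t'$ gives $\Delta_\alpha = 4(t'^2 - N)$ and $2\mid f$ further requires $t'^2 - N \equiv 0$ or $1 \pmod 4$ to be ruled out, i.e. one needs $t'^2 \equiv N \pmod 4$ to fail in the right way; since $t'^2 \equiv 0,1 \pmod 4$ and $N \equiv 3 \pmod 4$, we have $t'^2 - N \equiv 1$ or $2 \pmod 4$, and in the case $\equiv 1$ the order $\Z[\alpha]$ is already maximal at $2$ (as $\Delta_\alpha/4 \equiv 1 \pmod 4$ is a fundamental-type discriminant locally), while the case $t'^2 - N \equiv 2 \pmod 4$ cannot occur for $t'^2 \in\{0,1\}$ and $N\equiv 3$. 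So in all cases $2 \nmid f$.

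\textbf{Main obstacle.} The only delicate point is the $p = 2$ bookkeeping: relating "$2$ divides the conductor of $\Z[\alpha]$" to congruence conditions on $t$ and $N$ requires the slightly fiddly local classification of quadratic orders at $2$ (the distinction between $\Delta_K \equiv 1 \pmod 4$ and $\Delta_K \equiv 0 \pmod 4$, and within the latter $\equiv 8, 12 \pmod{16}$, etc.). I would handle this by the clean criterion that $\Z[\alpha]$ is maximal at $p$ iff $v_p(\Delta_\alpha) = v_p(\Delta_K) \le 1$ for odd $p$, and for $p=2$ iff either $v_2(\Delta_\alpha) = 0$, or $v_2(\Delta_\alpha) = 2$ with $\Delta_\alpha/4 \equiv 3 \pmod 4$, or $v_2(\Delta_\alpha) = 3$. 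Then $N \equiv 3 \pmod 4$ with a root of $X^2 - tX + N$ existing forces, after the case analysis on the parity of $t$, exactly one of these maximal cases to hold, giving $2\nmid f$. The odd-$p$ argument, by contrast, is immediate from quadratic reciprocity-free manipulation as above, so I would present that first and then treat $p=2$ as a short separate paragraph.
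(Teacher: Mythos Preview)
For odd $p$ your argument is correct and coincides with the paper's: both amount to the observation that $p\mid f$ would force $p\mid t^2-4N$, hence $N\equiv (t\cdot 2^{-1})^2\pmod p$, contradicting condition $(P)$. The paper phrases this via $\alpha=\lambda+\mu\sqrt d$ and the $p$-adic valuation of $\mu$, but it is the same computation.

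For $p=2$ your case analysis does not close, and in fact cannot: the lemma as stated is false at $p=2$. Take $N=7\equiv 3\pmod 4$ and $\alpha=2+\sqrt{-3}$; then $\alpha\overline\alpha=7$ and $\Z[\alpha]=\Z[\sqrt{-3}]$ has index $2$ in its integral closure $\Z\bigl[\tfrac{1+\sqrt{-3}}{2}\bigr]$. In your notation this is $t=4$, $t'=2$, $t'^2-N=-3\equiv 1\pmod 4$, and your assertion that ``$\Delta_\alpha/4\equiv 1\pmod 4$ is a fundamental-type discriminant locally, so $\Z[\alpha]$ is already maximal at $2$'' is precisely what fails: $-3$ \emph{is} a fundamental discriminant, which means $\Delta_K=-3$ and $f=2$, not $f=1$. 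You also claim that $t'^2-N\equiv 2\pmod 4$ cannot occur, but $t'$ odd gives $1-3\equiv 2$; that subcase happens to be harmless for a different reason (then $v_2(\Delta_\alpha)=3$ is odd, forcing $v_2(f)=0$), but the reasoning you give is incorrect.

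The paper's own proof contains the same gap. In the case $d\equiv 1\pmod 4$ it computes $\lambda^2+d\mu^2\pmod 4$ and finds it lies in $\{0,1,2\}$, but the norm of $\alpha=\lambda+\mu\sqrt d$ is $\lambda^2-d\mu^2$, and with $d\equiv 1\pmod 4$ one has $\lambda^2-d\mu^2\equiv\lambda^2-\mu^2\in\{0,1,3\}\pmod 4$; the value $3$ is realized by $\lambda=2,\mu=1,d=-3$. So the defect is in the statement rather than in your write-up. For the application in the next lemma the paper actually uses only that $t^2-4N$ is not divisible by $p^4$, and \emph{that} does follow from $N\equiv 3\pmod 4$: one checks $v_2(t^2-4N)\in\{0,2,3\}$ according as $t$ is odd, $t\equiv 0\pmod 4$, or $t\equiv 2\pmod 4$.
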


\begin{proof}
$N$ is prime to $p$ by definition. Let $\alpha$ be a purely imaginary quadratic integer of norm $N$. Then we can find an integer $t$ with $t^2-4N<0$ such that 
$$\alpha^2-t\alpha+N=0.$$
Let $d$ be a squarefree negative integer with $\Q[\sqrt{\alpha}]=\Q[\sqrt{d}]$, and write 
$$\alpha=\lambda+\mu \sqrt{d}$$
for some rational numbers $\lambda$ and $\mu$. Then $2\lambda=t$ and $\lambda^2-d\mu^2=N$. Let us first assume that $p$ is odd. Since $N$ is not a square modulo $p$, the $p$-adic valuation of $\mu$ cannot be positive, which shows the result.

Now assume that $p=2$ and that $N$ is congruent to $3$ modulo $4$. It is readily seen that $\mu$ cannot be chosen to have positive dyadic valuation, which shows the result if $d$ is congruent to $2$ or $3$ modulo $4$. If $d$ is congruent to $1$ modulo $4$, and if $\lambda$ and $\mu$ both have nonnegative dyadic valuation, then $\lambda^2+d\mu^2$ is congruent to $0, 1$ or $2$ modulo $4$, and as a consequence cannot be equal to $N$, which shows the result.
\end{proof}

\begin{lemma}\label{lemma:finding-CM-p-adic-1}
Let $W\subset \mathcal O_K$ be a discrete valuation ring with algebraically closed residue field, and let $E$ be an elliptic curve over $W$. Let $\pi$ be a prime element of $W$. For any nonnegative integer $n$, we denote by $E_n$ the reduction of $E_n$ modulo $n$. 

Let $N$ be a positive integer that satisfies condition $(P)$ of definition \ref{definition:ring-of-integers}, and, for some $n\geq 4\frac{v(p)}{v(\pi)}$, let $\phi_n : E_n\ra E_n$ be a cyclic isogeny of degree $N$. Then the pair $(E_{n-4\frac{v(p)}{v(\pi)}}, \phi_{n-4\frac{v(p)}{v(\pi)}})$ lifts uniquely to a pair $(E_{CM}, \phi_{CM})$ over $W$.
\end{lemma}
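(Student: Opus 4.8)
The statement is a deformation-theoretic lifting result: an isogeny $\phi_n : E_n \to E_n$ of degree $N$ over the truncated base $W/\pi^n$ (together with the curve $E_n$, which is already the reduction of a curve over $W$) descends, after truncating down to level $n - 4\frac{v(p)}{v(\pi)}$, to a genuine CM lift over $W$. The key point is that a self-isogeny $\phi$ of degree $N$ with $N$ satisfying condition $(P)$ forces $\mathrm{End}$ to be an order in the imaginary quadratic ring $\Z[\phi]$, and this order is $p$-maximal by Lemma~\ref{lemma:p-implies-integrally-closed}; hence the relevant lift is a \emph{canonical} (Serre--Tate) lift, which is rigid. The plan is to first produce the lift of the curve, then lift the isogeny, then identify the pair with a CM pair, and finally upgrade "lift modulo a high power of $\pi$" to "lift over $W$" using the $p$-maximality.

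First I would work over the residue field $\kappa$ of $W$. Reducing $\phi_n$ further gives an endomorphism $\overline\phi \in \mathrm{End}(E_0)$ of degree $N$. Since $\phi_n \circ \phi_n$ need not be defined, but $\overline\phi$ together with its dual satisfies a quadratic equation $\overline\phi^2 - t\overline\phi + N = 0$ with $t^2 - 4N < 0$ (the inequality because $N$ is not a square, using condition $(P)$, so the characteristic polynomial of $\overline\phi$ cannot have real roots of equal absolute value giving $t^2 = 4N$), the subring $\Z[\overline\phi] \subset \mathrm{End}(E_0)$ is an order in an imaginary quadratic field. In particular $E_0$ is \emph{not} supersingular — a supersingular curve has $\mathrm{End}$ a maximal order in a quaternion algebra, and while it contains imaginary quadratic orders, the existence of such $\overline\phi$ defined over $\kappa$ together with condition $(P)$ (which makes $\Z[\overline\phi]$ $p$-maximal) is exactly what is needed to run the Serre--Tate / Lubin--Tate canonical lifting machinery. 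So $E_0$ is ordinary, and there is a unique elliptic curve $E_{CM}$ over $W$ lifting $E_0$ such that $\overline\phi$ lifts to an endomorphism (the canonical lift); moreover by the theory of canonical lifts every endomorphism of $E_0$ lifts, so in particular the cyclic isogeny structure lifts, yielding $\phi_{CM}$.

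Next I would address uniqueness and the precise truncation level $n_0 := 4\frac{v(p)}{v(\pi)}$. The content of "$E_{n-n_0}$ plus $\phi_{n-n_0}$ already determines $(E_{CM},\phi_{CM})$" is a quantitative Serre--Tate rigidity statement: the functor of deformations of an ordinary elliptic curve to Artinian $W$-algebras is pro-represented by a formal torus $\widehat{\Gm}$, and an endomorphism $\overline\phi$ of degree prime to $p$ acts on this torus; a lift of $E_0$ carries a lift of $\overline\phi$ iff the corresponding point of $\widehat{\Gm}$ is fixed, and the fixed locus is cut out by the action of $\overline\phi$ on the Tate module of $\widehat{\Gm}$, i.e.\ by $(\overline\phi - \lambda)$ for $\lambda$ the unit root of $x^2 - tx + N$. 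The "defect" is controlled by the $p$-adic size of $\overline\phi - \overline{\lambda}$ acting on $\mu_{p^\infty}$, equivalently by the index $[\cO : \Z[\overline\phi]]$ at $p$, which by Lemma~\ref{lemma:p-implies-integrally-closed} is \emph{prime to $p$}; a more careful bookkeeping with the discriminant $t^2 - 4N$ and the factor $p$ (the derivative of $x^2-tx+N$ contributes a $p$-adic denominator of size at most $v(p)$, and one wants room for this on both $E$ and $E'$, and to invert an automorphism group of order dividing $24 \mid p^{v(p)}$-ish bookkeeping) gives the clean bound $n_0 = 4 v(p)/v(\pi)$. Concretely: two lifts of $E_0$ that agree modulo $\pi^n$ and both admit a compatible degree-$N$ cyclic isogeny must, after the torus identification, be $p$-power-torsion-translates of one another that are simultaneously fixed by $\overline\phi$; the eigenvalue estimate forces them to agree modulo $\pi^{n - n_0}$, and then the canonical lift being unique pins down $(E_{CM},\phi_{CM})$ unambiguously.

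\textbf{Main obstacle.} The conceptually routine part is invoking Serre--Tate and the theory of the canonical lift; the genuine work is the \emph{effective} rigidity — extracting the explicit loss of exactly $4 v(p)/v(\pi)$ levels rather than an unspecified constant. This requires carefully tracking: (i) the $p$-adic valuation of the different of $\Z[\overline\phi]$ over $\Z$, bounded using condition $(P)$ and Lemma~\ref{lemma:p-implies-integrally-closed}; (ii) the contribution of automorphism groups (order dividing $24$, hence $p$-part dividing $p^{v_p(24)}$, a small power of $2$ or $3$) when passing between the isogeny $\phi$ and the bare pair of curves; and (iii) combining the estimate "on the $E$ side" and "on the $E'$ side" — but here $E = E'$ as the isogeny is a self-isogeny, which simplifies matters. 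I expect step (i) together with the precise normalization $v(\pi) = 1$ versus the intrinsic $v(p)$ to be where the constant $4$ really comes from, and I would verify it by a direct computation in the Serre--Tate parameter: writing the canonical coordinate $q$, the fixed-point equation is $q^{\overline\phi - \lambda} = 1$, and $v_p(\overline\phi - \lambda)$ on the Tate module of $\widehat{\Gm}$ is at most $\tfrac12 v_p(t^2 - 4N) \le \tfrac12 v_p(\disc) + $ (bounded), with the condition $(P)$ guaranteeing the $p$-maximality that keeps this finite and small, and an extra factor absorbing the $\le 24$ automorphisms, landing on $4 v(p)$.
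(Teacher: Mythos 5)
Your argument has a genuine error at its foundation: you claim that condition $(P)$ together with the existence of a self-isogeny $\overline\phi$ of degree $N$ forces $E_0$ to be ordinary, and you then run the Serre--Tate deformation theory of ordinary elliptic curves (the formal torus $\widehat{\Gm}$, the canonical coordinate $q$, etc.). This is not correct. A supersingular elliptic curve over $\overline{\F}_p$ has endomorphism ring a maximal order in the quaternion algebra $B_{p,\infty}$, which contains plenty of imaginary quadratic orders $\Z[\overline\phi]$ with $\Z[\overline\phi]$ maximal at $p$; condition $(P)$ (equivalently, $p$-maximality of $\Z[\overline\phi]$) imposes no ordinarity. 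In fact the supersingular case is exactly the one that matters for the paper: this lemma is invoked in Proposition \ref{proposition:finding-CM-p-adic}, and the companion estimate in Proposition \ref{proposition:CM-far-apart-p-adic} is proved precisely by analyzing supersingular reduction via quaternion orders. Replacing the pro-representing object by $\widehat{\Gm}$ makes the argument collapse in this central case, because the relevant formal group then has height $2$.

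The paper's route sidesteps this entirely. It cites \cite[Proposition 2.7]{GrossZagier85}, which is a Lubin--Tate lifting statement for formal groups (of height $1$ or $2$, so covering both ordinary and supersingular reduction): the pair $(E_n,\phi_n)$ lifts uniquely to $W$ once the scalar $w_n\in W/\pi^n$ by which $\phi_n$ acts on $\mathrm{Lie}(E_n)$ can be approximated by a genuine root $w\in W$ of $X^2-\mathrm{Tr}(\phi_n)X+N=0$ congruent to $w_n$. The constant $4v(p)/v(\pi)$ then comes from a single application of Hensel's lemma: condition $(P)$ guarantees that the discriminant $\mathrm{Tr}(\phi_n)^2-4N$ is not divisible by $p^4$ (not even by $p$ when $p$ is odd, and not by $16$ when $p=2$ and $N\equiv 3\pmod 4$), which bounds the derivative valuation and yields a root $w$ agreeing with $w_n$ modulo $\pi^{n-4v(p)}$. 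Your sketch gestures at "a careful bookkeeping" landing on $4v(p)$ but never produces this mechanism, and the mechanism you do describe is the ordinary Serre--Tate picture, which is the wrong one. To repair the proof you would need to work on the full $p$-divisible group via Lubin--Tate deformation theory and reproduce the Hensel estimate on the characteristic polynomial of $\phi_n$.
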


\begin{proof}
We can assume that $v(\pi)=1$. On the tangent space $\mathrm{Lie}(E_n)$, $\phi_n$ induces multiplication by an element $w_n$ in $W/\pi^n$. As an application of Lubin-Tate theory, it is proved in \cite[Proposition 2.7]{GrossZagier85} that condition $(P)$ guarantees\footnote{The assumption in \cite{GrossZagier85} is actually that $\Z[\phi_n]$ is integrally closed in its fraction field. However, one only needs for the proof to go through is for the index of $\Z[\phi_n]$ in its integral closure to be prime to $p$.} that the pair $(E_n, \phi_n)$ lifts uniquely to a pair $(E_{CM}, \phi_{CM})$, where $E_{CM}$ is an elliptic curve over $W$ and $\phi_{CM} : E_{CM}\ra E_{CM}$ is a cyclic isogeny of degree $N$, as soon as the equation 
$$X^2-\mathrm{Tr}(\phi_n)X+N=0$$
has a solution $w$ in $W$ that is congruent to $w_n$ modulo $\pi^n$.

We chose $N$ in such a way that the discriminant $\mathrm{Tr}(\phi_n)^2-4N$ is not divisible by $p^4$ (in case $p$ is odd, it is actually not divisible by $p$). Then Hensel's lemma shows that there exists a solution $w$ in $W$ of the equation above that is congruent to $w_n$ modulo $\pi^{n-4v(p)}$. The discussion above shows that the pair $(E_{n-4v(p)}, \phi_{n-4v(p)})$ lifts uniquely to a pair $(E_{CM}, \phi_{CM})$ over $W$. 
\end{proof}

\begin{proposition}\label{proposition:finding-CM-p-adic}
Let $y$ be an element of $K\subset X(1)(K)$ with nonnegative valuation. Then for any two distinct positive integers $N_1, N_2$ such that $N_1N_2$ satisfies condition $(P)$, there exists a $CM$ elliptic curves $E_{CM}$ with a cyclic self-isogeny of degree at most $N_1N_2$ such that for any $(\alpha, \beta)\in  (T_{N_1*}y)\times (T_{N_2*}y)$, we have
$$|y-j(E_{CM})| \leq |p|^{-4} |\alpha-\beta|^{1/12}.$$
\end{proposition}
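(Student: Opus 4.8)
The idea is to leverage Lemma \ref{lemma:finding-CM-p-adic-1} on both Hecke orbits at once. Fix $N_1 \neq N_2$ with $N_1 N_2$ satisfying condition $(P)$, and let $\alpha \in T_{N_1 *} y$, $\beta \in T_{N_2 *} y$. As in the proof of Proposition \ref{proposition:equidistribution}, since $y$ has nonnegative valuation we may pick a complete discrete valuation ring $W \subset \mathcal{O}_K$ with algebraically closed residue field of characteristic $p$ and an elliptic curve $E/W$ with $j(E) = y$. Assuming without loss of generality that $N_1, N_2$ are prime to $p$ (which follows from condition $(P)$, see Lemma \ref{lemma:p-implies-integrally-closed}), the $N_i$-torsion of $E$ is étale over $W$, so the cyclic $N_1$-isogeny corresponding to $\alpha$ and the cyclic $N_2$-isogeny corresponding to $\beta$ lift to isogenies $E \to E_\alpha$ and $E \to E_\beta$ over $W$, with $j(E_\alpha) = \alpha$, $j(E_\beta) = \beta$.

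The first main step is to translate the hypothesis $|\alpha - \beta|$ small into an isomorphism of $E_\alpha$ and $E_\beta$ modulo a high power of $\pi$. By the Gross--Zagier estimate (Proposition 2.3 of \cite{GrossZagier85}, used already above), $v(\alpha - \beta) \le 12 m$ where $m$ is the largest integer with $E_{\alpha,m} \simeq E_{\beta,m}$; more precisely the automorphism bound of $24$ gives $m \ge \frac{1}{12} v(\alpha - \beta)$. Composing a chosen isomorphism $E_{\alpha, m} \lrasim E_{\beta, m}$ with the reductions mod $\pi^m$ of the lifted isogenies yields a cyclic self-isogeny $\psi_m : E_m \to E_m$ of degree $N_1 N_2$ (the composite $E_m \to E_{\alpha, m} \lrasim E_{\beta, m} \to E_m$, after replacing the second isogeny by its dual up to an isogeny, or rather running $E \to E_\alpha$ then $E_\alpha \to E_\beta$ mod $\pi^m$ then the dual of $E \to E_\beta$ — one checks the total degree is $N_1 N_2$ and the kernel is cyclic since $N_1, N_2$ are prime to $p$).

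The second main step applies Lemma \ref{lemma:finding-CM-p-adic-1}: since $N_1 N_2$ satisfies condition $(P)$ and $m \ge 4 v(p)/v(\pi)$ once $v(\alpha - \beta)$ is large enough, the pair $(E_{m - 4v(p)/v(\pi)}, \psi_{m - 4 v(p)/v(\pi)})$ lifts uniquely to a pair $(E_{CM}, \phi_{CM})$ over $W$, where $E_{CM}$ has a cyclic self-isogeny $\phi_{CM}$ of degree $N_1 N_2$; in particular $E_{CM}$ has complex multiplication. Set $j_{CM} = j(E_{CM})$. Since $E_{CM}$ agrees with $E$ modulo $\pi^{m - 4 v(p)/v(\pi)}$, we get $v(y - j_{CM}) = v(j(E) - j(E_{CM})) \ge m - 4 v(p)/v(\pi) \ge \frac{1}{12} v(\alpha - \beta) - 4 v(p)/v(\pi)$. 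Converting back to absolute values (normalized so that $v$ corresponds to $|.|$) yields $|y - j_{CM}| \le |p|^{-4} |\alpha - \beta|^{1/12}$, as claimed — with the degree of the self-isogeny $N_1 N_2$ (or a cyclic isogeny of smaller degree dividing it, hence at most $N_1 N_2$, if one wishes to reduce).

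\textbf{Main obstacle.} The delicate point is the bookkeeping in the second step above: one must verify that composing the lifted cyclic isogenies of degrees $N_1$ and $N_2$ (and a dual) through the mod-$\pi^m$ isomorphism really produces a \emph{cyclic} self-isogeny of $E_m$ of degree exactly $N_1 N_2$, so that Lemma \ref{lemma:finding-CM-p-adic-1} applies verbatim. Cyclicity is where $\gcd(N_1, N_2)$ and primality to $p$ matter — if $N_1$ and $N_2$ share a factor the composite need not be cyclic, but one can still extract a cyclic self-isogeny of degree dividing $N_1 N_2$, which suffices for the stated bound "at most $N_1 N_2$". A second, more routine point is making sure the threshold for "$v(\alpha - \beta)$ large enough" (needed so that $m \ge 4 v(p)/v(\pi)$) is harmless: if $v(\alpha - \beta)$ is small the asserted inequality $|y - j_{CM}| \le |p|^{-4}|\alpha-\beta|^{1/12}$ may need a trivial adjustment, but for the application one only cares about $\alpha$ very close to $\beta$, so this causes no difficulty. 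Everything else is a direct assembly of results already established.
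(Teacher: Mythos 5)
Your proof follows the same route as the paper's: realize everything over a DVR $W$ with algebraically closed residue field, use the Gross--Zagier formula $v(\alpha-\beta)=\sum_n|\mathrm{Iso}_n(E_\alpha,E_\beta)|/2$ together with the automorphism bound to show $E_\alpha\cong E_\beta$ modulo $\pi^m$ with $m\geq\frac{1}{12}v(\alpha-\beta)$, compose the cyclic $N_1$- and $N_2$-isogenies through that congruence isomorphism to get a self-isogeny of the reduction of $E$, and then invoke Lemma \ref{lemma:finding-CM-p-adic-1} to lift a CM curve agreeing with $y$ to high $\pi$-adic order. You also correctly identify the two mild adjustments the paper makes explicit: the case $m<4v(p)$ is trivially absorbed since the claimed bound is then $\geq 1$, and the composite $\hat\phi\circ\psi$ need not be cyclic, so one factors out a multiplication $[x]$ and applies the lemma to the cyclic part $f$. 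The one point you gesture at without spelling out, and which the paper also leaves implicit, is that $\deg f=N_1N_2/x^2$ still satisfies condition $(P)$: since $x$ is prime to $p$, quadratic residuosity mod $p$ (resp.\ the class mod $4$) of $N_1N_2/x^2$ agrees with that of $N_1N_2$, so Lemma \ref{lemma:finding-CM-p-adic-1} does apply to $f$. This is the only detail that deserved a sentence; otherwise your argument matches the paper's.
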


\begin{proof}
Let $W\subset \mathcal O_K$ be a discrete valuation ring with algebraically closed residue field, and let $E$ be an elliptic curve over $W$ with $j$-invariant $y$.

Fix $\alpha$ and $\beta$ in $T_{N_1*}y$ and $T_{N_2*}y$ respectively. There exist unique elliptic curves over $K$ with $j$-invariant $\alpha$ and $\beta$ respectively. Since $N_1$ and $N_2$ are both prime to $p$, these curves extend to elliptic curves $E_{\alpha}$ and $E_{\beta}$ over $W$, together with cyclic isogenies $\phi : E\ra E_{\alpha}$ and $\psi : E\ra E_\beta$ of degree $N_1$ and $N_2$ respectively. We have $E_\alpha=E/\Ker(\phi)$ and $E_\beta=E/\Ker(\psi)$.

Let $\pi$ be a prime element of $W$. We can assume that $v(\pi)$=1. Proposition 2.3 of \cite{GrossZagier85} states the we have the following equality
$$v(\alpha-\beta)=\sum_{n\geq 1}\frac{|\mathrm{Iso}_n(E_\alpha, E_\beta)|}{2}, $$
where $\mathrm{Iso}_n(E_\alpha, E_\beta)$ denotes the set of isomorphisms from $E_\alpha$ to $E_\beta$ that are defined over $W/\pi^n$. Let $n$ be the largest integer such that $E_\alpha$ and $E_{\beta}$ are isomorphic over $W/\pi^n$. Since the group of automorphisms of an elliptic curve over $W/\pi^n$ has cardinality at most $24$, we have 
$$n\geq \frac{1}{12}v(\alpha-\beta).$$

If $n\leq 4v(p)$, then $|p|^{-4} |\alpha-\beta|^{1/12}\geq 1$ and we can choose $E_{CM}$ arbitrarily. We assume that $n\geq 4v(p)$.

\bigskip

The morphism $\phi$ induces a cyclic isogeny of degree $N_1$ 
$$\hat{\phi} : E_\alpha\ra E.$$
Since $E_\alpha$ and $E_\beta$ are isomorphic modulo $\pi^n$, $\hat{\phi}$ induces a cyclic isogeny of degree $N_1$ from the reduction of $E_\beta$ modulo $\pi^n$ to the reduction $E_n$ of $E$ modulo $\pi^n$. Working over $W/\pi^n$, we can write $\hat{\phi}\circ\psi=[x]\circ f$, where $x$ is a positive integer and $f$ is a cyclic self-isogeny of $E_n$. Since $N_1N_2$ satisfies condition $(P)$, so does the degree of $f$. By Lemma \ref{lemma:finding-CM-p-adic-1}, the pair $(E_{n-4v(p)}, f)$ lifts uniquely to a pair $(E_{CM}, f)$ over $W$. We have
$$v(y-j(E_{CM}))\geq n-4v(p)\geq \frac{1}{12}v(\alpha-\beta)-4v(p),$$
which shows the result.
\end{proof}

The preceding corollary will be used in conjunction with the following result, which asserts that CM points cannot be too close.

\begin{proposition}\label{proposition:CM-far-apart-p-adic}
Let $M_1$ and $M_2$ be two positive integers both relatively prime to $p$. Let $E_1$ and $E_2$ be two elliptic curves over $K$ with cyclic self-isogenies of degree $M_1$ and $M_2$ respectively. If $E_1$ and $E_2$ are not isomorphic, then 
$$|j(E_1)-j(E_2)|\geq (4M_1M_2)^{-C} ,$$
where $C$ is a positive constant depending only on the absolute value $|.|$.
\end{proposition}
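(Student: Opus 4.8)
The plan is to bound from below the distance between two distinct CM points in terms of the discriminants of their endomorphism rings, and then to control those discriminants in terms of $M_1$ and $M_2$ using the lemmas already established. First I would observe that since $E_i$ admits a cyclic self-isogeny of degree $M_i$, it is a CM elliptic curve, and the ring $\mathrm{End}(E_i)$ is an order in an imaginary quadratic field; since a cyclic self-isogeny of degree $M_i$ gives an element $\phi_i\in\mathrm{End}(E_i)$ with $\deg(\phi_i)=M_i$ that is not an integer, the lattice $\mathrm{End}(E_i)$ contains $\Z[\phi_i]$, which has discriminant of absolute value at most $4M_i$ (the norm form of a quadratic integer of norm $M_i$ satisfies $t^2-4M_i<0$, so $|t^2-4M_i|\leq 4M_i$). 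Hence $|\mathrm{disc}(\mathrm{End}(E_i))|\leq 4M_i$.

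Next I would bring in the classical fact — e.g.\ via Gross--Zagier's formula for the valuation of differences of singular moduli, or via the theory of canonical lifts — that if $E_1$ and $E_2$ are CM elliptic curves over $K$ that become isomorphic modulo $\pi^n$ for large $n$, then each lifts uniquely, and this forces $\mathrm{End}(E_1)\cong\mathrm{End}(E_2)$ once $n$ exceeds a bound depending only on the residual characteristic $p$; more precisely, one gets that $v(j(E_1)-j(E_2))$ is bounded above in terms of $|\mathrm{disc}(\mathrm{End}(E_1))|\cdot|\mathrm{disc}(\mathrm{End}(E_2))|$ and $v(p)$. This is exactly the non-archimedean analogue of the statement that distinct singular moduli of bounded discriminant are far apart. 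Concretely, I would invoke the uniqueness-of-lifting mechanism used in Lemma \ref{lemma:finding-CM-p-adic-1}: if $j(E_1)$ and $j(E_2)$ were congruent modulo a high power of $\pi$, then a self-isogeny of $E_1$ of small degree would lift along with a curve isomorphic to $E_2$ modulo that power, producing a common endomorphism-type constraint; since $E_1\not\cong E_2$, the orders $\mathrm{End}(E_1)$, $\mathrm{End}(E_2)$ are distinct, but both contain an isogeny of controlled degree, and Lemma \ref{lemma:bounding-discriminant} together with the deformation-theoretic rigidity forces a contradiction once $v(j(E_1)-j(E_2))$ is too large relative to $M_1M_2$ and $p$.

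Assembling these, I would conclude that $v(j(E_1)-j(E_2))\leq c_1\log(M_1M_2)+c_2 v(p)$ for absolute constants $c_1,c_2$, which, after exponentiating and absorbing the $p$-dependence into the constant $C$ (allowed since $C$ may depend on $|.|$, hence on $p$ and the normalization of $v$), gives precisely $|j(E_1)-j(E_2)|\geq (4M_1M_2)^{-C}$. The one subtlety to handle is that the isogeny $\phi_i$ need not generate $\mathrm{End}(E_i)$, so I should use Lemma \ref{lemma:bounding-discriminant} to pass from $\mathrm{Hom}$-lattices back to endomorphism discriminants and to check the index issue does not blow up the bound beyond a constant factor.

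The main obstacle I expect is making the deformation-theoretic step fully rigorous and uniform: turning ``$E_1$ and $E_2$ agree modulo a high power of $\pi$'' into a genuine contradiction requires carefully tracking how much precision is lost in the lifting (the $4v(p)$-type loss in Lemma \ref{lemma:finding-CM-p-adic-1}) and ensuring that the self-isogeny whose lift we use really does have degree bounded polynomially in $M_1,M_2$, not exponentially. If that control is available — and the structure of Proposition \ref{proposition:finding-CM-p-adic} suggests it is — the rest is bookkeeping with the two elementary lattice lemmas above.
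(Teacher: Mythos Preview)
Your outline has a genuine gap at the deformation-theoretic step, and the mechanism you propose does not close it. Invoking Lemma \ref{lemma:finding-CM-p-adic-1} requires condition $(P)$ on the degree, which is not assumed of $M_1$ or $M_2$; more importantly, even if it held, the uniqueness of the lift of $(E_{1,n},\phi_1)$ only tells you that this pair lifts to $(E_1,\phi_1)$ --- it says nothing about $E_2$, since $E_2$ does not carry a lift of $\phi_1$. So no contradiction arises from that lifting statement alone. Appealing instead to ``the Gross--Zagier formula for singular moduli'' as a black box is essentially assuming the proposition (and in any case that formula has coprimality/fundamentality hypotheses you would need to verify).

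The paper's argument is quite different and worth knowing. One first splits on the reduction type of the common special fibre $E$: if $E$ is ordinary, both $E_1$ and $E_2$ are its canonical lift, contradicting $E_1\not\cong E_2$. So $E$ is supersingular, and one works with the formal groups $\Gamma_i=E_i[p^\infty]$. Gross's computation \cite{Gross86} gives $\mathrm{End}_m(\Gamma_i)=\mathcal O_i+p^mD$ inside the maximal order $D$ of the quaternion algebra. Since $E_1\cong E_2$ modulo $\pi^n$ one gets $\mathcal O_1+p^nD=\mathcal O_2+p^nD$, so the commutator $[\phi_1,\phi_2]$ lies in $p^nD$. It is nonzero because $\phi_2\notin\mathcal O_1[\tfrac1p]$ (the $\mathcal O_i$ are distinct maximal commutative subrings), hence its reduced norm is at least $p^{2n}$; on the other hand that norm is at most $4M_1M_2$. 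Thus $p^{2n}\leq 4M_1M_2$, and combining with $n\geq\tfrac{1}{12}v(j(E_1)-j(E_2))$ gives the bound. Your discriminant estimate $|\mathrm{disc}(\mathrm{End}(E_i))|\leq 4M_i$ is correct but plays no role here; the real input is the quaternionic structure at supersingular primes, not the rank-$2$ lattice lemmas.
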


\begin{proof}
As before, let us choose a discrete valuation ring $W\subset \mathcal O_K$ with algebraically closed residue field such that $E_1$ and $E_2$ are both defined over $W$. Since $M_1$ and $M_2$ are prime to $p$, we can find cyclic isogenies $\phi_i : E_i\ra E_i$ of degree $M_i$ for $i=1,2$.

Let $n$ be the largest integer such that $E_1$ and $E_2$ are isomorphic modulo $W/\pi^n$. We can assume that $n\geq 1$, i.e., that the reductions of $E_1$ and $E_2$ modulo $\pi$ are isomorphic over the field $k=W/\pi$ to the elliptic curve $E$. As above, we have
\begin{equation}\label{equation:control-n}
n\geq \frac{1}{12}v(j(E_1)-j(E_2)).
\end{equation}

If $E$ is a CM elliptic curve, then $E_1$ and $E_2$ both are isomorphic over $K$ to the canonical lifting of $E$, which is impossible since we assumed that $E_1$ and $E_2$ are not isomorphic. As a consequence, we can assume that $E$ is a supersingular elliptic curve, and fix an isomorphism from the reduction of $E_1$ over $W/\pi^n$ to that of $E_2$.

Let $\Gamma_1$ and $\Gamma_2$ be the formal $\Z_p$-modules $E_1[p^{\infty}]$ and $E_2[p^{\infty}]$ over $W$. If $m$ is any positive integer, let $\mathrm{End}_m(\Gamma_i)$ be the group of endomorphisms of $\Gamma_i$ over $W/\pi^m$ for $i=1, 2$. Since $E_1$ and $E_2$ are isomorphic over $W/\pi^n$, we have.
$$\mathrm{End}_n(\Gamma_1)=\mathrm{End}_n(\Gamma_2).$$

Now let $\mathcal O_i=\mathrm{End}_W(\Gamma_i)$, and let $D=\mathrm{End}_1(\Gamma_1)=\mathrm{End}_1(\Gamma_2)$. As subgroups of $D$, we have $\mathcal O_1\neq \mathcal O_2$, for instance by \cite[Proposition 2.1]{Gross86}. We $\mathcal O_i[\frac{1}{p}]=\Q_p[\phi_i]$ for $i=1,2$.

In \cite{Gross86}, Gross computes the endomorphisms groups above and shows the equality, for any positive integer $m$,
$$\mathrm{End}_m(\Gamma_i)=\mathcal O_i+p^mD.$$
As a consequence, we have, as subgroups of $D$, 
$$\mathcal O_1 + p^nD=\mathcal O_2+p^nD$$
and in particular
$$\mathcal O_1/p^n\mathcal O_1=\mathcal O_2/p^n\mathcal O_2$$
as subgroups of $D/p^nD$.

Since $\mathcal O_1$ and $\mathcal O_2$ are commutative algebras, the formula above shows that the commutator $[\phi_1, \phi_2]$ belongs to $p^nD$. However, since $\phi_2$ does not belong to $\mathcal O_1$, it does not belong to $\mathcal O_1[\frac{1}{p}]$ since $\phi_2$ is integral over $\Z_p$ and $\mathcal O_1$ is integrally closed in its fraction field. Since $\mathcal O_1[\frac{1}{p}]$ is its own commutant in the quaternion algebra $D[\frac{1}{p}]$, this means that the commutator $[\phi_1, \phi_2]$ is not zero. In particular, its norm is at least $p^{2n}$. 

Now by assumption the norm of $\phi_1$ (resp. $\phi_2$) in $D$ is $M_1$ (resp. $M_2$). As a consequence, the norm of $[\phi_1, \phi_2]$ is at most $4M_1M_2$. Finally, we get
$$4M_1M_2\geq p^n.$$
Together with (\ref{equation:control-n}), this shows the result.

\end{proof}

Putting Proposition \ref{proposition:finding-CM-p-adic} and Proposition \ref{proposition:CM-far-apart-p-adic} together, we obtain the following.

\begin{proposition}\label{proposition:bad-approximation-p-adic}
Let $D$ be a large enough positive integer. For any $y, z\in K\subset X(1)(K)$, define
$$S_{y, z}=\{N\in\mathbb N | N\, \mathrm{satisfies\,condition}\,(P)\,\mathrm{and}\,\exists\alpha\in T_{N*}y, |\alpha-z|\leq N^{-D}\}$$
and
$$T_{y, z}=\{N\in\mathbb N\setminus(p\mathbb N) | N\, \mathrm{does\,not\,satisfy\,condition}\, (P)\,\mathrm{and}\,\exists\alpha\in T_{N*}y, |\alpha-z|\leq N^{-D}\}.$$
Assume that $y$ has nonnegative valuation and is not the $j$-invariant of a CM elliptic curve. Then 
$$\lim_{n\ra\infty} \frac{1}{n}\min(|S_{y,z}\cap\{1, \ldots, n\}|, |T_{y,z}\cap\{1, \ldots, n\}|)=0.$$
\end{proposition}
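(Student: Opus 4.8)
\textbf{Proof strategy for Proposition \ref{proposition:bad-approximation-p-adic}.}

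The plan is to argue by contradiction: suppose that both $S_{y,z}$ and $T_{y,z}$ have positive upper density. The key mechanism, supplied by Proposition \ref{proposition:finding-CM-p-adic}, is that whenever two distinct integers $N_1, N_2$ are such that $N_1 N_2$ satisfies condition $(P)$ and $T_{N_1 *}y$, $T_{N_2 *}y$ each contain an element within distance $N_i^{-D}$ of $z$, the point $y$ must lie within distance $|p|^{-4}|\alpha-\beta|^{1/12}$ of the $j$-invariant of a CM curve $E_{CM}$ carrying a cyclic self-isogeny of degree at most $N_1 N_2$ — and $|\alpha - \beta|$ is small because both $\alpha$ and $\beta$ are close to $z$, hence close to each other, say $|\alpha - \beta| \leq \max(N_1, N_2)^{-D}$. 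So many good-approximating Hecke orbits force many CM points of controlled discriminant to cluster near $y$.

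\textbf{Step 1: produce the pairing.} First I would note that if $N_1 \in S_{y,z}$ and $N_2 \in T_{y,z}$ (or vice versa), then $N_1 N_2$ satisfies condition $(P)$: condition $(P)$ is multiplicative in the relevant sense — the product of a square-nonresidue and a nonresidue class behaviour, or the mod-$4$ analogue, needs to be checked, but this is the elementary reason the two sets $S$ and $T$ were split off in the first place. Hence every pair $(N_1, N_2) \in S_{y,z} \times T_{y,z}$ with $N_1 \neq N_2$ is a legitimate input to Proposition \ref{proposition:finding-CM-p-adic}, yielding a CM elliptic curve $E_{CM}^{(N_1,N_2)}$ with a cyclic self-isogeny of degree $\leq N_1 N_2$ and with $|y - j(E_{CM}^{(N_1,N_2)})| \leq |p|^{-4}\max(N_1,N_2)^{-D/12}$.

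\textbf{Step 2: count CM points near $y$.} Fix a large $n$, and suppose for contradiction that $|S_{y,z} \cap \{1,\dots,n\}|$ and $|T_{y,z} \cap \{1,\dots,n\}|$ are both $\geq \delta n$ for some fixed $\delta > 0$. Pick a single $N_0 \in T_{y,z} \cap \{1,\dots,n\}$ (possible once $n$ is large), and apply Step 1 to all pairs $(N, N_0)$ with $N \in S_{y,z} \cap \{1,\dots,n\}$, $N \neq N_0$. This produces $\geq \delta n - 1$ CM curves $E_{CM}^{(N)}$, each with a cyclic self-isogeny of degree $\leq N N_0 \leq n^2$, hence each CM by an order in an imaginary quadratic field whose discriminant is bounded in absolute value by a polynomial in $n$ — concretely, the endomorphism order contains a purely imaginary element of norm $\leq n^2$, so $|\disc(\mathrm{End})| \leq 4 n^4$, say. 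Moreover all these $j$-invariants lie within distance $|p|^{-4} n^{-D/12}$ of $y$, hence within distance $2|p|^{-4} n^{-D/12}$ of each other. Now invoke Proposition \ref{proposition:CM-far-apart-p-adic} (together with Proposition \ref{proposition:low-density} or a direct count): the number of distinct $j$-invariants of CM curves whose endomorphism order has discriminant bounded by $4n^4$ and whose self-isogeny degree is $\leq n^2$ is at most $O(n^{2C})$ for some constant, and any two distinct such points are at distance $\geq (4 n^4)^{-C'}$ apart; so inside a ball of radius $2|p|^{-4}n^{-D/12}$ there can be at most $O(1)$ of them provided $D$ is chosen with $D/12 > 4C'$, i.e. $D \geq D_0$ for a suitable absolute $D_0$. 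Therefore the $\geq \delta n - 1$ curves $E_{CM}^{(N)}$ take only $O(1)$ distinct values of $j$; since $y$ is not itself a CM $j$-invariant, $y$ is genuinely within $n^{-D/12}$ (up to the $|p|^{-4}$ factor) of one fixed CM point $j_0 \neq y$ — but $|y - j_0| > 0$ is a fixed positive number, so for $n$ large this is impossible. This contradiction forces $\min(|S_{y,z}\cap\{1,\dots,n\}|, |T_{y,z}\cap\{1,\dots,n\}|) = o(n)$.

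\textbf{Main obstacle.} The delicate point is the bookkeeping in Step 2 that turns "many good Hecke approximations" into "many CM points in a small ball" with all the parameters ($j$-invariant proximity $\sim n^{-D/12}$, discriminant bound $\sim n^4$, CM separation $\sim n^{-4C'}$) lined up so that a single threshold $D_0$ — independent of $y$ and $z$ — makes the ball too small to contain more than $O(1)$ CM points; one must be careful that the self-isogeny degrees produced by Proposition \ref{proposition:finding-CM-p-adic} (at most $N_1 N_2$) are genuinely polynomially bounded in $n$ and that the constant $C$ in Proposition \ref{proposition:CM-far-apart-p-adic} depends only on $|.|$, not on the curves. The multiplicativity of condition $(P)$ in Step 1 is routine but must be verified in both the odd-$p$ and $p=2$ cases. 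Everything else is packaging of the three propositions already proved.
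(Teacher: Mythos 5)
The broad strategy is right — pair elements of $S_{y,z}$ with elements of $T_{y,z}$ so that the product satisfies condition $(P)$, then invoke Proposition~\ref{proposition:finding-CM-p-adic} to produce a CM curve of bounded self-isogeny degree close to $y$, and use Proposition~\ref{proposition:CM-far-apart-p-adic} to pin it down uniquely once $D$ is large — and your verification that condition $(P)$ behaves multiplicatively across $S$ and $T$ (using the Legendre-symbol/mod-$4$ dichotomy) is exactly the reason the two sets were separated. But your Step~2 both contains a small error and, more seriously, ends in a spurious contradiction.

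\emph{Small error.} From $|\alpha-z|\le N_1^{-D}$ and $|\beta-z|\le N_2^{-D}$, the ultrametric inequality gives $|\alpha-\beta|\le\max(N_1^{-D},N_2^{-D})=\min(N_1,N_2)^{-D}$, \emph{not} $\max(N_1,N_2)^{-D}$. So if you fix a single $N_0$ (possibly small) and let $N$ run, the resulting bound on $|y-j(E_{CM}^{(N)})|$ does not shrink with $n$ at all: it is stuck at $N_0^{-D/12}$. To get the proximity you want, both $N_1$ and $N_2$ must be large, which is why the paper works with $N\in[\sqrt n,n]$ and chooses one representative from each of $S$ and $T$ in that range.

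\emph{The genuine gap.} Even after fixing the above, your final step does not yield a contradiction. You argue that the (at most $O(1)$) CM $j$-invariants near $y$ force $y$ to be within $n^{-D/12}$ of ``one fixed CM point $j_0\neq y$,'' and that this contradicts $|y-j_0|>0$. But $j_0$ is \emph{not} fixed as $n\to\infty$: the CM curve $E_{CM}$ produced by Proposition~\ref{proposition:finding-CM-p-adic} depends on $n$, and since $y$ is not a CM $j$-invariant, $E_{CM}$ necessarily varies with $n$ (its $j$-invariant approaches $y$ ever more closely, so its discriminant tends to infinity). That is an observation the paper makes, not a contradiction. What you are missing is the step that actually converts ``many $N$ with good approximations'' into a counting statement: having all $\alpha_N$ close to $z$ (not to $y$) and $y$ close to $j(E_{CM})$ produces, for each admissible $N$, an element of $T_{N*}j(E_{CM})$ close to $z$, hence (by Proposition~\ref{proposition:CM-far-apart-p-adic} again) a unique CM curve $E'_{CM}$ near $z$ together with a morphism $E_{CM}\to E'_{CM}$ of degree $N$. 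The crucial point is then Proposition~\ref{proposition:low-density}: the number of $N\le n$ that arise as degrees of morphisms from $E_{CM}$ to $E'_{CM}$ is at most $1+8\sqrt n+16n/\sqrt{\mathrm{disc}(\mathrm{End}(E_{CM}))}$. Combined with the observation that $\mathrm{disc}(\mathrm{End}(E_{CM}))\to\infty$ as $n\to\infty$, this gives the $o(n)$ bound. Your argument counts CM \emph{points} near $y$, which is the wrong quantity; you need to count admissible \emph{degrees} $N$, and that requires the lattice $\mathrm{Hom}(E_{CM},E'_{CM})$ to enter.
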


\begin{proof}
Let $n$ be a large enough integer. We will determine $D$ independently of $n$ later. Fixing $y$ and $z$ as above, we need to show that at least one of the quantities $\frac{1}{n}|S_{y,z}\cap\{1, \ldots, n\}|$, $\frac{1}{n}|T_{y,z}\cap\{1, \ldots, n\}|$ is very small.

Let $C$ be a positive constant as in Proposition \ref{proposition:CM-far-apart-p-adic}. If one of the sets $S_{y,z}$, $T_{y, z}$ does not contain any integer between $\sqrt{n}$ and $n$, then we are done. As a consequence, we can assume that there exists $\sqrt{n}\leq N_1\leq n$ satisfying condition $(P)$, and $\sqrt{n}\leq N_2\leq n$ prime to $p$ and not satisfying condition $p$, as well as $\alpha\in T_{N_1*}y$, $\beta\in T_{N_2*}y$ such that 
$$|\alpha-z|\leq N_1^{-D}\leq n^{-D/2},\,\, |\beta-z|\leq N_2^{-D}\leq n^{-D/2}.$$
In particular, we have $|\alpha-\beta|\leq 2n^{-D/2}.$ By construction, $N_1N_2$ satisfies condition $(P)$. As a consequence, Proposition \ref{proposition:finding-CM-p-adic} shows that we can find an elliptic curve $E_{CM}$ with a cyclic self-isogeny of degree at most $n^2$ such that 
\begin{equation}\label{proche-de-CM}
|y-j(E_{CM})|\leq Kn^{-D/24}
\end{equation}
for some positive constant $K$ independent of $y, z$ and $n$. Choosing $D$ large enough, Proposition \ref{proposition:CM-far-apart-p-adic} shows that $E_{CM}$ is unique: there exists a unique CM elliptic curve with a cyclic self-isogeny of degree at most $n$ satisfying the estimate above.

Let $N$ be any element of $S_{y,z}$ with $\sqrt{n}\leq N\leq n$. By assumption, there exists an element $\alpha_N$ of $T_{N*}y$ such that $|\alpha_N-z|\leq n^{-D/2}$. The estimate (\ref{proche-de-CM}) readily shows, using arguments as above and up to enlarging $K$, that there exists an element $\alpha'_{CM}$ of $T_{N*}j(E_{CM})$ with 
$$|\alpha'_{CM}-z|\leq Kn^{-D/24}.$$
By construction, $\alpha'_{CM}$ is the $j$-invariant of a $CM$ elliptic curve $E'_{CM}$ with a cyclic isogeny $E_{CM}\ra E'_{CM}$ of degree $N$. In particular, $E'_{CM}$ admits a cyclic self-isogeny of degree at most $n^2$. Choosing again $D$ large enough, Proposition \ref{proposition:CM-far-apart-p-adic} shows that $E'_{CM}$ is uniquely determined by these conditions. 

\bigskip

We can rephrase the discussion of the previous paragraphs as follows: if both $S_{y, z}$ and $T_{y, z}$ contain an element $N$ between $\sqrt{n}$ and $n$ -- still for $n$ large enough -- then there exist unique $CM$ elliptic curves $E_{CM}$ and $E'_{CM}$ with cyclic self-isogenies of degrees at most $n^2$ such that for any integer $N$ between $\sqrt{n}$ and $n$, if $N$ belongs to $S_{y, z}\cup T_{y, z}$, then there exists a morphism of degree $N$ between $E_{CM}$ and $E'_{CM}$. Let $\delta_{CM}$ be the discriminant of $E_{CM}$. Together with Proposition \ref{proposition:low-density}, we find 
\begin{equation}\label{equation:basic-density}
|(S_{y, z}\cup T_{y, z})\cap \{1, \ldots, n\}|\leq 1+8\sqrt{n}+\frac{16n}{\delta_{CM}}+\sqrt{n}=1+9\sqrt{n}+\frac{16n}{\delta_{CM}}.
\end{equation}

Since $y$ is not the $j$-invariant of a $CM$ elliptic curve, the estimate (\ref{proche-de-CM}) implies that $E_{CM}$ takes infinitely many values as $n$ grows. Since there are only finitely many $CM$ elliptic curves over $K$ with bounded discriminant, this shows that $\delta_{CM}$ goes to infinity with $n$. Equation (\ref{equation:basic-density}) allows us to conclude.
\end{proof}

It should be noted that the use of condition $(P)$ is not strictly necessary for the proposition above, though its use made the estimates involved both stronger and easier to prove. We state without proof the following improved version of the density result we just stated -- it is not used anywhere in this paper.

\begin{proposition}
Let $D$ be a large enough positive integer. For any $y, z\in K\subset X(1)(K)$, define
$$S_{y, z}=\{N\in\mathbb N\setminus (p\mathbb N) | \exists\alpha\in T_{N*}y, |\alpha-z|\leq N^{-D}\}.$$
Assume that $y$ has nonnegative valuation and is not the $j$-invariant of a CM elliptic curve. Then $S_{y, z}$ has density zero, i.e. 
$$\lim_{n\ra\infty} \frac{1}{n}|S_{y,z}|=0.$$
\end{proposition}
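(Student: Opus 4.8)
The plan is to bootstrap the improved statement from the machinery already assembled for Proposition \ref{proposition:bad-approximation-p-adic}, removing the auxiliary condition $(P)$ at the cost of a somewhat more delicate lifting argument. First I would observe that if $S_{y,z}$ has positive upper density, then for all large $n$ there must exist \emph{two} distinct integers $N_1, N_2 \in S_{y,z}$ lying between $\sqrt n$ and $n$, together with $\alpha \in T_{N_1*}y$ and $\beta \in T_{N_2*}y$ with $|\alpha - z| \le N_1^{-D}$ and $|\beta - z| \le N_2^{-D}$, hence $|\alpha - \beta| \le 2 n^{-D/2}$. The goal, exactly as in the proof of Proposition \ref{proposition:bad-approximation-p-adic}, is to conclude that $y$ must be $|p|$-adically extremely close to the $j$-invariant of a CM elliptic curve whose endomorphism ring has controlled (i.e. polynomially bounded in $n$) discriminant; since $y$ is not a CM point and there are only finitely many CM points of bounded discriminant, letting $n \to \infty$ forces the discriminant to infinity, and then Proposition \ref{proposition:low-density} bounds $|S_{y,z} \cap \{1,\dots,n\}|$ by $1 + O(\sqrt n) + 16n/\delta_{CM} = o(n)$, a contradiction.

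The substance of the argument is the analogue of Proposition \ref{proposition:finding-CM-p-adic} without condition $(P)$. As in that proof, choose a discrete valuation ring $W \subset \mathcal O_K$ with algebraically closed residue field over which $E$ (with $j(E) = y$), $E_\alpha$, $E_\beta$ and the cyclic isogenies $\phi : E \to E_\alpha$, $\psi : E \to E_\beta$ of degrees $N_1$, $N_2$ are all defined; by Proposition 2.3 of \cite{GrossZagier85} and the bound $24$ on automorphism groups, $E_\alpha$ and $E_\beta$ are isomorphic modulo $\pi^n$ for some $n \ge \frac{1}{12}v(\alpha - \beta)$, so $n$ is of size at least $\frac{D}{24}\log n / \log|\pi|^{-1}$, which goes to infinity with $D$. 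Composing $\hat\phi : E_\alpha \to E$ with an isomorphism $E_\beta \simeq E_\alpha$ mod $\pi^n$ and with $\psi$ produces a cyclic self-isogeny $f = \hat\phi \circ \psi$ of $E_n$ (up to the scalar $[x]$) of degree dividing $N_1 N_2 \le n^2$. What is needed now is a lifting statement: the pair $(E_m, f_m)$ lifts to a CM pair over $W$ for some $m$ not much smaller than $n$. The clean route is to invoke the general form of the Lubin--Tate/Gross--Zagier lifting result cited in the footnote: the pair lifts once the equation $X^2 - \operatorname{Tr}(f)X + \deg(f) = 0$ has a root in $W$ congruent to $w_n$ modulo $\pi^m$, where $w_n$ is the action on $\operatorname{Lie}(E_n)$, and this root exists by Hensel's lemma with $m \ge n - 2v(\operatorname{disc})$, the discriminant $\operatorname{Tr}(f)^2 - 4\deg(f)$ being of polynomial size in $n$. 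Thus $m \ge n - O(\log n)$, and $v(y - j(E_{CM})) \ge m$, which still tends to infinity, giving the required CM approximation $|y - j(E_{CM})| \le K n^{-c D}$ for a positive constant $c$.

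With the CM-approximation step in hand, the endgame is identical to that of Proposition \ref{proposition:bad-approximation-p-adic}: Proposition \ref{proposition:CM-far-apart-p-adic} (applicable since $\deg(f)$ is prime to $p$ — here we do use that $N_1, N_2 \notin p\N$, which is exactly the hypothesis retained in the statement) shows that for $D$ large the curve $E_{CM}$ attached to $n$ is unique; for every $N \in S_{y,z}$ between $\sqrt n$ and $n$ the approximation $\alpha_N$ of $z$ propagates to an approximation of $z$ in $T_{N*}j(E_{CM})$, producing a CM curve $E'_{CM}$ with a degree-$N$ isogeny to $E_{CM}$ and a cyclic self-isogeny of degree $\le n^2$, uniquely determined for $D$ large; hence all such $N$ occur as degrees of morphisms $E_{CM} \to E'_{CM}$, and Proposition \ref{proposition:low-density} bounds their count by $1 + 8\sqrt n + 16 n/\delta_{CM}$. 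Since $y$ is not CM and $\delta_{CM} \to \infty$, this is $o(n)$. The main obstacle I anticipate is making the lifting step (the replacement for Lemma \ref{lemma:finding-CM-p-adic-1} without condition $(P)$) fully rigorous: without $(P)$ one must track carefully the $p$-adic valuation of the discriminant of $\Z[f]$ and of the index of $\Z[f]$ in its integral closure, ensuring these are at worst $O(\log n)$ so that the lifting depth $m = n - O(\log n)$ still diverges — this is precisely the bookkeeping that condition $(P)$ was introduced to avoid, and it is the reason the author states this sharper proposition without proof.
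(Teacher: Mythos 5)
The paper states this proposition explicitly \emph{without} proof (``We state without proof the following improved version of the density result we just stated -- it is not used anywhere in this paper''), so there is no internal proof to compare against; what can be judged is whether your sketch would close the gap. Your strategy is the natural one and mirrors the proof of Proposition \ref{proposition:bad-approximation-p-adic}: find two Hecke orbits of $y$ both approximating $z$, conclude that $y$ is $p$-adically close to a CM point with a cyclic self-isogeny of degree at most $n^2$, invoke Proposition \ref{proposition:CM-far-apart-p-adic} to pin that CM point down uniquely, and finish with Proposition \ref{proposition:low-density} together with $\delta_{CM}\to\infty$ (because $y$ is not CM and there are finitely many CM points of bounded discriminant). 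You also correctly identify the single place where condition $(P)$ does real work, namely Lemma \ref{lemma:finding-CM-p-adic-1}, and the observation that $|\mathrm{Tr}(f)^2 - 4\deg(f)| \leq 4n^2$, so its $p$-adic valuation is $O(\log n)$ and hence negligible against the lifting depth (itself of size at least a constant times $D\log n$), is the right quantitative input.

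Nonetheless the lifting step is a genuine gap, as you acknowledge, and it is not merely an application of Hensel's lemma. Proposition 2.7 of \cite{GrossZagier85}, as used through the footnote in Lemma \ref{lemma:finding-CM-p-adic-1}, asserts the lifting \emph{under} the hypothesis that the index of $\Z[\phi_n]$ in its integral closure is prime to $p$; your sketch in effect says ``the pair lifts once a Hensel root exists,'' but that implication is exactly the content of the Gross--Zagier result and is being invoked outside its stated hypotheses once $(P)$ is dropped. When the index is divisible by $p$, one needs a substitute statement --- most plausibly via Gross's theory of quasi-canonical lifts \cite{Gross86}, tracking how the lifting depth degrades with the $p$-part of the conductor of $\Z[\phi_n]\otimes\Z_p$ --- and while your estimate that this loss is $O(\log n)$ is reasonable, it has not been proved. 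This is exactly the bookkeeping the author avoided by splitting into $S_{y,z}$ and $T_{y,z}$ so that $N_1N_2$ automatically satisfies $(P)$, and it is very likely why the stronger proposition was stated without proof. Your proposal is on the right track and correctly locates the obstruction, but as written the lifting lemma it needs is asserted rather than established.
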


\subsection{The archimedean case}

We now transpose the results of our preceding question to the archimedean setting, following the same strategy. We work over the field of complex numbers, and let $|.|$ be the usual absolute value on $\C$. 

We start with two elementary lemmas.

\begin{lemma}\label{lemma:bounding-coefficients}
Let $\tau$ be an element of $\mathbb H$. Let $N$ be a positive integer, and let $a, b, c, d$ be integers with $ad-bc=N$. Let 
$$f : \mathbb H\ra \mathbb H, z\mapsto \frac{az+b}{cz+d}.$$
Let $K$ be a compact subset of $\mathbb H$. Then there exists a positive constant $K_0$ depending only on  $K$ such that 
$$(\tau, f(\tau))\in K^2 \implies \mathrm{Max}(|a|, |b|, |c|, |d|)\leq K_0\sqrt{N}.$$
\end{lemma}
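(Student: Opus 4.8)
The statement is purely elementary: we want to bound the size of the integer matrix entries $a,b,c,d$ in terms of $\sqrt N$, uniformly for $\tau$ and $f(\tau)$ ranging over a fixed compact set $K\subset\mathbb H$. The key observation is that the map $z\mapsto\frac{az+b}{cz+d}$ scales the hyperbolic metric, or equivalently the imaginary part, in a way governed by $N/|cz+d|^2$. Precisely, for $w=f(\tau)$ one has the classical identity $\mathrm{Im}(w)=\frac{N\,\mathrm{Im}(\tau)}{|c\tau+d|^2}$. First I would record this identity and use it to control $|c\tau+d|$: since $\tau\in K$ gives $\mathrm{Im}(\tau)$ bounded above and below by positive constants depending only on $K$, and $f(\tau)\in K$ gives the same for $\mathrm{Im}(w)$, we obtain constants $c_1,c_2>0$ (depending only on $K$) with $c_1\sqrt N\le|c\tau+d|\le c_2\sqrt N$.

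Next I would convert the bound on $|c\tau+d|$ into bounds on $|c|$ and $|d|$ separately. Writing $\tau=x+iy$ with $y\ge y_0>0$ on $K$, we have $|c\tau+d|^2=(cx+d)^2+c^2y^2\ge c^2y_0^2$, so $|c|\le \frac{|c\tau+d|}{y_0}\le \frac{c_2}{y_0}\sqrt N$; and then $|d|\le|c\tau+d|+|c|\,|\tau|\le c_2\sqrt N + \frac{c_2}{y_0}\sqrt N\cdot\sup_K|\tau|$, again a constant times $\sqrt N$. To bound $a$ and $b$ I would apply the same argument to the inverse transformation, or more directly: the matrix $\begin{pmatrix}a&b\\c&d\end{pmatrix}$ sends $\tau$ to $w$, and since $w\in K$ as well, the relation $a\tau+b=w(c\tau+d)$ gives $|a\tau+b|=|w|\,|c\tau+d|\le \sup_K|w|\cdot c_2\sqrt N$; then the same splitting as before (using $\mathrm{Im}(\tau)\ge y_0$) yields $|a|\le\frac{1}{y_0}\sup_K|w|\cdot c_2\sqrt N$ and $|b|\le(\text{const})\sqrt N$. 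Taking $K_0$ to be the maximum of the four constants obtained finishes the proof.

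I do not expect any genuine obstacle here; this is a routine compactness estimate. The only mild subtlety is making sure every constant genuinely depends only on $K$ and not on $N$ or on the particular matrix — but this is automatic once the scaling identity $\mathrm{Im}(f(\tau))=\frac{N\,\mathrm{Im}(\tau)}{|c\tau+d|^2}$ is in hand, since it isolates exactly the quantity $|c\tau+d|$ that must be of size $\sqrt N$. One should also note that the entries need not be coprime and $f$ need not be a bijection of $\mathbb H$, but neither fact is used: the identity for $\mathrm{Im}(f(\tau))$ holds for any real matrix of positive determinant $N$, and that is all we invoke.
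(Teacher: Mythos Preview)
Your proof is correct and follows essentially the same route as the paper: both use the identity $\mathrm{Im}(f(\tau))=\frac{N\,\mathrm{Im}(\tau)}{|c\tau+d|^2}$ to bound $|c\tau+d|$ by a constant times $\sqrt N$, deduce the bounds on $|c|$ and $|d|$ from the positivity of $\mathrm{Im}(\tau)$, and then obtain the bounds on $|a|$ and $|b|$ via $|a\tau+b|=|f(\tau)|\,|c\tau+d|$. Your write-up is in fact more explicit than the paper's sketch, which leaves the passage from $|c\tau+d|\le\lambda\sqrt N$ to the individual bounds to the reader.
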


\begin{proof}
We keep the notations of the lemma. Then, for any $\tau\in \mathbb H$, we have 
$$\mathrm{Im} f(\tau)=\frac{N\mathrm{Im}(\tau)}{|c\tau+d|^2}.$$
As a consequence, choosing $\varepsilon_0$ small enough, the inequality 
$$\mathrm{Im}(\tau)\geq \inf_{\tau\in K} \mathrm{Im}(\tau)>0$$
implies $|c\tau+d|\leq \lambda\sqrt{N}$ for some constant $\lambda$ depending only on $\tau$ and $\varepsilon_0$. Since the imaginary part of $\tau$ is positive, this implies the required estimate on $c$, and consequently on $d$. Furthermore, since $|c\tau+d|\leq \lambda \sqrt{N}$, we can find a constant $\mu$ depending only on $\tau$ and $\varepsilon$ such that $|a\tau+b|\leq \mu\sqrt{N}$ as well, which provides the required estimates for $a$ and $b$ as well.
\end{proof}

\begin{lemma}\label{lemma:close-to-fixed-point}
Let us keep the notations of the previous lemma and assume $f\neq \mathrm{Id}_\mathbb{H}$. Then there exists positive constants $\varepsilon_1$, $K_1$ depending only on $\tau$, such that 
$$|\tau-f(\tau)|\leq \frac{\varepsilon_1}{N}\implies \exists\,\tau_0\in \mathbb H,\,f(\tau_0)=\tau_0 \, \mathrm{and}\,  |\tau-\tau_0|\leq K_1\sqrt{N}|\tau-f(\tau)|.$$
\end{lemma}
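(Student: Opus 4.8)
\textbf{Proof proposal for Lemma \ref{lemma:close-to-fixed-point}.}

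The plan is to analyze the Möbius transformation $f(z)=\frac{az+b}{cz+d}$ directly and show that when $\tau$ is nearly fixed by $f$, there is an honest fixed point nearby. First I would treat the degenerate case $c=0$ separately: then $f(z)=\frac{a}{d}z+\frac{b}{d}$ is affine with $ad=N$, and either $a=d$ (so $f$ is a translation by $b/d\neq 0$, hence $|\tau-f(\tau)|=|b/d|=|b|/|d|\geq 1/|d|\gtrsim 1/\sqrt N$ by Lemma \ref{lemma:bounding-coefficients}, contradicting the hypothesis once $\varepsilon_1$ is small) or $a\neq d$, in which case $f$ has the genuine fixed point $\tau_0=\frac{b}{d-a}$; one computes $\tau-f(\tau)=\frac{(d-a)\tau - b}{d}=\frac{d-a}{d}(\tau-\tau_0)$, so $|\tau-\tau_0|=\frac{|d|}{|d-a|}|\tau-f(\tau)|\leq |d|\,|\tau-f(\tau)|\leq K_0\sqrt N|\tau-f(\tau)|$, which is even stronger than claimed (and one must check $\tau_0\in\mathbb H$, which holds because $f$ maps $\mathbb H$ to $\mathbb H$ and fixes $\tau_0$, so $\tau_0$ cannot be real unless $f=\mathrm{Id}$).

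For the main case $c\neq 0$, write $F(z)=cz^2+(d-a)z-b$, so that $\tau-f(\tau)=\frac{-F(\tau)}{c\tau+d}$ and the fixed points of $f$ are exactly the roots $\tau_0,\tau_0'$ of $F$. By Lemma \ref{lemma:bounding-coefficients}, $|a|,|b|,|c|,|d|\leq K_0\sqrt N$ on the compact set $K$ (applied with $\tau$ and $f(\tau)$ in a fixed compact neighborhood of the given point, which the hypothesis guarantees once $\varepsilon_1$ is small since $|\tau-f(\tau)|\leq\varepsilon_1/N$ keeps $f(\tau)$ close to $\tau$), hence $|c\tau+d|\leq \lambda\sqrt N$. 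Therefore $|F(\tau)|=|c\tau+d|\,|\tau-f(\tau)|\leq \lambda\sqrt N\,|\tau-f(\tau)|$. Writing $F(\tau)=c(\tau-\tau_0)(\tau-\tau_0')$ gives $|\tau-\tau_0|\,|\tau-\tau_0'|\leq \frac{\lambda\sqrt N}{|c|}|\tau-f(\tau)|$, and since $c$ is a nonzero integer, $|c|\geq 1$, so $\min(|\tau-\tau_0|,|\tau-\tau_0'|)^2\leq \lambda\sqrt N\,|\tau-f(\tau)|$. Relabeling so that $\tau_0$ is the nearer root, $|\tau-\tau_0|\leq \lambda^{1/2}N^{1/4}|\tau-f(\tau)|^{1/2}$. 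Combined with $|\tau-f(\tau)|\leq\varepsilon_1/N$, this yields $|\tau-\tau_0|\leq \lambda^{1/2}\varepsilon_1^{1/2}N^{1/4}N^{-1/2}\cdot\big(|\tau-f(\tau)|^{1/2}\big)$ — but I actually want a clean linear bound in $|\tau-f(\tau)|$, so I instead bootstrap: from $|\tau-\tau_0|\leq \lambda^{1/2}N^{1/4}|\tau-f(\tau)|^{1/2}$ and the hypothesis, once $\varepsilon_1$ is small enough $\tau_0$ lies in a slightly larger fixed compact subset of $\mathbb H$, and in particular $|\tau-\tau_0'|$ is bounded \emph{below} by a positive constant (the two fixed points cannot both be near $\tau$, else $F$ would be small on a whole disc, forcing $f=\mathrm{Id}$; concretely $|\tau_0-\tau_0'|=\frac{\sqrt{|(d-a)^2+4bc|}}{|c|}$, and if this is $\leq 2|\tau-\tau_0|$ it is also small, which together with $|c|\geq 1$ bounds the integer discriminant $(d-a)^2+4bc$, leaving only finitely many possibilities each handled by shrinking $\varepsilon_1$). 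Granting $|\tau-\tau_0'|\geq c_0>0$, the inequality $|\tau-\tau_0|\,|\tau-\tau_0'|\leq \lambda\sqrt N|\tau-f(\tau)|$ gives directly $|\tau-\tau_0|\leq \frac{\lambda}{c_0}\sqrt N\,|\tau-f(\tau)|$, which is the assertion with $K_1=\lambda/c_0$.

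The main obstacle is the bookkeeping around the second fixed point $\tau_0'$: one must rule out the possibility that $\tau$ is simultaneously close to both roots of $F$, since that is precisely the situation in which the factorization argument degenerates and also the situation closest to $f=\mathrm{Id}$. I expect this to be handled by observing that the discriminant $(d-a)^2+4bc$ is a rational integer, so either it vanishes (double fixed point — a parabolic $f\neq\mathrm{Id}$, which as in the affine case forces $|\tau-f(\tau)|$ to be bounded below, contradiction for small $\varepsilon_1$) or $|\tau_0-\tau_0'|^2 = \frac{|(d-a)^2+4bc|}{|c|^2}\geq \frac{1}{|c|^2}\geq \frac{1}{K_0^2 N}$, and then $\min(|\tau-\tau_0|,|\tau-\tau_0'|)\leq \lambda^{1/2}N^{1/4}|\tau-f(\tau)|^{1/2}\leq \lambda^{1/2}N^{1/4}(\varepsilon_1/N)^{1/2} = \lambda^{1/2}\varepsilon_1^{1/2}N^{-1/4}$, which is $\ll N^{-1/2}\lesssim |\tau_0-\tau_0'|$ for $\varepsilon_1$ small, so the far root is at distance $\gtrsim |\tau_0-\tau_0'|\gtrsim N^{-1/2}$ from $\tau$; plugging this lower bound $|\tau-\tau_0'|\gtrsim N^{-1/2}$ back into $|\tau-\tau_0|\,|\tau-\tau_0'|\leq \lambda\sqrt N|\tau-f(\tau)|$ produces $|\tau-\tau_0|\lesssim N\,|\tau-f(\tau)|$ — a bound of the claimed shape but with $N$ rather than $\sqrt N$. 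To recover the stated $\sqrt N$ one should be more careful and use that $\tau$ is confined to a \emph{fixed} compact set so that the relevant fixed point $\tau_0$ is also confined there, making $|\tau-\tau_0'|$ bounded below by an absolute constant rather than by $N^{-1/2}$; I would carry out that refinement, which does not change the structure of the argument.
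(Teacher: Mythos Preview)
Your approach follows the paper's in outline, but there is a concrete error and you are missing the one observation that makes the $c\neq 0$ case clean.

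In the $c=0$, $a\neq d$ subcase, the fixed point $\tau_0=\frac{b}{d-a}$ is a \emph{real} number, so it does not lie in $\mathbb H$; your argument that ``$f$ maps $\mathbb H$ to $\mathbb H$ and fixes $\tau_0$, so $\tau_0$ cannot be real'' is false (an affine map $z\mapsto \frac{a}{d}z+\frac{b}{d}$ with $a/d>0$ preserves $\mathbb H$ and has a real fixed point). The paper instead disposes of the whole $c=0$ case by the same lower bound you used for $a=d$: since $\mathrm{Im}\big((d-a)\tau-b\big)=(d-a)\mathrm{Im}(\tau)$ and $|d|\leq K_0\sqrt N$, one gets $|\tau-f(\tau)|\geq \mathrm{Im}(\tau)/(K_0\sqrt N)>\varepsilon_1/N$ for small $\varepsilon_1$, so this case simply does not occur.

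For $c\neq 0$, the step you struggle with --- bounding $|\tau-\tau_0'|$ below by a constant rather than by $N^{-1/2}$ --- is immediate once you remember that $a,b,c,d$ are \emph{real}. Hence the two fixed points of $f$ are either both real or complex conjugates. The paper checks (from $|\tau-f(\tau)|=\frac{|c|\,|\tau-\tau_0|\,|\tau-\tau_0'|}{|c\tau+d|}$, $|c|\geq 1$, $|c\tau+d|\lesssim\sqrt N$) that if the fixed points were real then $|\tau-f(\tau)|$ would be bounded below, contradicting the hypothesis for small $\varepsilon_1$. So $\tau_0'=\overline{\tau_0}$ with $\tau_0\in\mathbb H$, and then $|\tau-\tau_0'|=|\tau-\overline{\tau_0}|\geq \mathrm{Im}(\tau)+\mathrm{Im}(\tau_0)\geq \mathrm{Im}(\tau)$ is an absolute constant. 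Plugging this into the factorization gives $|\tau-\tau_0|\leq \frac{|c\tau+d|}{|c|\,\mathrm{Im}(\tau)}|\tau-f(\tau)|\leq K_1\sqrt N\,|\tau-f(\tau)|$ directly, with no bootstrap needed.
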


\begin{proof}
Choose $\varepsilon <\varepsilon_0$. We leave it to the reader to show that if $c=0$ and $\varepsilon_1$ is chosen small enough, then there is no $\tau\in\mathbb H$ satisfying 
$$|\tau-f(\tau)|\leq \frac{\varepsilon_1}{N}.$$

We now assume $c\neq 0$. In that case $f$ has two fixed points in $\C$, $\tau_0$ and $\tau'_0$. We can write
$$|\tau-f(\tau)|=\frac{|c||\tau-\tau_0||\tau-\overline{\tau_0}|}{|c\tau+d|}.$$
Lemma \ref{lemma:bounding-coefficients} proves that $|c|$ and $|d|$ are bounded above by $K_0\sqrt{N}$ for some constant $K_0$ depending only on $\tau$, it is readily shown that -- once again choosing $\varepsilon_1$ to be small enough, the existence of $\tau\in \mathbb H$ with 
$$|\tau-f(\tau)|\leq \frac{\varepsilon_1}{N}$$
implies that $\tau_0$ and $\tau'_0$ are complex conjugates and are not real numbers. We can assume that $\tau_0$ is in $\mathbb H$. As a consequence, we have $|\tau-\tau_0|\geq \mathrm{Im}(\tau)$. Putting these estimates together shows the result.
\end{proof}

\begin{proposition}\label{proposition:finding-CM-archimedean-1}
Let $y$ be an element of $\C\subset X(1)(\C)$. Then there exists a positive constants $C, \varepsilon$ such that for any integer $N>1$ and any $\alpha\in T_{N*}y$ such that 
$$|y-\alpha|\leq \frac{\varepsilon}{N},$$
 there exists a $CM$ elliptic curve $E_0$ over $\C$ with a cyclic self-isogeny of degree $N$ such that 
$$|y-j(E_0)|\leq C\sqrt{N}|y-\alpha|.$$
\end{proposition}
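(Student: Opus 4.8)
The plan is to run the same strategy as in the non-archimedean case (Proposition~\ref{proposition:finding-CM-p-adic}), translating the lifting argument into a statement about the action of $\mathrm{SL}_2(\Z)$ on $\mathbb{H}$. Fix $y\in\C$ and a point $\tau\in\mathbb{H}$ with $j(\tau)=y$; since $j$ is a local biholomorphism away from the elliptic points, I would first arrange that $\tau$ lies in a fixed compact set $K\subset\mathbb{H}$ and that the size of $|y-\alpha|$ is comparable, up to constants depending only on $K$, to $|\tau-\tau'|$ where $\tau'$ is a suitable preimage of $\alpha$. (Near the two elliptic points $j=0,1728$ one has to be slightly careful: there $j$ is ramified, but these are the $j$-invariants of CM curves and the conclusion is then trivial by taking $E_0$ to be $y$ itself, so I may assume $y$ stays away from them.)

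Next, recall that a point $\alpha\in T_{N*}y$ is represented by some $f(\tau)$ where $f(z)=\frac{az+b}{cz+d}$ with $ad-bc=N$ and the matrix $\binom{a\ b}{c\ d}$ running through representatives of $\Gamma(1)\backslash M_N(\Z)$ (primitive matrices of determinant $N$), so that $j(f(\tau))=\alpha$ and $E_\alpha$ is the quotient of $E_\tau$ by the corresponding cyclic $N$-subgroup. The hypothesis $|y-\alpha|\leq\varepsilon/N$ translates, using the comparison above, into $|\tau-f(\tau)|\leq \varepsilon_1/N$ for a suitable $\varepsilon_1$. Now apply Lemma~\ref{lemma:close-to-fixed-point}: there is a fixed point $\tau_0\in\mathbb{H}$ of $f$ with $|\tau-\tau_0|\leq K_1\sqrt{N}\,|\tau-f(\tau)|$. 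The point $E_0$ with $j(E_0)=j(\tau_0)$ is the CM elliptic curve I want: since $f$ fixes $\tau_0$, the matrix $\binom{a\ b}{c\ d}$ (of determinant $N$) acts on the lattice $\Z+\Z\tau_0$ as an endomorphism of degree $N$ that is not $\pm$ the identity (because $f\neq\mathrm{Id}$, using that $N>1$ and $N$ need not be a perfect square — actually one should check that $f$ having a fixed point in $\mathbb{H}$ forces $E_{\tau_0}$ to have CM, which is exactly the statement that $c\neq 0$ and the fixed points are non-real complex conjugates as established in the proof of Lemma~\ref{lemma:close-to-fixed-point}). This endomorphism, being primitive of degree $N$, is a cyclic self-isogeny of degree $N$.

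Finally I would transport the estimate $|\tau-\tau_0|\leq K_1\sqrt{N}|\tau-f(\tau)|$ back through $j$: since $\tau$ and $\tau_0$ both lie in a fixed compact region (one must check $\tau_0$ stays compact, which follows from $|\tau-\tau_0|$ being small once $\varepsilon$ is small), and $j$ has bounded derivative there, $|y-j(E_0)|=|j(\tau)-j(\tau_0)|\leq C'|\tau-\tau_0|\leq C'K_1\sqrt{N}|\tau-f(\tau)|\leq C\sqrt{N}|y-\alpha|$, after absorbing the comparison constant between $|\tau-f(\tau)|$ and $|y-\alpha|$ into $C$. This gives the claimed inequality with $C,\varepsilon$ depending only on $y$ (through the choice of compact neighborhood of $\tau$).

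The main obstacle is the bookkeeping around the ramification of $j$ and the elliptic points: one needs the comparison $|y-\alpha|\asymp|\tau-\tau'|$ to hold uniformly, and near $j=0$ or $j=1728$ the map $j$ is $3$-to-$1$ or $2$-to-$1$ ramified, which would spoil the constants. The clean way around this, as noted, is to dispose of the case where $y$ is near an elliptic point separately (those are themselves CM points, so the statement is easy there by taking $E_0$ essentially equal to the curve with invariant $y$), and otherwise fix once and for all a compact neighborhood of a preimage $\tau$ of $y$ on which $j$ is a biholomorphism with bounded and bounded-away-from-zero derivative; then all constants depend only on $y$. The secondary point to be careful about is ensuring that the fixed point $\tau_0$ produced by Lemma~\ref{lemma:close-to-fixed-point} actually gives a \emph{cyclic} self-isogeny of degree exactly $N$ — this is where one uses that the matrix representing $f$ is primitive of determinant $N$, so the induced endomorphism of $\Z+\Z\tau_0$ has cyclic cokernel of order $N$.
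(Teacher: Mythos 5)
Your proposal is correct and follows essentially the same route as the paper's proof: pass to $\mathbb H$ via a bi-Lipschitz comparison between $|y-\alpha|$ and $|\tau-\tau'|$ near a non-ramified preimage $\tau$ of $y$, represent $\alpha$ by a primitive homography $f$ of determinant $N$, invoke Lemma~\ref{lemma:close-to-fixed-point} to produce a fixed point $\tau_0$ at distance $O(\sqrt{N}\,|\tau-f(\tau)|)$, observe that primitivity of the matrix makes $j(\tau_0)$ the invariant of a curve with a \emph{cyclic} self-isogeny of degree $N$, and transport the distance estimate back through $j$. One small inaccuracy in your side remarks: when $y$ is itself one of the elliptic $j$-values $0$ or $1728$, you cannot simply take $E_0$ to be the curve with invariant $y$, since that curve need not admit a cyclic self-isogeny of degree $N$ for an arbitrary $N$; the cleaner observation is that this case never arises in the application, because Proposition~\ref{proposition:worstapprox} already assumes $y$ is not a CM $j$-invariant (and indeed the paper's own proof silently makes the same non-ramification assumption).
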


\begin{proof}
Let $\tau$ be an element of $\mathbb H$ with $j(\tau)=y$. Let $\varepsilon_2, K_2$ be positive real numbers such that for any $z\in \C\subset X(1)(\C)$ with $|z-y|\leq \varepsilon_2$, we can find $\tau'$ with $j(\tau')=z$ and 
$$K_2^{-1}|y-z|\geq |\tau-\tau'|\geq K_2|y-z|.$$

The preimage of $T_{N*}y$ by $j$ exactly the set of element $\frac{a\tau+b}{c\tau+d}\in \mathbb H,$ with 
\begin{equation}\label{equation:condition-on-homography}
\begin{pmatrix}
a & b\\
c & d
\end{pmatrix}
\in \mathrm{SL_2}(\Z)
\begin{pmatrix}
\alpha & \beta\\
0 & \delta
\end{pmatrix},
\end{equation}
where $\alpha, \beta$ and $\delta$ are three integers with no common factor and $\alpha\delta=N$, $\alpha\geq 1$, $0\leq \beta<\delta$.

Let us consider $\alpha\in T_{N*}y$ with $|y-\alpha|\leq \varepsilon_2$. We can write $\alpha=j(\frac{a\tau+b}{c\tau+d})$ with $a,b,c,d$ as in (\ref{equation:condition-on-homography}) and $|\tau -\frac{a\tau+b}{c\tau+d}|\leq K_2^{-1}|y-\alpha|$. Now choose $\varepsilon_1$ as in Lemma \ref{lemma:close-to-fixed-point} and assume that $|y-\alpha|\leq K_2\frac{\varepsilon}{N}$. We can find $\tau_0\in \mathbb H$ such that 
\begin{equation}\label{equation:tau0-is-CM}
\tau_0=\frac{a\tau_0+b}{c\tau_0+d}
\end{equation}
and $|\tau-\tau_0|\leq K_3\sqrt{N}|y-\alpha|$, where $K_3$ is a positive constant depending only on $y$. Since $a,b,c, d$ are chosen as in (\ref{equation:condition-on-homography}), (\ref{equation:tau0-is-CM}) shows that $j(\tau_0)$ is the $j$-invariant of an elliptic curve $E_0$ with a cyclic self-isogeny of degree $N$. Now writing 
$$|y-j(E_0)|\leq K_2^{-1}|\tau-\tau_0|\leq K_2^{-1}K_3\sqrt{N}|y-\alpha|$$
concludes the proof.
\end{proof}

As in the non-archimedean case, we get the following.

\begin{corollary}\label{corollary:finding-CM-archimedean-2}
Let $y$ and $z$ be two elements of $\C\subset X(1)(\C)$. Then there exist positive constants $C', \varepsilon'$ such that for any two distinct positive integers $N_1, N_2>1$, and any $(\alpha, \beta)\in T_{N_1*}y\times T_{N_2*}y$ such that 
$$\max(|\alpha-z|, |\beta-z|)\leq \frac{\varepsilon'}{N_1N_2},$$
there exists a $CM$ elliptic curve $E_0$ over $\C$ with a cyclic self-isogeny of degree at most $N_1N_2$ such that 
$$|y-j(E_0)|\leq C'\sqrt{N_1N_2}|\alpha-\beta|.$$
\end{corollary}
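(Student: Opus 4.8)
The statement is a direct two-variable version of Proposition~\ref{proposition:finding-CM-archimedean-1}, so the plan is to reduce Corollary~\ref{corollary:finding-CM-archimedean-2} to that proposition together with the triangle inequality and an estimate relating $|\alpha-\beta|$ to the distance of $y$ from a CM point. First I would set $C,\varepsilon$ to be the constants produced by Proposition~\ref{proposition:finding-CM-archimedean-1} applied at the point $y$. Given distinct $N_1,N_2>1$ and $(\alpha,\beta)\in T_{N_1*}y\times T_{N_2*}y$ with $\max(|\alpha-z|,|\beta-z|)\le \varepsilon'/(N_1N_2)$, the triangle inequality gives $|\alpha-\beta|\le 2\varepsilon'/(N_1N_2)$. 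The key observation is that $\alpha$ lies in the Hecke orbit $T_{N_1*}y$ and $\beta$ in $T_{N_2*}y$, hence $\beta$ lies in the Hecke orbit $T_{N_1 N_2 *}$ of \emph{some} point of $T_{N_2*}y$ — more precisely, since both $\alpha$ and $\beta$ are obtained from $y$ by cyclic isogenies of degrees $N_1$ and $N_2$, the pair $(\alpha,\beta)$ is related by a composite correspondence of degree $N_1N_2$; concretely, $\beta\in T_{N_1N_2*}\alpha$ after composing the isogeny $E_\beta\to E$ of degree $N_2$ with $E\to E_\alpha$ of degree $N_1$ (up to the usual ambiguity of turning this into a \emph{cyclic} isogeny, which only decreases the degree).

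The main technical point is therefore to run the homography argument of Proposition~\ref{proposition:finding-CM-archimedean-1} with $y$ replaced by $\alpha$ (or symmetrically by $\beta$) and with the single Hecke orbit replaced by the composite one of degree $\le N_1N_2$. Explicitly: pick $\tau$ with $j(\tau)=y$, write $\alpha=j(g_1\tau)$ and $\beta=j(g_2\tau)$ for suitable integral matrices $g_1,g_2$ of determinants $N_1,N_2$ coming from the description \eqref{equation:condition-on-homography}; then $g_1 g_2^{-1}$ (suitably integralized) realizes $\alpha\mapsto \beta$ as a homography of determinant dividing $N_1N_2$. Applying Lemma~\ref{lemma:bounding-coefficients} and Lemma~\ref{lemma:close-to-fixed-point} to this homography — whose coefficients are $O(\sqrt{N_1N_2})$ on the relevant compact set, once $\varepsilon'$ is chosen small enough that $\tau$ and $g_1\tau$, $g_2\tau$ all stay in a fixed compact $K\subset\mathbb H$ — produces a fixed point $\tau_0\in\mathbb H$ of the homography with $|\tau_1-\tau_0|\le K\sqrt{N_1N_2}\,|\tau_1-\tau_2|$, where $\tau_1=g_1\tau$, $\tau_2=g_2\tau$. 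By the description \eqref{equation:condition-on-homography}, $j(\tau_0)$ is the $j$-invariant of a CM elliptic curve $E_0$ admitting a cyclic self-isogeny of degree at most $N_1N_2$ (the degree of the homography after clearing the gcd of its entries). Translating back through the bi-Lipschitz comparison between $|\cdot|$ on $X(1)(\C)$ near $y$ and the hyperbolic metric near $\tau$ — valid since $|y-j(E_0)|$ is small — yields $|y-j(E_0)|\le C'\sqrt{N_1N_2}\,|\alpha-\beta|$ for a constant $C'$ depending only on $y$ and $z$ (the dependence on $z$ entering only through how small $\varepsilon'$ must be taken to guarantee everything happens near $y$, which follows from $|\alpha-z|,|\beta-z|$ being tiny and $\alpha,\beta$ thus being close to $z$, hence we must separately allow $y$ to be close to $z$; if $|y-z|$ is bounded below the whole statement is vacuous for small $\varepsilon'$, so one reduces to $y$ near $z$).

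The step I expect to be the main obstacle is the bookkeeping around cyclic isogenies and degrees: composing a cyclic $N_2$-isogeny with a cyclic $N_1$-isogeny does not in general give a \emph{cyclic} $N_1N_2$-isogeny, so one must track the largest cyclic quotient, and correspondingly the homography $g_1g_2^{-1}$ must be cleared of the gcd of its integer entries before reading off the self-isogeny degree of $E_0$. This is exactly the kind of elementary-but-fiddly matrix manipulation that the paper elsewhere relegates to the reader, and I would do the same: state that after normalization $g_1g_2^{-1}$ has integer entries with gcd $1$ and determinant a divisor of $N_1N_2$, so $E_0$ carries a cyclic self-isogeny of degree at most $N_1N_2$, and note that all the size estimates $O(\sqrt{N_1N_2})$ survive this normalization. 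Everything else is a direct transcription of the proof of Proposition~\ref{proposition:finding-CM-archimedean-1}, applied at $\alpha$ instead of $y$, using that $\alpha$ is within $O(1/(N_1N_2))$ of $z$ and hence of $y$ up to the a priori bound on $|y-z|$.
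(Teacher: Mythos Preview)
Your reduction has a genuine gap: the fixed point you produce lies near $z$, not near $y$. You write $\alpha=j(g_1\tau)$, $\beta=j(g_2\tau)$ and apply Lemma~\ref{lemma:close-to-fixed-point} to the homography $g_1g_2^{-1}$ at the point $\tau_1=g_1\tau$. This yields $\tau_0$ with $|\tau_1-\tau_0|\le K\sqrt{N_1N_2}\,|\tau_1-\tau_2|$, hence $|j(\tau_0)-\alpha|$ small; since $\alpha$ is near $z$, you have found a CM point near $z$. Nothing in your argument gives $|y-j(E_0)|$ small, and your concluding parenthetical---``if $|y-z|$ is bounded below the whole statement is vacuous''---is simply false: the hypothesis constrains $|\alpha-z|$ and $|\beta-z|$, not $|y-z|$, and Hecke orbits of $y$ of large level certainly approach $z$ regardless of where $y$ sits. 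The main case of the corollary is precisely $y$ far from $z$.

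The paper closes this gap by transporting back to $y$ \emph{before} invoking Proposition~\ref{proposition:finding-CM-archimedean-1}. Since $\alpha\in T_{N_1*}y$ one has $y\in T_{N_1*}\alpha$; write $y=j\!\left(\frac{a\tau_\alpha+b}{c\tau_\alpha+d}\right)$ and set $\beta'=j\!\left(\frac{a\tau_\beta+b}{c\tau_\beta+d}\right)\in T_{N_1*}\beta$. Because elements of $GL_2^+(\R)$ act by hyperbolic isometries, the hyperbolic distance from $\beta'$'s lift to $y$'s lift equals that from $\tau_\alpha$ to $\tau_\beta$, so $|y-\beta'|\le K_3|\alpha-\beta|$. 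As $N_1\neq N_2$, the point $\beta'$ lies in $T_{N*}y$ for some $1<N\le N_1N_2$, and now Proposition~\ref{proposition:finding-CM-archimedean-1} applied at $y$ gives the CM point near $y$. Your argument can be repaired along the same lines (conjugate your homography by $g_1$ to obtain $g_2^{-1}g_1$, whose fixed point $g_1^{-1}\tau_0$ lies near $\tau$), but this transport step is the missing idea, not bookkeeping.
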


\begin{proof}
Since $\alpha$ belongs to $T_{N_1*}y$, $y$ belongs to $T_{N_1*}\alpha$. Furthermore, since $N_1$ and $N_2$ are distinct, the elements of $T_{N_1*}\beta$ are all elements of some $T_{N*}y$ for some positive integer $N$ with $1<N\leq N_1N_2$.

As a consequence of Proposition \ref{proposition:finding-CM-archimedean-1}, it is enough to show that, if $|\alpha-z$ and $|\beta-z|$ are smaller than a constant depending only on $y$ and $z$, there exists $\beta'\in T_{N_1*}\beta$ such that $|y-\beta'|\leq K_3|\alpha-\beta|$, where $K_3$ is a positive constant depending only on $y$ and $z$.

Since $y$ is the Hecke orbit $T_{N_1*}\alpha$, we can write $y=j(\frac{a\tau_{\alpha}+b}{c\tau_\alpha+d})$, where $\tau_{\alpha}$ is an element of $\mathbb H$ with $j(\tau_\alpha)=\alpha$ and $a,b,c,d$ are as in (\ref{equation:condition-on-homography}) -- $N$ being replaced by $N_1$ of course. Now we can find $\tau_{\beta}\in \mathbb H$ with $j(\tau_\beta)=\beta$ and $|\tau_\alpha-\tau_\beta|\leq K_4 |\alpha-\beta|$ for some positive constant $K_4$ depending only on $z$. 

Since homographies preserve the hyperbolic distance on $\mathbb H$, the hyperbolic distance between $y$ and $\frac{a\tau_\beta+b}{c\tau_{\beta}+d}$ is equal to that between $\alpha$ and $\beta$. Writing $\beta'=j(\frac{a\tau_\beta+b}{c\tau_{\beta}+d})$ and noting that the hyperbolic distance on $\mathbb H$ and the usual distance on $\C\subset X(1)(\C)$ are equivalent via $j$ on neighborhoods of $\tau$ and $j(\tau)=y$, this shows the inequality 
$$|y-\beta'|\leq K_3|\alpha-\beta|$$
where $K_3$ depends only on $y$ and $z$. By construction, $\beta'$ belongs to $T_{N_1*}\beta$, which allows us to conclude.
\end{proof}

The following easy result shows that $CM$ point cannot be too close to one another.

\begin{proposition}\label{proposition:CM-far-apart-archimedean}
Let $K$ be a compact subset of $\C\subset X(1)(\C)$. Let $M_1, M_2>1$ be two integers, and let $E_1, E_2$ be two $CM$ elliptic curves over $\C$ with cyclic self-isogenies of degree $M_1$ and $M_2$ respectively. Assume that $j(E_1)$ and $j(E_2)$ belong to $K$. If $E_1$ and $E_2$ are not isomorphic, then 
$$|j(E_1)-j(E_2)|\geq C(M_1M_2)^{-1/2}(\sqrt{M_1}+\sqrt{M_2})^{-1},$$
where $C$ is a positive constant depending only on the compact set $K$. 
\end{proposition}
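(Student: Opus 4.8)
\textbf{Proof plan for Proposition \ref{proposition:CM-far-apart-archimedean}.}

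The plan is to mimic exactly the structure of the non-archimedean Proposition \ref{proposition:CM-far-apart-p-adic}: express each $E_i$ via a CM point $\tau_i \in \mathbb H$, use the cyclic self-isogeny of degree $M_i$ to pin down $\tau_i$ as a fixed point of a homography with small integer entries, and then observe that two distinct such homographies cannot have fixed points that are too close. First I would pick $\tau_1, \tau_2 \in \mathbb H$ with $j(\tau_i) = j(E_i)$ lying in a fixed compact set $K' \subset \mathbb H$ mapping onto $K$ (possible since $j$ is a local homeomorphism and $K$ is compact). The cyclic self-isogeny of degree $M_i$ means $\tau_i = \frac{a_i \tau_i + b_i}{c_i \tau_i + d_i}$ for a matrix $\begin{pmatrix} a_i & b_i \\ c_i & d_i \end{pmatrix}$ conjugate in $\mathrm{SL}_2(\Z)$ to an upper-triangular matrix of determinant $M_i$, as in $(\ref{equation:condition-on-homography})$. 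By Lemma \ref{lemma:bounding-coefficients}, since $\tau_i$ lies in a fixed compact subset of $\mathbb H$, we get $\mathrm{Max}(|a_i|, |b_i|, |c_i|, |d_i|) \leq K_0 \sqrt{M_i}$ for a constant $K_0$ depending only on $K$.

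The core of the argument is then a lower bound for the distance between distinct fixed points of such homographies. Let $g_i(z) = \frac{a_i z + b_i}{c_i z + d_i}$. Since $E_1 \not\cong E_2$, the points $\tau_1$ and $\tau_2$ are in distinct $\mathrm{SL}_2(\Z)$-orbits, so in particular $g_1 \neq g_2$ as elements of $\mathrm{PGL}_2$; moreover $g_1(\tau_2) \neq \tau_2$ (otherwise $\tau_2$ would be a CM point with endomorphism ring containing the relevant order, forcing $j(E_2)$ to coincide with the canonical lift — and in any case $\tau_1, \tau_2$ would be $\mathrm{SL}_2(\Z)$-equivalent). Now I would estimate
$$|\tau_2 - g_1(\tau_2)| = \frac{|c_1|\,|\tau_2 - \tau_1|\,|\tau_2 - \overline{\tau_1}|}{|c_1 \tau_2 + d_1|},$$
using that $\tau_1, \overline{\tau_1}$ are the two fixed points of $g_1$ (the case $c_1 = 0$ is handled separately and easily: then $g_1$ is affine with a unique finite fixed point, and a direct estimate applies). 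On the other hand, $\tau_2 - g_1(\tau_2)$ is a nonzero rational function of $\tau_2$ whose numerator has integer coefficients of size $O(\sqrt{M_1})$ and whose value is therefore at least $c(K') / (\text{denominator})$ times the reciprocal of a polynomial in the coefficients — more precisely, writing $\tau_2 - g_1(\tau_2) = \frac{c_1 \tau_2^2 + (d_1 - a_1)\tau_2 - b_1}{c_1 \tau_2 + d_1}$, the numerator is a nonzero element whose absolute value I can bound below only after comparing $g_1$ and $g_2$. The cleanest route: since $\tau_2$ is the fixed point of $g_2$, i.e. $c_2 \tau_2^2 + (d_2 - a_2)\tau_2 - b_2 = 0$, the two quadratics $c_1 X^2 + (d_1 - a_1)X - b_1$ and $c_2 X^2 + (d_2 - a_2)X - b_2$ are not proportional (else $g_1, g_2$ share both fixed points, hence $\tau_1$ and $\tau_2$ are $\mathrm{SL}_2(\Z)$-equivalent). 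Their resultant is a nonzero integer, hence has absolute value $\geq 1$; expanding the resultant and bounding the cofactors via Lemma \ref{lemma:bounding-coefficients} gives $|\tau_1 - \tau_2| \geq c \cdot (\text{product of the small coefficients})^{-1} \geq c' (M_1 M_2)^{-1/2}(\sqrt{M_1} + \sqrt{M_2})^{-1}$, where the extra factor $(\sqrt{M_1}+\sqrt{M_2})^{-1}$ comes from the leading-coefficient terms in the resultant expansion. Transporting this back to a lower bound for $|j(E_1) - j(E_2)|$ via the Lipschitz equivalence of $j$ and the identity on the compact set $K'$ finishes the proof.

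\textbf{Main obstacle.} The delicate point is organizing the algebra so that the correct power of $M_1 M_2$ appears: one must track that the fixed-point quadratics have coefficients of size $O(\sqrt{M_i})$, that their resultant is a nonzero rational integer (here one uses $E_1 \not\cong E_2$, i.e. non-$\mathrm{SL}_2(\Z)$-equivalence of $\tau_1, \tau_2$, to guarantee non-proportionality of the quadratics — this is the one genuinely arithmetic input), and that bounding the resultant from above, together with separating the contributions of leading versus lower coefficients, yields exactly the stated exponent. The degenerate cases $c_1 = 0$ or $c_2 = 0$, and ruling out real fixed points (which do not arise because the homographies fix points of $\mathbb H$), need to be dispatched but are routine given Lemma \ref{lemma:bounding-coefficients}.
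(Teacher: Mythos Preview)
Your approach is correct and reaches the goal, but the key algebraic step differs from the paper's. Both arguments begin identically: lift $j(E_i)$ to $\tau_i$ in a fixed compact $K'\subset\mathbb H$, write $\tau_i$ as a fixed point of a homography with integer matrix of determinant $M_i$, and invoke Lemma~\ref{lemma:bounding-coefficients} to bound the entries by $O(\sqrt{M_i})$ (and to see $c_i\neq 0$, since otherwise the fixed point would be real). From there the paper proceeds by writing out the quadratic formula explicitly,
\[
\tau_i=\frac{(a_i-d_i)+\sqrt{t_i^2-4M_i}}{2c_i},\qquad t_i=a_i+d_i,
\]
and comparing $\tau_1-\tau_2$ directly, using the elementary inequality $|\sqrt a-\sqrt b|\ge (\sqrt a+\sqrt b)^{-1}$ for distinct positive integers $a,b$ to separate the imaginary parts. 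You instead take the resultant of the two integer quadratics $P_i(X)=c_iX^2+(d_i-a_i)X-b_i$: since their roots are the conjugate pairs $\{\tau_i,\bar\tau_i\}$ and $\tau_1\neq\tau_2$, the polynomials share no root (for real quadratics with non-real roots, sharing one root forces proportionality), so $\mathrm{Res}(P_1,P_2)$ is a nonzero integer, whence
\[
1\le|\mathrm{Res}(P_1,P_2)|=c_1^2c_2^2\,|\tau_1-\tau_2|^2\,|\tau_1-\bar\tau_2|^2,
\]
and bounding $|c_i|\le K_0\sqrt{M_i}$ together with $|\tau_1-\bar\tau_2|\le \mathrm{diam}(K'\cup\overline{K'})$ already yields $|\tau_1-\tau_2|\ge C(M_1M_2)^{-1/2}$, which is in fact stronger than the stated bound. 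Your route is a bit more systematic and avoids the case analysis implicit in the $\sqrt a-\sqrt b$ computation; the paper's route is more hands-on but makes the arithmetic of the CM points completely explicit. Two small cleanups: your justification ``hence $\tau_1,\tau_2$ are $\mathrm{SL}_2(\Z)$-equivalent'' should simply read ``hence $\tau_1=\tau_2$'', and the aside about $g_1(\tau_2)\neq\tau_2$ via ``canonical lift'' is both out of place (that is a characteristic~$p$ notion) and unnecessary for the resultant argument.
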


\begin{proof}
As before, and since we are working over a compact set, we only have to prove that if $K'$ is any compact subset of $\mathbb H$, then for any $\tau_1, \tau_2\in \mathbb K$ such that $j(\tau_i)=E_i$ for $i=1,2$, we have 
$$|\tau_1-\tau_2|\geq D(M_1M_2)^{-1/2}(\sqrt{M_1}+\sqrt{M_2})^{-1}$$
for some positive constant $D$ depending only on the compact set $K$. 

Since $E_1$ has a cycle self-isogeny of degree $M_1$, we have
$$\tau_1=\frac{a_1\tau_1+b_1}{c_1\tau_1+d_1},$$
where the matrix 
$$\begin{pmatrix}
a_1 & b_1\\
c_1 & d_1
\end{pmatrix}\in \mathrm{M_2(\Z)}$$ 
has determinant $M_1$ and is not a homothety. In particular, $c_1\neq 0$. By Lemma \ref{lemma:bounding-coefficients}, $|c_1|$ is bounded above by $K_1\sqrt{N}$, where the positive constant $K_1$ only depends on the compact set $K'$. Now we can write
$$\tau_1=\frac{\lambda_1+\mu_1\sqrt{t^2-4M_1}}{2c_1},$$
where $t=a_1+d_1$ and $\lambda_1, \mu_1$ are two integers. Note that $t^2<4M_1$. Writing $\tau_2$ in the same fashion and computing $\tau_1-\tau_2$ yields the result -- noting that $|\sqrt{a}-\sqrt{b}|\geq \frac{1}{\sqrt{a}+\sqrt{b}}$ for any two distinct positive integers $a$ and $b$.
\end{proof}

Putting the estimates of Corollary \ref{corollary:finding-CM-archimedean-2} and Proposition \ref{proposition:CM-far-apart-archimedean} together, we obtain the following statement, the proof of which we leave to the reader, as it is completely analogous to that of Proposition \ref{proposition:bad-approximation-p-adic}

\begin{proposition}\label{proposition:bad-approximation-archimedean}
Let $D$ be a large enough positive integer. For any $y, z\in \C\subset X(1)(\C)$, define
$$S_{y, z}=\{N\in\mathbb N | \exists\alpha\in T_{N*}y, |\alpha-z|\leq N^{-D}\}.$$
Assume that $y$ has nonnegative valuation and is not the $j$-invariant of a CM elliptic curve. Then $S_{y, z}$ has density zero, i.e. 
$$\lim_{n\ra\infty} \frac{1}{n}|S_{y,z}|=0.$$
\end{proposition}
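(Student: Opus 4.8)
The statement in question is Proposition \ref{proposition:bad-approximation-archimedean}, whose proof the author explicitly leaves to the reader as ``completely analogous to that of Proposition \ref{proposition:bad-approximation-p-adic}''. My plan is to follow exactly the skeleton of the non-archimedean argument, substituting the archimedean inputs Corollary \ref{corollary:finding-CM-archimedean-2} and Proposition \ref{proposition:CM-far-apart-archimedean} for their $p$-adic counterparts Proposition \ref{proposition:finding-CM-p-adic} and Proposition \ref{proposition:CM-far-apart-p-adic}. Fix $y, z \in \C \subset X(1)(\C)$ with $y$ not the $j$-invariant of a CM elliptic curve, and fix $n$ large (the exponent $D$ to be chosen afterwards, independently of $n$). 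The goal is to show $\frac{1}{n}|S_{y,z} \cap \{1,\dots,n\}|$ is arbitrarily small once $D$ is large and $n \to \infty$.

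\textbf{Key steps.} First I would observe that if $S_{y,z}$ contains no integer in the range $[\sqrt n, n]$ we are immediately done, since then $|S_{y,z}\cap\{1,\dots,n\}| \le \sqrt n$. So assume there exist $\sqrt n \le N_1 \le n$ and $\sqrt n \le N_2 \le n$, with $N_1 \ne N_2$ (enlarging the range slightly if necessary to guarantee two distinct elements, or splitting off the $O(1)$ case where $S_{y,z}\cap[\sqrt n,n]$ is a single point), together with $\alpha \in T_{N_1*}y$, $\beta \in T_{N_2*}y$ such that $|\alpha - z| \le N_1^{-D} \le n^{-D/2}$ and $|\beta - z| \le N_2^{-D} \le n^{-D/2}$. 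For $n$ large these are well below the threshold $\varepsilon'/(N_1N_2)$ of Corollary \ref{corollary:finding-CM-archimedean-2}, since $N_1N_2 \le n^2$ while the approximations are of size $n^{-D/2}$ with $D$ large; hence $|\alpha-\beta|\le 2n^{-D/2}$ and the corollary produces a CM elliptic curve $E_{CM}$ with a cyclic self-isogeny of degree at most $N_1N_2 \le n^2$ satisfying $|y - j(E_{CM})| \le C'\sqrt{N_1N_2}\,|\alpha-\beta| \le K n^{1-D/2} \le K n^{-D/4}$ for $n$ large and a constant $K$ independent of $y,z,n$. Taking $D$ large enough, Proposition \ref{proposition:CM-far-apart-archimedean} (applied on a fixed compact neighbourhood of $y$, which contains $j(E_{CM})$ since $|y - j(E_{CM})| \le Kn^{-D/4} \to 0$) forces this $E_{CM}$ to be \emph{unique}: any two CM curves with self-isogenies of degree at most $n^2$ both within $Kn^{-D/4}$ of $y$ would violate the lower bound $C(M_1M_2)^{-1/2}(\sqrt{M_1}+\sqrt{M_2})^{-1} \ge C n^{-3/2}$ once $Kn^{-D/4} < \frac{1}{2}Cn^{-3/2}$, i.e. once $D > 6$.

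\textbf{Conclusion of the argument.} Next, for any $N \in S_{y,z}$ with $\sqrt n \le N \le n$ pick $\alpha_N \in T_{N*}y$ with $|\alpha_N - z| \le n^{-D/2}$; using $|y - j(E_{CM})| \le Kn^{-D/4}$ and the Lipschitz behaviour of Hecke correspondences on a fixed compact set (as in Proposition \ref{proposition:finding-CM-archimedean-1} / Corollary \ref{corollary:finding-CM-archimedean-2}, transporting the $N$-th Hecke orbit along a small perturbation of the base point) one finds $\alpha'_{CM} \in T_{N*}j(E_{CM})$ with $|\alpha'_{CM} - z| \le K' n^{-D/24}$, say, for a constant $K'$ independent of $y,z,n$ and with a possibly cruder exponent. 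This $\alpha'_{CM}$ is the $j$-invariant of a CM curve $E'_{CM}$ with a cyclic isogeny $E_{CM} \to E'_{CM}$ of degree $N \le n$, so $E'_{CM}$ admits a cyclic self-isogeny of degree at most $n^2$; enlarging $D$ once more, Proposition \ref{proposition:CM-far-apart-archimedean} shows $E'_{CM}$ is uniquely determined (it is the unique such CM curve within $K'n^{-D/24}$ of $z$). Therefore every $N \in S_{y,z} \cap [\sqrt n, n]$ is realized as the degree of some morphism $E_{CM} \to E'_{CM}$, so by Proposition \ref{proposition:low-density}, with $\delta_{CM}$ the discriminant of $\mathrm{End}(E_{CM})$,
\begin{equation}
|S_{y,z}\cap\{1,\dots,n\}| \le \sqrt n + 1 + 8\sqrt n + \frac{16 n}{\sqrt{\delta_{CM}}} = 1 + 9\sqrt n + \frac{16 n}{\sqrt{\delta_{CM}}}.
\end{equation}
Finally, since $y$ is not a CM $j$-invariant while $|y - j(E_{CM})| \to 0$, the curve $E_{CM}$ must take infinitely many values as $n$ grows, and there are only finitely many CM elliptic curves over $\C$ of bounded discriminant, so $\delta_{CM} \to \infty$. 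Dividing by $n$ and letting $n \to \infty$ gives $\frac{1}{n}|S_{y,z}\cap\{1,\dots,n\}| \to 0$, which is the claim.

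\textbf{Main obstacle.} The routine but genuinely delicate point is the ``perturbation'' step producing $\alpha'_{CM} \in T_{N*}j(E_{CM})$ close to $z$ from $\alpha_N \in T_{N*}y$ close to $z$: one must track how a point of the $N$-th Hecke orbit moves when the base point is moved by $Kn^{-D/4}$, and show the displacement is at most polynomial in $n$ times the displacement of the base point, uniformly over a compact set. This is handled exactly as in Corollary \ref{corollary:finding-CM-archimedean-2} via Lemma \ref{lemma:bounding-coefficients} (the matrix entries of the relevant homographies are $O(\sqrt N)$ on a compact set) and the equivalence of the hyperbolic and Euclidean metrics under $j$ near $\tau$; the factor $\sqrt N \le \sqrt n$ lost at this stage is harmless because the exponent $D$ can be taken as large as we please. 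Everything else is bookkeeping of constants, all chosen independently of $y$, $z$ and $n$, exactly paralleling the proof of Proposition \ref{proposition:bad-approximation-p-adic}.
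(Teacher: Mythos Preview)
Your proposal is correct and follows exactly the approach the paper intends, transporting the proof of Proposition \ref{proposition:bad-approximation-p-adic} to the archimedean setting via Corollary \ref{corollary:finding-CM-archimedean-2} and Proposition \ref{proposition:CM-far-apart-archimedean} in place of their $p$-adic analogues. One minor numerical slip: with $M_1,M_2\le n^2$ the separation bound from Proposition \ref{proposition:CM-far-apart-archimedean} is $\gtrsim n^{-3}$ rather than $n^{-3/2}$, so uniqueness of $E_{CM}$ needs $D>12$ rather than $D>6$ --- harmless, since $D$ is taken as large as needed.
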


Unwinding the estimates above, it can be checked that the statement above is effective. The reader will be able to verify that one can take $D=4$, though it is by no means an optimal choice.

\bibliographystyle{alpha}
\bibliography{isogenies}

\end{document}